 \newtheorem{thm}{Theorem}[section]
 \newtheorem{lem}[thm]{Lemma}
 \newtheorem{prop}[thm]{Proposition}
 \theoremstyle{definition}
 \newtheorem{defn}[thm]{Definition}
 \theoremstyle{remark}
 \newtheorem{rem}[thm]{Remark}
 \numberwithin{equation}{section}
\newcommand{\trho}{\rho_s }
\newcommand{\dis}{\displaystyle}
\newcommand{\divv}{\text{\rm div}}
\newcommand{\R}{\mathbb R}
\newcommand{\sgn}{\text{\rm sign}}
\newcommand{\intox}{\int_{\R^3}}
\newcommand{\intoxt}{\int_0^T\int_{\R^3}}
\newcommand{\intoxst}{\int_0^t\int_{\R^3}}
\newcommand{\supp}{\text{\rm supp }}
\newcommand{\intoxts}{\int_0^{t_2}\intox}
\numberwithin{equation}{section}
\numberwithin{theorem}{section}
\numberwithin{figure}{section}
\begin{document}

%
%
%
%
%
%
%
%
%








\title[Compressible flow with potential force]
 {Existence, stability and long time behaviour of weak solutions of the three-dimensional compressible Navier-Stokes equations with potential force}
 
\author{Anthony Suen} 

\address{Department of Mathematics and Information Technology\\
The Education University of Hong Kong}

\email{acksuen@eduhk.hk}

\date{\today}

\keywords{Navier-Stokes equations; compressible flow; potential force; stability; long time behaviour}

\subjclass[2000]{35Q30} 

\begin{abstract}
We address the global-in-time existence, stability and long time behaviour of weak solutions of the three-dimensional compressible Navier-Stokes equations with potential force. We show the details of the $\alpha$-dependence of different smoothing rates for weak solutions near $t=0$ under the assumption on the initial velocity $u_0$ that $u_0\in H^\alpha$ for $\alpha\in(\frac{1}{2},1]$ and obtain long time convergence of weak solutions in various norms. We then make use of the Lagrangean framework in comparing the instantaneous states of corresponding fluid particles in two different solutions. The present work provides qualitative results on the long time behaviour of weak solutions and how the weak solutions depend continuously on initial data and steady states.
\end{abstract}

\maketitle
\section{Introduction}

\subsection{Background and motivation}

We are interested in the 3-D compressible Navier-Stokes equations with an external potential force in the whole space $\R^3$ ($j=1,2,3$):
\begin{align}\label{NS} 
\left\{ \begin{array}{l}
\rho_t + \divv (\rho u) =0, \\
(\rho u^j)_t + \divv (\rho u^j u) + (P)_{x_j} = \mu\,\Delta u^j +
\lambda \, (\divv \,u)_{x_j}  + \rho f^{j}.
\end{array}\right.
\end{align}
Here $x\in\R^3$ is the spatial coordinate and $t\ge0$ stands for the time. The unknown functions $\rho=\rho(x,t)$ and $u=(u^1,u^2,u^3)(x,t)$ represent the density and velocity vector in a compressible fluid. The function $P=P(\rho)$ denotes the pressure, $f=(f^1(x),f^2(x),f^3(x))$ is a prescribed time-independent external force and $\mu$, $\lambda$ are positive viscosity constants which satisfy
\begin{align}\label{assumption on viscosity}
\mu,\lambda>0,\qquad \frac{\mu}{\lambda}>\frac{4}{5}.
\end{align}
The system \eqref{NS} is equipped with initial condition
\begin{equation}\label{IC}
(\rho(\cdot,0)-\trho,u(\cdot,0)) = (\rho_0-\trho,u_0),
\end{equation}
where the non-constant time-independent function $\rho_s =\rho_s (x)$ (known as the {\it steady state solution} to \eqref{NS}) can be obtained formally by taking $u\equiv0$ in \eqref{NS}:
\begin{align}\label{steady state}
\nabla P(\rho_s (x)) =\rho_s (x)f(x).
\end{align}

The well-posedness problem of the Navier-Stokes system \eqref{NS} is an important but challenging research topic in fluid mechanics, and we now give a brief review on the related results. The local-in-time existence of classical solution to the full Navier-Stokes equations was proved by Nash \cite{nash} and Tani \cite{tani}, and some Serrin type blow-up criteria for smooth solutions was recently obtained by Suen \cite{suen20c}. Later, Matsumura and Nishida \cite{mn1} obtained the global-in-time existence of $H^3$ solutions when the initial data was taken to be small with respect to $H^3$ norm, the results were then generalised by Danchin \cite{danchin} who showed the global existence of solutions in critical spaces. In the case of large initial data, Lions \cite{lions} obtained the existence of global-in-time finite energy weak solutions, yet the problem of uniqueness for those weak solutions remains completely open. In between the two types of solutions as mentioned above, a type of ``intermediate weak'' solutions were first suggested by Hoff in \cite{hoff95, hoff02, hoff05, hoff06} and later generalised by Li and Matsumura in \cite{LM11}, Matsumura and Yamagata in \cite{MY01}, Suen in \cite{suen13b, suen14, suen16} and other systems which include compressible magnetohydrodynamics (MHD) \cite{suenhoff12, suen12, suen20b} and compressible Navier-Stokes-Poisson system \cite{suen20a}. Solutions as obtained in this intermediate class are less regular than those small-smooth type solutions obtained by Matsumura and Nishida \cite{mn1} and Danchin \cite{danchin}, which contain possible codimension-one discontinuities in density, pressure and velocity gradient. Nevertheless, those intermediate weak solutions would be more regular than the large-weak type solutions developed by Lions \cite{lions}, hence the uniqueness and continuous dependence of solutions may be possibly obtained; see \cite{hoff06} and the compressible MHD \cite{suen20b}. 

In this present work, we address solutions of \eqref{NS} from the class of intermediate weak solutions as mentioned above. Our results give a more qualitative description on the $\alpha$-dependence of different smoothing rates for weak solutions when the initial velocity $u_0$ is in $H^\alpha$ for $\alpha\in(\frac{1}{2},1]$, which include those previous results obtained in \cite{LM11, suen13b, suen14, suen16}. In particular, we remove the smallness assumption on $\|\rho_0-\rho_s\|_{L^\infty}$ which was required in \cite{LM11} for proving global-in-time weak solutions to \eqref{NS}. Moreover, our results extend those of Cheung and Suen \cite{suen16}, which proved uniqueness of weak solution to \eqref{NS} with H\"{o}lder continuous density function $\rho$. Such condition on $\rho$ is too strong in the sense that this would exclude solutions with codimension-one singularities, which are physically interesting features for the weak solutions; see \cite{hoff02} for a detailed discussion on the propagation of singularities. The main novelties of this current work can be summarised as follows:

\noindent{\bf 1.} We strengthen the previous results obtained in \cite{LM11, suen13b, suen14, suen16}, in the way that we show the details of the $\alpha$-dependence of different smoothing rates for weak solutions near $t=0$ under the assumption that $u_0\in H^\alpha$ for $\alpha\in(\frac{1}{2},1]$; see Theorem~\ref{existence of weak sol thm}. Such regularity requirement on $u_0$ is crucial in obtaining uniqueness of the weak solutions of \eqref{NS}. It also matches with the results given in Hoff \cite{hoff02} for Navier-Stokes equations.

\noindent{\bf 2.} We provide detailed descriptions on the long time behaviour of weak solution to \eqref{NS}, including the {\it long time averages} of the ``smooth part'' of velocity $u$ and the {\it effective viscous flux}; see \eqref{long time 1}-\eqref{long time 3} in Theorem~\ref{existence of weak sol thm}. Our results are consistent with the optimal decay rates of weak solutions to \eqref{NS} which are recently obtained by Hu and Wu in \cite{HW20}.

\noindent{\bf 3.} We successfully extend the uniqueness and continuous dependence theory given in \cite{hoff06} for compressible Navier-Stokes system to \eqref{NS}, and we generalise the condition on pressure $P$ which can accommodate more general cases; see \eqref{more general condition on P} in Remark~\ref{remark on pressure}.

\noindent{\bf 4.} We remove the unnecessary H\"{o}lder continuity restriction on density from \cite{suen16} for proving uniqueness, which allows us to include a larger class of weak solutions.

\subsection{Ideas and strategies}

We give a brief description on the idea and strategies behind our work. First of all, we introduce two important functions, namely the {\it effective viscous flux} $F$ and {\it vorticity} $\omega$, which are defined by
\begin{equation}\label{def of F and omega}
\rho_s  F=(\mu+\lambda)\divv(u)-(P(\rho)-P(\rho_s )),\qquad\omega=\omega^{j,k}=u^j_{x_k}-u^k_{x_j}.
\end{equation}
By the definitions of $F$ and $\omega$, and together with \eqref{NS}$_2$, $F$ and $\omega$ satisfy the elliptic equations
\begin{align}\label{elliptic eqn for F}
\Delta (\rho_s  F)=\divv(\rho \dot{u}-\rho f+\nabla P(\rho_s )),
\end{align}
\begin{align}\label{elliptic eqn for omega}
\mu\Delta\omega=\nabla\times(\rho \dot{u}-\rho f+\nabla P(\rho_s )).
\end{align}
The functions $F$ and $\omega$ play essential roles for studying intermediate weak solutions to compressible flows, see \cite{hoff95, hoff02, suenhoff12, suen20b} for some related discussions. 

One of the key step in the derivation of {\it a priori} estimates on approximate solutions is to control the space-time integral of $|\nabla u|^4$. Due to the lack of regularity when $u_0\in L^2$, it is expected that $u(\cdot,t)\notin H^2$ and we need some other methods for controlling $\nabla u$ in $L^4$. Using the decomposition that
\begin{align*}
\Delta u^j=\divv(u_{x_j})+\omega^{j,k}_{x_k},
\end{align*}  
we have from the definition of $F$ that
\begin{align*}
(\mu+\lambda)\Delta u^j=(\rho_sF)_{x_j}+(\mu+\lambda)\omega^{j,k}_{x_k}+(P-P_s)_{x_j}.
\end{align*}
Hence in view of the above, we can obtain the desired estimate for $\nabla u$ in $L^4$ in terms of $F$, $\omega$ and $\rho-\rho_s$. In the small-time regime, we automatically have uniform-in-time estimate on $\dis\int_0^t\|(\rho-\rho_s)(\cdot,\tau)\|_{L^4}^4d\tau$ provided that $\|(\rho-\rho_s)(\cdot,\tau)\|_{L^2}<\infty$ and $\|(\rho-\rho_s)(\cdot,\tau)\|_{L^\infty}<\infty$ for all $\tau\ge0$. In the long-time regime, however, it is not obvious to obtain uniform-in-time estimate on $\dis\int_1^t\|(\rho-\rho_s)(\cdot,\tau)\|_{L^4}^4d\tau$ when $t>1$ without any smallness assumption on $\|(\rho-\rho_s)(\cdot,\tau)\|_{L^\infty}$. As inspired by the work \cite{LM11}, if we consider the auxiliary functional $\mathcal{A}(t)$ given by
\begin{align*}
\mathcal{A}(t)=\int_1^t\intox|\rho-\rho_s|^\frac{10}{3}dxd\tau,
\end{align*}
together with the mass equation \eqref{NS}$_1$, then we can show that $\mathcal{A}(t)$ is indeed bounded uniformly in time in terms of $F$ and $u$ {\it under the assumption} that $\rho$ is essentially bounded above and below. Such estimate on $\mathcal{A}(t)$ will be used for giving uniform-in-time bound on $\dis\int_1^t\|(\rho-\rho_s)(\cdot,\tau)\|_{L^4}^4d\tau$, which is important for obtaining global-in-time {\it a priori} estimates on the approximate smooth solutions to \eqref{NS}. The uniform-in-time estimate on $\mathcal{A}(t)$ is also useful for studying the long time behaviour of $\|F(\cdot,t)\|_{L^p}$ for $p\in[\frac{10}{3},6]$. The details will be carried out in Section~\ref{prelim section} and Section~\ref{proof of existence section}.

To address the uniqueness of weak solutions with respect to initial data, we need to have a better control on $\dis\int_0^t\|\nabla u(\cdot,\tau)\|_{L^\infty}d\tau$. Our attempt is to decompose $u$ as $u=u_{F}+u_{P}$, where $u_{F}$ and $u_{P}$ satisfy
\begin{align*}
\left\{
 \begin{array}{lr}
(\mu+\lambda)\Delta (u_{F})^{j}=(\rho_sF)_{x_j} +(\mu+\lambda)(\omega)^{j,k}_{x_k}\\
(\mu+\lambda)\Delta (u_P)^{j}=(P-P_s)_{x_j}.\\
\end{array}
\right.
\end{align*}
Using the {\it a priori} bounds on the effective viscous flux $F$, we can bound the integral $\dis\int_0^t\|\nabla u_F(\cdot,\tau)\|_{L^\infty}d\tau$ in terms of $F$. On the other hand, to bound the integral $\dis\int_{0}^{t}||\nabla u_{P}(\cdot,\tau)||_{L^\infty}d\tau$, we point out that $(\rho-\rho_s)\in L^2\cap L^\infty$ is {\it not} sufficient for bounding $||\nabla u_{P}(\cdot,\tau)||_{L^\infty}$. One way to overcome such difficulty is to assume that $P(\rho(\cdot,t))$ is H\"{o}lder continuous, yet such assumption would be too strong and exclude solutions with codimension-one singularities. On the other hand, we notice that if $P(\rho(\cdot,t))\in L^\infty$, then $u^j_P(\cdot,t)=(\mu+\lambda)^{-1}\Gamma_{x_j}*(P(\rho(\cdot,t))-P_s(\cdot))$ is log-Lipschitz. If we assume that the initial density is {\it piecewise H\"{o}lder continuous} (see Definition~\ref{definition of piecewise C beta}), then using the mass equation \eqref{NS}$_1$ and invoking the definition of $F$, it implies that the density is also piecewise H\"{o}lder continuous for positive time. Hence with such improved regularity on the density, it allows us to obtain the desired bound on $\dis\int_{0}^{t}||\nabla u_{P}(\cdot,\tau)||_{\infty}d\tau$. As a by-product, we further address the regularity and long-time behaviour of the ``smooth part'' $u_F$ of $u$. The details will be carried out in Section~\ref{proof of existence section}.

Once we obtain the global-in-time existence of weak solutions, we proceed to address the continuous dependence on initial data of weak solutions of the Navier-Stokes equations \eqref{NS}. As suggested by Hoff in \cite{hoff06}, weak solutions with minimal regularity are best compared in a Lagrangian framework. Instead of writing the equations \eqref{NS} in Lagrangian coordinates, we consider the {\it particle trajectories} $X(y,t,s)$ with respect to a weak solution $(\rho,u,f,\rho_s)$ to \eqref{NS}:
\begin{align*}
\left\{ \begin{array}
{lr} \dis\frac{\partial X}{\partial t}(y,t,t')
=u(X(y,t,t'),t)\\ X(y,t',t')=y.
\end{array} \right.
\end{align*}
If $(\bar\rho,\bar{u},\bar{f},\bar{\rho}_s)$ is another weak solution to \eqref{NS} with corresponding particle trajectories $\bar{X}(y,t,s)$, then we aim at estimating the difference $u-\bar{u}\circ S$, where $\bar{u}\circ S:=\bar{u}(S(x,t),t)$ and the mapping $S$ is given by
\begin{align}\label{def of S intro}
S(x,t):=\bar{X}(X(x,0,t),t,0).
\end{align}
The mapping $S$ works well with {\it convective differentiation}, in the sense that $\bar{u}\circ S$ and $\bar{u}$ enjoy the relation
\begin{align*}
(\bar{u}_t+\nabla\bar{u}\bar{u})\circ S=(\bar{u}\circ S)_t+\nabla(\bar{u}\circ S) u,
\end{align*}
hence one can obtain bounds for $u-\bar{u}\circ S$ from estimates for solutions of the adjoint of the weak equation satisfied by $u-\bar{u}\circ S$. Once we know the estimates on $u-\bar{u}\circ S$, the difference $u-\bar{u}$ can then be controlled by $u-\bar{u}\circ S$ if we have $\dis\int_0^t\|\nabla \bar{u}(\cdot,\tau)\|_{L^\infty}d\tau<\infty$, which can be achieved if the initial density $\bar{\rho}_0$ is piecewise H\"{o}lder continuous, and together with the fact that
\begin{align*}
\intox|x-S(x,t)|^2dx\le Ct\int_0^t\intox|u-\bar{u}\circ S|^2dxd\tau.
\end{align*}
The remaining details will be carried out in Section~\ref{proof of main thm section}.

It is also worth mentioning that the steady state solutions $\rho_s$ has a crucial role in addressing the stability of weak solutions to \eqref{NS} (see \eqref{bound on difference} in Theorem~\ref{Main thm}). This is not surprising: the steady state solutions $\rho_s$ mainly depend on the external force $f$ as appeared in \eqref{NS}$_2$, hence $\rho_s$ should be controlled by $f$ in some appropriate ways. In our present work, however, we choose to list out the effects of $\rho_s$ and $f$ in separate ways, which can give a clearer picture on how the stability of weak solutions depends on $\rho_s$ and $f$.

\subsection{Notations and conventions}
 
We introduce the following notations used in our work. For any $r\in(1,\infty]$ and $k\ge0$, we define the following function spaces:
\begin{align*} 
\left\{ \begin{array}{l}
L^r=L^r(\R^3), D^{k,r}=\{u\in L^1_{loc}(\R^3):\|\nabla^k u\|_{L^r}<\infty\},\|u\|_{D^{k,r}}:=\|\nabla^k u\|_{L^r}\\
W^{k,r}=L^r\cap D^{k,r}, H^k=W^{k,2}.
\end{array}\right.
\end{align*}
We adapt the following usual notations for H\"older seminorms: Given $m\ge1$, for $v:\R^3\to \R^m$ and $\alpha \in (0,1]$, 
\hfill
$$\langle v\rangle^\alpha = \sup_{{x_1,x_2\in 
\R^3}\atop{x_1\not=x_2}}
{{|v(x_2) -v(x_1)|}\over{|x_2-x_1|^\alpha}}\,;$$
and for $v:Q\subseteq\R^3 \times[0,\infty)\to \R^m$ and $\alpha_1,\alpha_2 \in (0,1]$,
\hfill
$$\langle v\rangle^{\alpha_1,\alpha_2}_{Q} = \sup_{{(x_1,t_1),(x_2,t_2)\in 
Q}\atop{(x_1,t_1)\not=(x_2,t_2)}}
{{|v(x_2,t_2) - v(x_1,t_1)|}\over{|x_2-x_1|^{\alpha_1} + |t_2-t_1|^{\alpha_2}}}\,.$$
We introduce the usual {\it convective derivative} $\dis\frac{D}{Dt}$ with respect to a velocity field $u$ as follows. For a given function $w:\R^3\times(0,T)\to\R$, we define
\begin{align}
\frac{D}{Dt}(w)=\dot{w}:=w_t+u\cdot\nabla w,
\end{align}
where $\dis w_t:=\frac{\partial w}{\partial t}$ and $\nabla w$ is the gradient of $w$. For $w:\R^3\times(0,T)\to\R^3$, we define
\begin{align}
\frac{D}{Dt}(w)=\dot{w}:=w_t+\nabla w u,
\end{align}
where $\nabla w$ is the $3 \times 3$ matrix of partial derivatives of $w$. We also write $\dis (\cdot)_{x_j}:=\frac{\partial}{\partial x_j}$ for simplicity.

We give the notion of {\it piecewise H\"{o}lder continuous} as follows (also refer to \cite{hoff02} for more details):

\begin{defn}\label{definition of piecewise C beta}
We say that a function $\phi(\cdot,\tau)$ is piecewise $C^{\beta(t)}$ if it has simple discontinuities across a $C^{1+\beta(t)}$ curve $\mathcal{C}(t):\mathcal{C}(t)=\{y(s,t):s\in I\subset\R\}$, where $\beta(t)>0$ is a function in $t$, $I$ is an open interval and the curve $\mathcal{C}(t)$ is the $u$-transport of $\mathcal C(0)$ given by:
$$y(s,t)=y(s,0)+\int_0^t u(y(s,\tau),\tau)d\tau.$$
Here $\mathcal{C}(0)$ is a $C^{\beta_0}$ curve with $\beta(0)=\beta_0>0$, which means that
\begin{equation*}
\mathcal{C}(0)=\{y_0(s):s\in\R\},
\end{equation*}
where $y(s,0)=y_0(s)$ is parameterised in arc length $s$ and $y_0$ is $C^{\beta_0}$.
\end{defn}

\begin{rem}
In view of Definition~\ref{definition of piecewise C beta}, if $\rho_0$ is piecewise continuous, then it may have simple jump discontinuities across $\mathcal C(0)$. Hence $\rho_0$ may contain codimension-one singularities in $\R^3$.
\end{rem}

\subsection{Steady state solutions}

We define $\rho_s $ as mentioned at the beginning of this section. For the pressure function $P=P(\rho)$ and the external force $f$, we assume that
\begin{align}\label{condition on P}
\mbox{$P(\rho)\in C^2((0,\infty))$ with $P(0)=0$; $P(\rho)>0$, $P'(\rho)>0$ for $\rho>0$;}
\end{align}
\begin{align}\label{condition on f}
\mbox{There exists $\Psi\in H^2$ such that $f=\nabla\Psi$ and $\Psi(x)\to 0$ as $|x|\to\infty$.}
\end{align}
Given a constant density $\rho_{\infty}>0$, we say that $(\rho_s ,0)$ is a steady state solution to \eqref{NS} if $\rho_s \in C^2(\R^3)$ and the following holds
\begin{align}
\label{eqn for steady state} \left\{ \begin{array}{l}
\nabla P(\rho_s (x)) =\rho_s (x)\nabla\Psi(x), \\
\lim\limits_{|x|\rightarrow\infty}\rho_s (x) = \rho_{\infty}.
\end{array}\right.
\end{align}
Hence $\rho_s$ satisfies the equation
\begin{align*} 
\int_{\rho_\infty}^{\rho_s}\rho^{-1} P'(\rho)d\rho=\Psi(x).
\end{align*}
To avoid vacuum state for $\rho_s$, we further assume
\begin{align}\label{condition for rho s}
-\int_0^{\rho_\infty}\frac{P'(\rho)}{\rho} d\rho<\inf_{x\in\R^3} \Psi(x)\le\sup_{x\in\R^3} \Psi(x)<\int_{\rho_\infty}^\infty \frac{P'(\rho)}{\rho} d\rho.
\end{align}
We recall the following result about the existence and uniqueness of $\rho_s$ (see for example \cite{LM11}):
\begin{prop}
Assume that $P$ and $\Psi\in H^3$ satisfy \eqref{condition on P} and \eqref{condition for rho s} respectively. Then there exists a unique solution $\rho_s$ of \eqref{eqn for steady state} satisfying $\rho_s-\rho_\infty\in H^2\cap W^{2,6}$. Moreover, there exist positive constants $\bar\rho$ and $\underline{\rho}$ depending on $\|\Psi\|_{H^3}$ such that
\begin{align}\label{bounds on rho s}
\underline{\rho}<\inf_{x\in\R^3} \rho_s(x)\le \sup_{x\in\R^3} \rho_s(x)<\bar{\rho}.
\end{align}
\end{prop}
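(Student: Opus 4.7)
The plan is to convert the steady-state ODE into an algebraic equation by introducing a primitive of $P'(\rho)/\rho$, then invert it pointwise, and finally read off the regularity and the uniform bounds from the chain rule and Sobolev embedding.

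First, I would define
\[
G(\rho) := \int_{\rho_\infty}^{\rho}\frac{P'(s)}{s}\,ds,\qquad \rho>0.
\]
By \eqref{condition on P}, $G'(\rho)=P'(\rho)/\rho>0$, so $G:(0,\infty)\to(a_-,a_+)$ is a $C^2$ diffeomorphism, where $a_-=-\int_0^{\rho_\infty}P'(\rho)/\rho\,d\rho$ and $a_+=\int_{\rho_\infty}^\infty P'(\rho)/\rho\,d\rho$. The first equation of \eqref{eqn for steady state} rewrites as $\nabla\bigl(G(\rho_s)-\Psi\bigr)=0$, and the far-field conditions $\rho_s\to\rho_\infty$ and $\Psi\to 0$ at infinity (combined with $G(\rho_\infty)=0$) force the constant of integration to vanish. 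Hence any solution must satisfy $G(\rho_s(x))=\Psi(x)$ pointwise, and conversely this gives a solution.

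Next, uniqueness and the bounds \eqref{bounds on rho s} follow immediately. The assumption \eqref{condition for rho s} says precisely that $[\inf\Psi,\sup\Psi]\Subset(a_-,a_+)$, so $G^{-1}$ is well defined on a compact subinterval of $(a_-,a_+)$ containing the image of $\Psi$. Setting $\rho_s := G^{-1}\circ\Psi$ gives a unique function satisfying
\[
\underline\rho := G^{-1}\bigl(\inf_{x}\Psi(x)\bigr)\;<\;\rho_s(x)\;<\;G^{-1}\bigl(\sup_{x}\Psi(x)\bigr) =: \bar\rho,
\]
and since $\Psi\in H^3\hookrightarrow L^\infty$ controls $\|\Psi\|_{L^\infty}$ in terms of $\|\Psi\|_{H^3}$, both $\underline\rho$ and $\bar\rho$ depend only on $\|\Psi\|_{H^3}$ (and on $P$).

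For the regularity claim $\rho_s-\rho_\infty\in H^2\cap W^{2,6}$, I would use the chain rule. Since $G^{-1}$ is $C^2$ on $[\inf\Psi,\sup\Psi]$ with uniformly bounded derivatives there, and since $\rho_s-\rho_\infty = G^{-1}(\Psi)-G^{-1}(0)$, we get the pointwise identities
\[
\nabla\rho_s = (G^{-1})'(\Psi)\,\nabla\Psi,\qquad
\nabla^2\rho_s = (G^{-1})'(\Psi)\,\nabla^2\Psi + (G^{-1})''(\Psi)\,\nabla\Psi\otimes\nabla\Psi .
\]
The linear part inherits the regularity of $\Psi$ directly: $\nabla\Psi\in H^2$ and $\nabla^2\Psi\in H^1\hookrightarrow L^2\cap L^6$ in $\R^3$. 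For the quadratic term, the Sobolev embeddings $H^2\hookrightarrow L^\infty$ and $H^1\hookrightarrow L^2\cap L^6$ in $\R^3$ give $\nabla\Psi\in L^2\cap L^\infty$, hence by interpolation $\nabla\Psi\in L^4\cap L^{12}$, so $|\nabla\Psi|^2\in L^2\cap L^6$. Combining these with the $L^\infty$ bound on $\Psi$ and on $(G^{-1})',(G^{-1})''$ evaluated along $\Psi$, we obtain $\nabla^2\rho_s\in L^2\cap L^6$. A similar (easier) argument with $\nabla\rho_s = (G^{-1})'(\Psi)\nabla\Psi\in L^2$ and $\rho_s-\rho_\infty = G^{-1}(\Psi)-G^{-1}(0) = O(\Psi)$ in $L^2$ completes $\rho_s-\rho_\infty\in H^2\cap W^{2,6}$.

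The only mildly delicate point is the quadratic term $\nabla\Psi\otimes\nabla\Psi$, which forces us to push the regularity of $\Psi$ slightly beyond $H^2$: it is the reason the proposition assumes $\Psi\in H^3$ rather than merely $\Psi\in H^2$, and it is where the interpolation into $L^{12}$ is needed to land in $W^{2,6}$.
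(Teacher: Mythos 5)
Your proposal is correct and follows essentially the same route as the paper, which reduces \eqref{eqn for steady state} to the algebraic identity $\int_{\rho_\infty}^{\rho_s}\rho^{-1}P'(\rho)\,d\rho=\Psi(x)$ and refers to \cite{LM11} for the details; your $G(\rho_s)=\Psi$ is exactly that reduction, and the inversion, $L^\infty$ bounds via $H^2\hookrightarrow L^\infty$, and chain-rule regularity argument are the standard ones. (One cosmetic point: to get the strict inequalities in \eqref{bounds on rho s} when $\inf\Psi$ or $\sup\Psi$ is attained, simply take $\underline{\rho}$ slightly smaller than $G^{-1}(\inf\Psi)$ and $\bar\rho$ slightly larger than $G^{-1}(\sup\Psi)$.)
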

From now on, for the sake of simplicity, we also write $P=P(\rho)$ and $P_s=P(\rho_s)$ unless otherwise specified.

\begin{rem}
It is clear that the pressure function $P$ satisfying \eqref{condition on P} includes the typical poly-tropic model $P=a\rho^\gamma$ for $a>0$ and $\gamma>0$.
\end{rem}

\subsection{Weak solutions}

Weak solutions to the system \eqref{NS} can be defined as follows. We say that $(\rho,u,f,\rho_s)$ on $\R^3\times[0,T]$ is a {\it weak solution} of \eqref{NS} if the following conditions hold:
\begin{equation}\label{condition on rho s}
\mbox{$\rho_s$ is a steady state solution to \eqref{eqn for steady state} which satisfies \eqref{condition for rho s};}
\end{equation}
\begin{equation}\label{condition on weak sol1}
\mbox{$\rho-\rho_s$ is a bounded map from $[0,T]$ into $L^1_{loc}\cap H^{-1}$ and $\rho\ge0$ a.e.;}
\end{equation}
\begin{equation}\label{condition on weak sol2}
\rho_0u_0\in L^2; \rho u,P-P_s,\nabla u,\rho f\in L^2(\R^3\times(0,T));\rho|u|^2\in L^1(\R^3\times(0,T));
\end{equation}
For all $t_2\ge t_1 \ge 0$ and $C^1$ test functions $\varphi\in\mathcal{D}(\R^3\times(-\infty,\infty))$ which are Lipschitz on $\R^3\times[t_1,t_2]$ with $\supp\varphi(\cdot,\tau)\subset K$, $\tau\in[t_1,t_2]$, where $K$ is compact and
\begin{align}\label{WF1}
\left.\int_{\R^3}\rho(x,\cdot)\varphi(x,\cdot)dx\right|_{t_1}^{t_2}=\int_{t_1}^{t_2}\int_{\R^3}(\rho\varphi_t + \rho u\cdot\nabla\varphi)dxd\tau;
\end{align}
The weak form of the momentum equation
\begin{align}\label{WF2}
\left.\int_{\R^3}(\rho u^{j})(x,\cdot)\varphi(x,\cdot)dx\right|_{t_1}^{t_2}=&\int_{t_1}^{t_2}\int_{\R^3}[\rho u^{j}\varphi_t + \rho u^{j}u\cdot\nabla\varphi + (P-P_s)\varphi_{x_j}]dxd\tau\notag\\
& - \int_{t_1}^{t_2}\int_{\R^3}[\mu\nabla u^{j}\cdot\nabla\varphi + (\mu - \xi)(\divv(u))\varphi_{x_j}]dxd\tau\\
&+ \int_{t_1}^{t_2}\int_{\R^3}(\rho f - \nabla P_s)\cdot\varphi dxd\tau.\notag
\end{align}
holds for test functions $\varphi$ which are locally Lipschitz on $\R^3 \times [0, T]$ and for which $\varphi,\varphi_t,\nabla\varphi \in L^2(\R^3 \times (0,T))$, $\nabla\varphi \in L^\infty(\R^3 \times (0,T))$, and $\varphi(\cdot,\tau) = 0$.

For the two solutions $(\rho,u,f,\rho_s)$ and $(\bar\rho,\bar{u},\bar{f},\bar{\rho}_s)$ we compare, they will be assumed to satisfy
\begin{equation}\label{spaces for weak sol 1}
u,\bar{u}\in C(\R^3\times(0,T])\cap L^1((0,T);W^{1,\infty})\cap L^\infty_{loc}((0,T];L^\infty);
\end{equation}
\begin{equation}\label{spaces for weak sol 2}
\rho-\rho_s,\bar{\rho}-\bar{\rho}_s,u,\bar{u},f,\bar{f}\in L^2(\R^2\times(0,T)).
\end{equation}
One of the solutions $(\rho,u,f,\rho_s)$ will have to satisfy
\begin{equation}\label{condition on weak sol3}
\|f\|_{L^\infty}<\infty,
\end{equation}
\begin{equation}\label{condition on weak sol4}
\rho,\rho^{-1}\in L^\infty(\R^3\times(0,T)),
\end{equation}
and 
\begin{equation}\label{condition on weak sol5}
\int_0^T\intox|u|^q dxd\tau<\infty
\end{equation}
for some $q>3$, and the other solution $(\bar\rho,\bar{u},\bar{f},\bar{\rho}_s)$ will have to satisfy
\begin{equation}\label{condition on weak sol6}
\|\bar{\rho}_s\|_{L^\infty}+\|\nabla\bar{\rho}_s\|_{L^\infty}<\infty,
\end{equation}
\begin{align}\label{condition on weak sol8}
&\int_0^T[\tau\|\nabla\bar{F}(\cdot,\tau)\|^2_{L^2}+\tau^\frac{4}{5}\|\nabla\bar{F}(\cdot,\tau)\|^{\frac{8}{5}}_{L^4}]d\tau\notag\\
&\qquad\qquad+\int_0^T[\tau\|\nabla\bar{\omega}(\cdot,\tau)\|^2_{L^2}+\tau^\frac{4}{5}\|\nabla\bar{\omega}(\cdot,\tau)\|^{\frac{8}{5}}_{L^4}]d\tau\\
&\qquad\qquad\qquad\qquad+\int_0^T[\|\bar{u}(\cdot,\tau)\|^2_{L^\infty}+\tau\|\nabla\bar{u}(\cdot,\tau)\|^2_{L^\infty}]d\tau<\infty,\notag
\end{align}
where $\bar{F}$ and $\bar{\omega}$ are as in \eqref{elliptic eqn for F}-\eqref{elliptic eqn for omega} and 
\begin{equation}\label{condition on weak sol9}
\bar{f}\in L^{2\tilde q},
\end{equation}
for some $\tilde q\in[1,\infty]$. Finally, we assume that
\begin{equation}\label{condition on weak sol10}
\rho_0-\bar{\rho}_0\in L^2\cap L^{2\tilde q'},
\end{equation}
where $\tilde q'$ is the H\"{o}lder conjugate of $\tilde q$ in \eqref{condition on weak sol9}.

\subsection{Main results}

We state the main results of this paper. First of all, the following theorem gives the global-in-time existence and long time behaviour of weak solution $(\rho,u)$ to \eqref{NS}:

\begin{thm}\label{existence of weak sol thm}
Given $\rho_{1},\rho_{2},\rho_\infty>0$, let $P$, $f$, $\lambda$, $\mu$ be the system parameters in \eqref{NS} satisfying \eqref{assumption on viscosity}, \eqref{condition on P}, \eqref{condition on f} and \eqref{condition for rho s}. The system \eqref{NS} has a global-in-time weak solution $(\rho,u)$ provided that
\begin{align}\label{smallness assumption} 
\left\{ \begin{array}{l}
\rho_{2}\le\rho_0\le\rho_{1} \text{ a.e.},\qquad\rho_0\in L^\infty,\\
\dis{\intox|u_0|^q dx<\infty},\\
C_0:=\|\rho_0-\rho_s\|^2_{L^2}+\|u_0\|^2_{H^{\alpha}}\ll1,\qquad\alpha\in(\frac{1}{2},1],
\end{array}\right.
\end{align}
where $q>6$ satisfies
\begin{align}\label{condition on q and viscosity}
\frac{\mu}{\lambda}>\frac{(q-2)^2}{4(q-1)}>\frac{4}{5},
\end{align}
and $\rho_s$ is a steady state solution satisfying \eqref{eqn for steady state} and \eqref{bounds on rho s}. The solution can be shown to satisfy conditions \eqref{condition on rho s}-\eqref{spaces for weak sol 2} and \eqref{condition on weak sol4}-\eqref{condition on weak sol5} and the energy estimates: there exist $C>0$ and $\theta>0$ such that
\begin{align}\label{energy bound on weak sol}
&\sup_{0\le \tau< \infty}\intox[\rho|u|^2+|\rho-\rho_s|^2+\sigma(\tau)^{1-\alpha}|\nabla u|^2+\sigma(\tau)^{2-\alpha}|\dot{u}|^2)dx\notag\\
&\qquad+\int_0^\infty\intox(|\nabla u|^2+\sigma(\tau)^{1-\alpha}|\dot{u}|^2+\sigma(\tau)^{2-\alpha}|\nabla\dot{u}|^2)dxd\tau\le CC_0^\theta,
\end{align}
\begin{align}\label{pointwise bound on rho weak sol}
\frac{1}{2}\rho_{2}\le\rho\le2\rho_{1} \text{ a.e.}
\end{align}
\begin{equation}\label{Holder estimates in space time}
\langle u\rangle^{\frac{1}{2},\frac{1}{4}}_{\R^3 \times [\tau,\infty)},\langle F\rangle^{\frac{1}{2},\frac{1}{4}}_{\R^3 \times [\tau,\infty)},\langle \omega\rangle^{\frac{1}{2},\frac{1}{4}}_{\R^3 \times [\tau,\infty)}\leq C(\tau)C_{0}^{\theta},\;\;\;\tau>0,
\end{equation}
where $\sigma(\tau)=\min\{1,\tau\}$, and $C(\tau)$ may depend additionally on a positive lower bound for $\tau$;
\begin{align}\label{bound on u weak sol}
\sup_{0\le \tau<\infty}\|u(\cdot,\tau)\|_{H^{\alpha}}\le CC_0^\theta,\qquad \sup_{0\le \tau<\infty}\|u(\cdot,\tau)\|_{L^{r}}\le C(r)C_0^\theta,
\end{align}
where $r\in(3,\frac{3}{1-\alpha})$ with $C(r)>0$ being a positive constant which depends only on $r$. Furthermore, if $\rho_0$ and $\rho_s$ are {\it piecewise} $C^{\beta_0}$ for some $\beta_0>0$ in the sense of Definition~\ref{definition of piecewise C beta} that
\begin{equation}\label{piecewise holder for initial rho}
\|\rho_0(\cdot)-\rho_s\|_{C^{\beta_0}_{pw}}\le N,
\end{equation}
for some $N>0$, then for each positive time $T$ and $t\in[0,T]$, there exists function $\beta(t)\in(0,\beta_0]$ and positive constant $C(N,T,C_0)$ such that $\rho$ is {\it piecewise} $C^{\beta(t)}$ on $[0,T]$ with
\begin{equation}\label{bound on time integral on nabla u}
\sup_{0\le \tau\le T}\|\rho(\cdot,\tau)-\rho_s\|_{C^{\beta(\tau)}_{pw}}+\int_0^T\|\nabla u(\cdot,\tau)\|_{L^\infty}d\tau\le C(N,T,C_0).
\end{equation}
Moreover, if we define 
\begin{align*}
u^j_P(\cdot,\tau)=(\mu+\lambda)^{-1}\Gamma_{x_j}*(P(\rho(\cdot,\tau))-P_s(\cdot)),
\end{align*}
where $P_s=P(\rho_s)$ and $\Gamma$ is the fundamental solution of the Laplace operator on $\R^3$, and
\begin{align*}
u^j_F(\cdot,\tau)=(\mu+\lambda)^{-1}[\Gamma_{x_j}*F+\mu*\omega^{j,k}],
\end{align*}
where $F$ and $\omega$ are as in \eqref{def of F and omega}, then $u=u_F+u_P$. In particular, for $\tilde\alpha\in(0,\alpha-\frac{1}{2})$, $\nu>\max\{\frac{1}{2},\alpha-\frac{1}{2}-\tilde\alpha\}$, $q_1\ge\frac{10}{3}$, $q_2\in[\frac{10}{3},6]$ and $q_3>2$, we have
\begin{align}\label{long time 0}
\lim_{T\to\infty}\frac{1}{T}\int_0^T\|(\rho-\rho_s)(\cdot,\tau)\|_{L^{q_1}}^{q_1}d\tau=0,
\end{align}
\begin{align}\label{long time 1}
\lim_{T\to\infty}\frac{1}{T^\nu}\int_0^T(\|u_F(\cdot,\tau)\|_{C^{1+\tilde\alpha}}+\|F(\cdot,\tau)\|_{C^{\tilde\alpha}})d\tau=0,
\end{align}
\begin{align}\label{long time 2}
\lim_{T\to\infty}(\|F(\cdot,T)\|_{L^{q_2}}+\|\omega(\cdot,T)\|_{L^{q_2}})=0
\end{align}
and
\begin{align}\label{long time 3}
\lim_{T\to\infty}(\|u(\cdot,T)\|_{L^{q_3}}+\|\rho-\rho_s\|_{L^\infty(\R^3\times[T,\infty))})=0.
\end{align}
\end{thm}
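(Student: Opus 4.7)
The plan is to obtain the weak solution as a limit of smooth approximate solutions $(\rho^\delta,u^\delta)$, built by mollifying $(\rho_0,u_0)$ and invoking the Nash/Tani local classical existence theory, and to close all of the stated bounds uniformly in $\delta$ so as to continue each approximate solution globally in time. The starting point is the basic energy identity: test \eqref{NS}$_2$ against $u$ and combine the terms $\intox \rho u\cdot f$ and $\intox u\cdot\nabla P_s$ via the steady-state equation \eqref{eqn for steady state}; this produces uniform-in-time bounds for $\sqrt{\rho}u$ and for the modulated potential $G(\rho,\rho_s)=\int_{\rho_s}^{\rho}(P(s)-P(\rho_s))/s^2\,ds$, the latter being comparable to $|\rho-\rho_s|^2$ once $\rho$ is known to lie in a compact set bounded away from $0$. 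Higher-order bounds come by testing \eqref{NS}$_2$ against $\sigma(\tau)^{2-\alpha}\dot u$ and exploiting the cancellation structure $(\mu+\lambda)\divv\dot u\leftrightarrow (\rho_s F)_t$ produced by the definition \eqref{def of F and omega} of $F$; the weight $\sigma(\tau)^{2-\alpha}$ in \eqref{energy bound on weak sol} is forced by the interpolation inequality $\|\nabla u_0\|_{L^2}^{2}\le\|u_0\|_{H^\alpha}^{2\alpha}\|u_0\|_{\dot H^1}^{2(1-\alpha)}$ used to absorb boundary terms at $\tau=0$, which is exactly where the $\alpha$-dependence enters.

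\textbf{The $\mathcal A$-functional and pointwise bounds on $\rho$.} The decisive nonlinear estimate is the control of $\int_0^t\|\nabla u\|_{L^4}^4\,d\tau$. Using $(\mu+\lambda)\Delta u^j=(\rho_s F)_{x_j}+(\mu+\lambda)\omega^{j,k}_{x_k}+(P-P_s)_{x_j}$ together with elliptic regularity for \eqref{elliptic eqn for F}-\eqref{elliptic eqn for omega}, this reduces to controlling $\int_0^t\|\rho-\rho_s\|_{L^4}^4\,d\tau$. On $[0,1]$ the interpolation of $L^2$ and $L^\infty$ bounds on $\rho-\rho_s$ suffices; on $[1,t]$ I would introduce $\mathcal A(t)=\IntL|\rho-\rho_s|^{10/3}\,dxd\tau$, differentiate $\intox|\rho-\rho_s|^{10/3}dx$ using \eqref{NS}$_1$, and substitute the identity $\divv u=[\rho_s F+(P-P_s)]/[\rho_s(\mu+\lambda)]$ to obtain a differential inequality closing uniformly in $t$ under the smallness of $C_0$. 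The pointwise bound \eqref{pointwise bound on rho weak sol} on $\rho$ then follows by integrating $\tfrac{D}{Dt}\log\rho=-[\rho_s F+(P-P_s)]/[\rho_s(\mu+\lambda)]$ along particle trajectories: the integral of $\rho_s F\circ X$ is small by the $\mathcal A$-bound coupled with Hölder estimates on $F$ in space-time, and the integral of $(P-P_s)\circ X$ is handled by a Gronwall argument on $\rho-\rho_s$. The Hölder estimates \eqref{Holder estimates in space time} and the bounds \eqref{bound on u weak sol} follow by Morrey embedding applied to the weighted $\dot u$-estimate in \eqref{energy bound on weak sol} together with interpolation in $H^\alpha$.

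\textbf{Piecewise Hölder propagation, the decomposition $u=u_F+u_P$, and long-time limits.} Assuming \eqref{piecewise holder for initial rho}, propagate piecewise $C^{\beta(t)}$ regularity by writing the mass equation along Lagrangian trajectories, $(\rho\circ X)'=-(\rho\divv u)\circ X$, and again replacing $\divv u$ by $[\rho_s F+(P-P_s)]/[\rho_s(\mu+\lambda)]$; the exponent $\beta(t)\in(0,\beta_0]$ then evolves by a Gronwall-type inequality with driver $\|\nabla u\|_{L^\infty}$. For \eqref{bound on time integral on nabla u}, decompose $u=u_F+u_P$: the part $\nabla u_F$ is bounded in $L^\infty$ by Riesz-potential estimates applied to $F,\omega\in C^{\tilde\alpha}$, while $\nabla u_P$, although only log-Lipschitz from $P\in L^\infty$, is upgraded to $L^\infty$ once $P-P_s$ is known to be piecewise $C^{\beta(t)}$, via the representation $u_P^{j}=(\mu+\lambda)^{-1}\Gamma_{x_j}*(P-P_s)$ and standard singular-integral estimates for piecewise Hölder densities. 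The long-time limits then fall out: \eqref{long time 0} is a Cesaro consequence of the uniform bound on $\mathcal A(t)$; \eqref{long time 1} and \eqref{long time 2} combine the Hölder continuity in time from \eqref{Holder estimates in space time} with the integrability of $\|F\|_{L^p}^{2}+\|\omega\|_{L^p}^2$ given by \eqref{energy bound on weak sol}; \eqref{long time 3} follows from the uniform $H^\alpha$-bound on $u$ combined with the decay of $\rho-\rho_s$ in $L^\infty$ that emerges from integrating $\tfrac{D}{Dt}\log\rho$ over long time windows.

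\textbf{Main obstacle and passage to the limit.} The hardest step is closing the bootstrap that simultaneously controls $\mathcal A(t)$, $\int_0^t\|\nabla u\|_{L^4}^4\,d\tau$, and the characteristic integral that governs $\rho\in L^\infty$, \emph{without} any smallness assumption on $\|\rho_0-\rho_s\|_{L^\infty}$. This requires a delicate absorption that relies crucially on the smallness of $C_0$ in \eqref{smallness assumption} together with the restriction \eqref{condition on q and viscosity} on the viscosity ratio; the latter is precisely what lets one absorb the $\|\nabla u\|_{L^4}^{4}$ term back into the good dissipation at the correct level. A secondary obstacle is verifying that the Hölder exponent $\beta(t)$ remains strictly positive on every finite time interval even though $u_P$ is a priori only log-Lipschitz, which requires a Gronwall estimate on the logarithm of the modulus of continuity of $\rho-\rho_s$. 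Once the uniform bounds are established, the passage $\delta\to 0$ is routine: the uniform $L^\infty$ bound on $\rho^\delta$ gives strong $L^p_{\loc}$ compactness, the weighted $H^1$ bounds on $u^\delta$ paired with Aubin-Lions deliver strong $L^2_{\loc}$ convergence of $u^\delta$, and continuity of $P$ together with the pointwise bounds on $\rho^\delta$ handles convergence of the pressure, so the weak formulations \eqref{WF1}-\eqref{WF2} pass to the limit.
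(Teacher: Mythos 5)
Your overall architecture matches the paper's: mollified data plus Nash--Tani local existence, uniform \emph{a priori} bounds via the effective viscous flux $F$ and the vorticity $\omega$, the functional $\mathcal A(t)=\int_1^t\intox|\rho-\rho_s|^{10/3}dxd\tau$ for the large-time control of $\int\|\nabla u\|_{L^4}^4d\tau$, a trajectory/maximum-principle argument for the pointwise density bounds, the decomposition $u=u_F+u_P$ with $u_P$ log-Lipschitz for the piecewise H\"older propagation, and compactness to pass to the limit. However, two steps as you describe them would fail.

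First, your mechanism for producing the $\alpha$-dependent weights $\sigma(\tau)^{1-\alpha}$, $\sigma(\tau)^{2-\alpha}$ is not viable. The inequality $\|\nabla u_0\|_{L^2}^{2}\le\|u_0\|_{H^\alpha}^{2\alpha}\|u_0\|_{\dot H^1}^{2(1-\alpha)}$ is not a correct interpolation (the exponents do not balance), and, more importantly, its right-hand side is infinite for the data under consideration: $u_0\in H^\alpha$ with $\alpha\in(\frac12,1]$ need not lie in $\dot H^1$, and the entire purpose of the time weights is to compensate for exactly that failure. The paper instead splits $u=w_1+w_2$ with $\mathcal Lw_1=0$, $w_1(\cdot,0)=u_0$, proves the two endpoint estimates (weight $\tau^{0}$ for $H^1$ data in \eqref{H1 bound on w1 smooth}, weight $\tau^{1}$ for $L^2$ data in \eqref{H1 bound on w1 not smooth}), and applies Riesz--Thorin interpolation to the \emph{linear solution operator} $w_{10}\mapsto w_1$ to obtain the weight $\tau^{1-\alpha}$ for $H^\alpha$ data. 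Without some such operator-interpolation argument, the $\alpha$-dependence of the smoothing rates --- the main new content of the theorem --- is not established.

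Second, the pointwise bound on $\rho$ and the role of \eqref{condition on q and viscosity}. You propose to control $\int_{t_0}^{t_1}\rho_sF(x(\tau),\tau)\,d\tau$ along trajectories by ``the $\mathcal A$-bound coupled with H\"older estimates on $F$,'' but the H\"older estimates \eqref{Holder estimates in space time} carry constants $C(\tau)$ that degenerate as $\tau\to0$ and in any case do not control $\int_0^t\|F(\cdot,\tau)\|_{L^\infty}d\tau$. The paper's argument represents $\rho_sF$ through the Newtonian potential of $\rho\dot u$, integrates by parts in time (Hoff's device) to replace $\rho\dot u$ by $(\rho u)_t+\divv(\rho u\otimes u)$, and then bounds $\|\Gamma_{x_j}*(\rho u^{j})\|_{L^\infty}$ by interpolating $L^2$ against the $L^q$ bound of Lemma~\ref{Lq bound on u lem}; this is where $q>6$ and the condition \eqref{condition on q and viscosity} actually enter --- not, as you assert, in absorbing $\|\nabla u\|_{L^4}^4$ into the dissipation. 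Two smaller points: \eqref{long time 3} does not follow from the uniform $H^\alpha$ bound on $u$ (a uniform bound gives no decay; one needs the vanishing of $F,\omega,\rho-\rho_s$ plus interpolation), and your route to \eqref{long time 1} via time-H\"older continuity does not yield the precise threshold $\nu>\max\{\frac12,\alpha-\frac12-\tilde\alpha\}$, which in the paper comes from bounding $\|u_F\|_{C^{1+\tilde\alpha}}+\|F\|_{C^{\tilde\alpha}}$ by $\|\dot u\|_{L^r}$ and integrating the weighted $\dot u$ estimates separately on $[0,1]$ and $[1,T]$.
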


\begin{rem}
Using the energy estimate \eqref{energy bound on weak sol}, one can show that the {\it long time average} of $\|(\rho-\rho_s)(\cdot,\tau)\|_{L^2}^{2}$ on $[0,T]$ satisfies
\begin{align}\label{L^2 space time long time limit}
\lim_{T\to\infty}\frac{1}{T}\int_0^T\|(\rho-\rho_s)(\cdot,\tau)\|_{L^2}^{2}d\tau\le CC_0^\theta.
\end{align}
However, it is unknown whether the limit in \eqref{L^2 space time long time limit} vanishes or not.
\end{rem}

\begin{rem}
By choosing $\nu=1$ in \eqref{long time 1}, the result shows that the long time averages of $\|u_F(\cdot,\tau)\|_{C^{1+\tilde\alpha}}$ and $\|F(\cdot,\tau)\|_{C^{\tilde\alpha}}$ on $[0,T]$ vanish as $T$ goes to infinity. On the other hand, it is unknown whether there is any non-zero limit for the long time average in \eqref{long time 1} if $\nu\le\max\{\frac{1}{2},\alpha-\frac{1}{2}-\tilde\alpha\}$.
\end{rem}

Once Theorem~\ref{existence of weak sol thm} is proved, we are able to obtain the following stability result on weak solutions which is given in Theorem~\ref{Main thm}:

\begin{thm}\label{Main thm}
Given $a>0$, let $f$, $\lambda$, $\mu$ be the system parameters in \eqref{NS} satisfying \eqref{assumption on viscosity}, \eqref{condition on f} and \eqref{condition for rho s}, and assume $P$ satisfies
\begin{align}\label{extra condition on P}
P(\rho)=a\rho.
\end{align}
Assume that $(\rho_0,u_0)$ and $(\bar{\rho}_0,\bar{u}_0)$ are functions satisfying \eqref{condition on weak sol10}, \eqref{smallness assumption} and \eqref{piecewise holder for initial rho}. Then for each $T>0$ and $C>0$, there is a positive constant $M=M(T,C)$ such that if $(\rho,u,f,\rho_s)$ and $(\bar\rho,\bar{u},\bar{f},\bar{\rho}_s)$ are weak solutions of \eqref{NS} satisfying \eqref{spaces for weak sol 1}-\eqref{condition on weak sol9}, and if all the norms occurring in the above conditions are bounded by $C$, then 
\begin{align}\label{bound on difference}
&\left(\int_0^T\intox|u-\bar{u}|^2dxd\tau\right)^\frac{1}{2}+\sup_{0\le \tau\le T}\|(\rho-\bar{\rho})(\cdot,\tau)\|_{H^{-1}}\notag\\
&\qquad\le M\left[\|\rho_0-\bar{\rho}_0\|_{L^2\cap L^{2\tilde q'}}+\|\rho_0u_0-\bar{\rho}_0\bar{u}_0\|_{L^2}\right]\\
&\qquad\qquad+M\left[\left(\intox|\rho_s-\bar{\rho}_s|^2dx\right)^\frac{1}{2}+\left(\int_0^T\intox|f-\bar{f}\circ S|^2dxd\tau\right)^\frac{1}{2}\right],\notag
\end{align}
where $\tilde q'$ is given in \eqref{condition on weak sol10} and $S=S(x,t)$ is defined in \eqref{def of S intro}. 
\end{thm}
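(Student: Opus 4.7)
The plan is to follow the Lagrangian-comparison strategy outlined in the introduction. Since $\bar u \in L^1((0,T);W^{1,\infty})$ by \eqref{spaces for weak sol 1}, the trajectories $X(y,t,s)$ and $\bar X(y,t,s)$ are well defined and bi-Lipschitz in $y$, and the map $S$ in \eqref{def of S intro} is a $C^1$-in-$t$ diffeomorphism with Jacobian bounded above and below. I work throughout with $w := u - \bar u\circ S$, exploiting the identity
\begin{equation*}
(\bar u_t + \nabla \bar u\,\bar u)\circ S = (\bar u \circ S)_t + \nabla(\bar u\circ S)\,u,
\end{equation*}
which converts the convective derivative of $\bar u$ along $\bar X$ into a convective derivative of $\bar u\circ S$ along $X$ and is what allows the two momentum equations to be subtracted consistently.

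\textbf{Equation for $w$ and adjoint estimate.} With $P(\rho)=a\rho$, subtracting the weak formulation \eqref{WF2} for $(\rho,u)$ from the pulled-back weak formulation for $(\bar\rho\circ S,\bar u\circ S)$ produces a weak equation of the schematic form
\begin{equation*}
\rho \dot w - \mu\Delta w - \lambda\nabla\divv w = -a\nabla(\rho-\bar\rho\circ S) - \nabla(P_s - \bar P_s\circ S) + \rho(f - \bar f\circ S) + R,
\end{equation*}
where $R$ collects the commutator corrections that arise because $\Delta$ and $\nabla\divv$ do not commute with composition by $S$, together with terms measuring the deviations of $\rho$ from $\bar\rho\circ S$ and of $\nabla S$ from the identity. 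To estimate $\|w\|_{L^2(\R^3\times(0,T))}$, I would set up a Hoff-type adjoint argument: for arbitrary $\psi\in L^2$, solve backward-in-time the linear parabolic system with coefficient $\rho$, convection $u$, and source $\psi$; using the weighted regularity \eqref{Holder estimates in space time}--\eqref{condition on weak sol8} for the background solution, extract $\dis\int_0^T(\|\nabla\varphi\|_{L^\infty}+\|\varphi\|_{L^\infty})\,d\tau \le C\|\psi\|_{L^2}$. Pairing $\varphi$ against the equation for $w$, the density contribution $a\int(\rho-\bar\rho\circ S)\divv\varphi$ is in turn controlled by first establishing
\begin{equation*}
\sup_{0\le t\le T}\|(\rho-\bar\rho\circ S)(\cdot,t)\|_{H^{-1}} \le C\|\rho_0-\bar\rho_0\|_{L^2} + C\int_0^T\|w(\cdot,\tau)\|_{L^2}d\tau,
\end{equation*}
obtained from the mass equation for $\rho$ and the transport equation for $\bar\rho\circ S$ along $X$; the $L^{2\tilde q'}$-propagation of $\rho-\bar\rho\circ S$ from the initial data then lets us bound $(\rho-\bar\rho\circ S)\bar f\circ S$ by H\"older's inequality using \eqref{condition on weak sol9}. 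The steady-state term is controlled directly by $\|\rho_s-\bar\rho_s\|_{L^2}$, and the forcing term by $\|\rho\|_{L^\infty}\|f - \bar f\circ S\|_{L^2(\R^3\times(0,T))}$. Choosing $\psi = w$ and applying a Gronwall step closes the estimate and yields the $L^2$ bound on $w$ in terms of the four data quantities on the right-hand side of \eqref{bound on difference}.

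\textbf{Reverting from $\bar u\circ S$ to $\bar u$.} The hypothesis \eqref{piecewise holder for initial rho} applied to $\bar\rho_0$, together with Theorem~\ref{existence of weak sol thm}, provides $\dis\int_0^T\|\nabla \bar u(\cdot,\tau)\|_{L^\infty}d\tau \le C$. Since $S_t = u - \bar u\circ S = w$ pointwise, one has $|S(x,t)-x|\le \int_0^t|w(x,\tau)|d\tau$, so
\begin{equation*}
\intox|x-S(x,t)|^2dx \le Ct\int_0^t\intox|w|^2\,dxd\tau,
\end{equation*}
and hence $\|\bar u\circ S - \bar u\|_{L^2(\R^3\times(0,T))} \le \|\nabla \bar u\|_{L^1_tL^\infty_x}\sup_{t\le T}\|S(\cdot,t)-\id\|_{L^2}$ upgrades the bound on $w$ to the velocity bound in \eqref{bound on difference}. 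For the density piece, write $\|\rho-\bar\rho\|_{H^{-1}} \le \|\rho-\bar\rho\circ S\|_{H^{-1}} + \|\bar\rho\circ S - \bar\rho\|_{H^{-1}}$, bound the first term by the estimate just derived, and bound the second using $\bar\rho(S(x,t),t) = \bar\rho_0(X(x,0,t))$ with the flow-map displacement bound and the piecewise H\"older regularity of $\bar\rho$ (so that $\bar\rho$ is differenced along a Lipschitz curve with controlled jumps).

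\textbf{Main obstacle.} The most delicate step is the adjoint estimate for $w$: because $\rho$ is only essentially bounded and may be discontinuous across the interface in \eqref{piecewise holder for initial rho}, standard parabolic regularity for the adjoint equation is unavailable, and the pointwise control of $\nabla\varphi$ has to be extracted by decomposing the velocity as $\bar u = \bar u_F + \bar u_P$ as in Theorem~\ref{existence of weak sol thm} and invoking the weighted estimates \eqref{condition on weak sol8} on $\nabla\bar F$ and $\nabla\bar\omega$. A secondary technical difficulty is the careful bookkeeping of the commutator terms inside $R$, in particular $\mu(\Delta\bar u)\circ S - \mu\Delta(\bar u\circ S)$, which involves $\nabla^2\bar u$ and must be absorbed using $\|\nabla S - \id\|_{L^\infty}\to 0$ as $t\to 0$ together with the $t$-weighted $L^2_t$-integrability of $\nabla\bar F$ and $\nabla\bar\omega$ supplied by \eqref{condition on weak sol8}.
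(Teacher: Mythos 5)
Your overall architecture coincides with the paper's: compare the two solutions along the map $S$ of \eqref{def of S intro}, estimate $z=u-\bar u\circ S$ by a Hoff-type duality against a backward adjoint parabolic problem, propagate $\|\rho-\bar\rho\|_{H^{-1}}$ from the mass equations, and revert from $\bar u\circ S$ to $\bar u$ using $\int_0^T\|\nabla\bar u\|_{L^\infty}\,d\tau<\infty$ and the displacement bound \eqref{bound on S}. However, two steps as you describe them would not go through.

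First, the adjoint estimate you invoke, $\int_0^T\bigl(\|\nabla\varphi\|_{L^\infty}+\|\varphi\|_{L^\infty}\bigr)d\tau\le C\|\psi\|_{L^2}$, is not available: the adjoint system has only bounded measurable coefficients $\rho$, and an $L^2$ source yields $D^2_x\varphi\in L^2(\R^3\times(0,T))$, hence $\nabla\varphi\in L^2_tL^6_x$ and $L^2_tL^4_x$, but nothing close to $L^1_tL^\infty_x$. The paper's proof uses only the $L^2$-based bounds \eqref{bound on psi 1} together with $\sup_t\|\psi^\varepsilon\|_{L^\infty}$ and the space-time $L^q$ bound \eqref{bound on psi 2} on $\psi^\varepsilon$ itself (not on $\nabla\psi^\varepsilon$), and these suffice for every remainder term; you should check that none of your pairings actually requires the unobtainable $L^\infty$ gradient bound.

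Second, the strong-form equation for $w$ with a commutator $R$ containing $(\Delta\bar u)\circ S-\Delta(\bar u\circ S)$, i.e.\ $\nabla^2\bar u$, is a dead end: since $\bar\rho$ is only piecewise H\"older, $\nabla(P(\bar\rho)-\bar P_s)$ and hence $\nabla^2\bar u$ carry a singular (measure) part across the discontinuity surface, and $\|\nabla S-\id\|_{L^\infty}$ is merely bounded, not small, so there is nothing to absorb it with. The paper never differentiates $\bar u$ twice. It takes $\bar\psi=\psi\circ S^{-1}$ as test function in the weak form \eqref{WF2} for $(\bar\rho,\bar u)$ and regroups the viscosity and pressure terms into $(\mu+\lambda)\divv\bar u-(P(\bar\rho)-\bar P_s)=\bar\rho_s\bar F$ plus the vorticity, so that the only objects hit by a second derivative are $\bar\rho_s\bar F$ and $\bar\omega$, which are exactly what \eqref{condition on weak sol8} controls; the resulting error $\mathcal{R}_1$ pairs $\nabla(\bar\rho_s\bar F)$ and $\nabla\bar\omega$ with the displacement $\psi^\varepsilon-\psi^\varepsilon\circ S^{-1}$, which is small in terms of $\|z\|_{L^2}$. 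Relatedly, the final absorption is not a plain Gronwall inequality: $\mathcal{R}_1$ contributes $C|t_2-t_1|^{(2\alpha-1)/4}\|z\|_{L^2}\|G\|_{L^2}$ with an $O(1)$ constant, and the paper closes the estimate by a finite iteration over time intervals of a fixed small length $\tilde\tau$, using the small power of the interval length. With these two repairs your outline reduces to the paper's argument.
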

\begin{rem}\label{remark on pressure}
Similar to the case as in \cite{hoff06} and \cite{suen20b}, under a more general condition on the pressure $P$, namely
\begin{equation}\label{more general condition on P}
\sup_{0\le \tau\le T}\Big\|\nabla\Big(\frac{P(\rho(\cdot,\tau))-P(\bar{\rho}(\cdot,\tau))}{\rho(\cdot,\tau)-\bar{\rho}(\cdot,\tau)}\Big)\Big\|_{L^r}<\infty,
\end{equation}
for some $r\in[3,\infty]$, one can still obtain the same conclusion \eqref{bound on difference} from Theorem~\ref{Main thm}. Such condition on $P$ includes the one given in \cite{hoff06} and \cite{suen20b}.
\end{rem}
\begin{rem}
If we further assume that $\|\nabla\bar{f}\|_{L^\infty}<\infty$, then the term $f-\bar{f}\circ S$ can be replaced by $f-\bar{f}$ in \eqref{bound on difference}; see Remark~\ref{explanation on f} for the explanation.
\end{rem}

The rest of the paper is organised as follows. In Section~\ref{prelim section}, we give some {\it a priori} estimates on the smooth solutions to \eqref{NS}. In Section~\ref{proof of existence section}, we make use of those estimates obtained in Section~\ref{prelim section} to prove the global-in-time existence and long time behaviour of weak solution to \eqref{NS} described in Theorem~\ref{existence of weak sol thm}. Finally in Section~\ref{proof of main thm section}, we address the stability of weak solutions given in Theorem~\ref{Main thm} by making use of the Lagrangean framework and bounds on the weak solutions.

\section{A priori estimates}\label{prelim section} 

In this section we derive {\em a priori} bounds for smooth, local-in-time solutions $(\rho,u)$ of \eqref{NS}. We first derive {\it a priori} bounds for local-in-time smooth solutions under the assumption that the density $\rho$ is bounded above and below as given by \eqref{pointwise bound on rho}. We then proceed to close the estimates by obtaining the necessary bounds for density in a maximum principle argument along particle trajectories of the velocity.

Given a steady state solution  $\rho_s$ satisfying \eqref{eqn for steady state} and \eqref{bounds on rho s}, we define functionals $\Phi_{1}(t),\Phi_{2}(t),\Phi_{3}(t)$ and $\Phi(t)$ for a given solution $(\rho,u)$ by
\begin{align*}
\Phi_1(t)&=\sup_{0\le \tau\le t}(\|u(\cdot,\tau)\|_{L^2}^2+||(\rho-\rho_s)(\cdot,\tau)||_{L^2}^2) + \int_{0}^{t}\|\nabla u(\cdot,\tau)\|_{L^2}^2d\tau,\\
\Phi_2(t)&=\sup_{0\le \tau\le t}\sigma(\tau)^{1-\alpha}\|\nabla u(\cdot,\tau)\|^2_{L^2}+\int_0^t\sigma(\tau)^{1-\alpha}\|\dot{u}(\cdot,\tau)\|_{L^2}^2d\tau,\\
\Phi_3(t)&=\sup_{0\le \tau\le t}\sigma(\tau)^{2-\alpha}\|\dot{u}(\cdot,\tau)\|^2_{L^2}+\int_0^t\sigma(\tau)^{2-\alpha}\|\nabla\dot{u}(\cdot,\tau)\|^2_{L^2}d\tau,
\end{align*}
\begin{align*}
\Phi(t)=\Phi_1(t)+\Phi_2(t)+\Phi_3(t),
\end{align*}
where we recall that $\alpha\in(\frac{1}{2},1]$ and $\sigma(\tau)=\min\{1,\tau\}$. We will obtain {\it a priori} bounds on the above functionals and the results can be summarised as follows:

\begin{thm}\label{a priori bounds}
Given $\rho_{1},\rho_{2},\rho_\infty>0$ and $\gamma\ge1$, let $P$, $f$, $\lambda$, $\mu$ be the system parameters in \eqref{NS} satisfying \eqref{assumption on viscosity}, \eqref{condition on P}, \eqref{condition on f} and \eqref{condition for rho s}. Given $q>6$, there are positive constants $M, \theta$ depending on the parameters and assumptions in \eqref{assumption on viscosity}, \eqref{condition on P}, \eqref{condition on f} and \eqref{condition for rho s}, such that: if $\rho_s$ is a steady state solution satisfying \eqref{eqn for steady state} and \eqref{bounds on rho s}, and if $(\rho,u)$ is a solution of \eqref{NS} on $\R^3\times[0,T]$ with initial data $(\rho_0,u_0)\in H^3(\R^3)$ satisfying \eqref{smallness assumption} with the smallness assumption $C_0\ll1$, then we have
\begin{align}\label{pointwise bound on rho}
\mbox{$\frac{1}{2}\rho_{2}\le\rho(x,t)\le2\rho_{1}$ on $\R^3\times[0,T]$},
\end{align}
and 
\begin{align}\label{bound on Phi and H}
\mbox{$\Phi(t)\le MC_{0}^{\theta}$ on $\R^3\times[0,T]$,}
\end{align}
where $\alpha\in(\frac{1}{2},1]$ and $C_0:=\|\rho_0-\rho_s\|^2_{L^2}+\|u_0\|^2_{H^{\alpha}}$.
\end{thm}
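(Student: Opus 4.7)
The plan is a standard continuity (bootstrap) argument. I would fix $T^\ast\in(0,T]$ as the largest time on which the \emph{weakened} bounds $\frac{1}{4}\rho_2\le \rho\le 4\rho_1$ and $\Phi(t)\le 2MC_0^\theta$ both hold, and then show, under these assumptions, that the \emph{strict} bounds \eqref{pointwise bound on rho} and \eqref{bound on Phi and H} follow provided $C_0\ll 1$. By continuity of $\rho$ and $\Phi$, this forces $T^\ast=T$. Throughout, the key tools are the elliptic identities \eqref{elliptic eqn for F}--\eqref{elliptic eqn for omega} for $F$ and $\omega$, the pointwise decomposition $(\mu+\lambda)\Delta u^j=(\rho_sF)_{x_j}+(\mu+\lambda)\omega^{j,k}_{x_k}+(P-P_s)_{x_j}$, and Sobolev/Morrey interpolation to pass between $L^2$ and $L^p$ controls on $\nabla u$.

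\textbf{Bounding $\Phi_1$, $\Phi_2$, $\Phi_3$.} For $\Phi_1$, I would multiply \eqref{NS}$_2$ by $u$, combine with \eqref{NS}$_1$, and treat the potential force together with $\nabla P_s$ by introducing the relative potential $G(\rho)-G(\rho_s)-G'(\rho_s)(\rho-\rho_s)$ with $G'(\rho)=\int_{\rho_\infty}^\rho s^{-2}P'(s)\,ds$; the identity $\nabla P_s=\rho_s\nabla\Psi$ makes the forcing collapse to $(\rho-\rho_s)\nabla\Psi$, whose $L^2$ bound is controlled by $\|\rho-\rho_s\|_{L^2}$ via \eqref{pointwise bound on rho}. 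For $\Phi_2$ I multiply the momentum equation by $\sigma(\tau)^{1-\alpha}\dot u$ and integrate by parts, which produces a damping $\sigma^{1-\alpha}|\dot u|^2$ against cubic terms controlled by $\|\nabla u\|_{L^3}\|\nabla u\|_{L^2}^2$. For $\Phi_3$ I apply $D/Dt$ to \eqref{NS}$_2$, multiply by $\sigma(\tau)^{2-\alpha}\dot u$, and use the commutator identities between $\partial_t+u\cdot\nabla$ and the spatial derivatives, following Hoff's framework. The cubic and quartic terms are absorbed using
\begin{align*}
\|\nabla u\|_{L^p}\le M\bigl(\|F\|_{L^p}+\|\omega\|_{L^p}+\|\rho-\rho_s\|_{L^p}\bigr),
\end{align*}
together with $\|\nabla F\|_{L^2}+\|\nabla\omega\|_{L^2}\le M(\|\rho\dot u\|_{L^2}+\|\rho f-\nabla P_s\|_{L^2})$ from \eqref{elliptic eqn for F}--\eqref{elliptic eqn for omega}, and Sobolev $H^1\hookrightarrow L^6$ with $L^2$-interpolation to reach $L^4$. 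The weight $\sigma(\tau)^{1-\alpha}$ is essential to integrate the (possibly unbounded as $\tau\to 0^+$) $\|\nabla u(\cdot,\tau)\|_{L^2}^2$ when $\alpha<1$; the requirement $\alpha>\frac{1}{2}$ comes from closing $\Phi_3$, where one needs $\sigma^{2-\alpha}\in L^1_{\loc}$ together with an interpolation exponent strictly less than one.

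\textbf{Long-time integrability of $\rho-\rho_s$.} The short-time estimate $\int_0^1\|\rho-\rho_s\|_{L^4}^4\,d\tau$ follows directly from $\|\rho-\rho_s\|_{L^2\cap L^\infty}$. For $t>1$ the relevant object is, as in the introduction,
\begin{align*}
\mathcal{A}(t)=\IntL|\rho-\rho_s|^{10/3}\,dxd\tau.
\end{align*}
I would test the mass equation against $|\rho-\rho_s|^{4/3}\sgn(\rho-\rho_s)$, substitute $\divv u=(\mu+\lambda)^{-1}(\rho_s F+P(\rho)-P_s)$, and exploit the monotonicity $(P(\rho)-P(\rho_s))(\rho-\rho_s)\gtrsim|\rho-\rho_s|^2$ coming from \eqref{condition on P} and the pointwise bound on $\rho$. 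This produces a Gronwall-type inequality bounding $\mathcal{A}(t)$ by $\int_1^t\|F\|_{L^{10/3}}^{10/3}$ plus terms already controlled by $\Phi_1(t)$; the $L^{10/3}$-norm of $F$ is then interpolated between $\|F\|_{L^2}$ (controlled by the energy) and $\|\nabla F\|_{L^2}$ (controlled by $\|\dot u\|_{L^2}$ via elliptic regularity). This closes the long-time quartic term and feeds back into the $\Phi_2$, $\Phi_3$ estimates.

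\textbf{Closing the pointwise bound on $\rho$; main obstacle.} Along a particle trajectory $\dot X=u(X,t)$, the mass equation yields
\begin{align*}
\frac{d}{dt}\log\rho=-\divv u=-\frac{1}{\mu+\lambda}\bigl(\rho_sF+P(\rho)-P_s\bigr)\circ X.
\end{align*}
Since $P'>0$, the term $-P(\rho)$ provides a damping that drives $\rho$ back toward $\rho_s$, while the forcing is $O(\|F\|_{L^\infty}+\|P_s\|_{L^\infty})$. I would control $\int_0^t\|F(\cdot,\tau)\|_{L^\infty}d\tau\le MC_0^\theta$ by interpolating $\|\nabla F\|_{L^2}$ with $\|\nabla F\|_{L^4}$ and invoking Morrey embedding, using the weighted bounds from $\Phi_2,\Phi_3$. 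Smallness of $C_0$ then yields $\frac{1}{2}\rho_2\le\rho\le 2\rho_1$, strictly improving the bootstrap assumption. The principal obstacle is the absence of any smallness assumption on $\|\rho-\rho_s\|_{L^\infty}$: the long-time control of $\mathcal{A}(t)$ must be extracted purely from the coupling between the mass equation and $F$, and the $\alpha$-dependent weights must be juggled so that the final power $C_0^\theta$ is \emph{strictly} positive and smaller than the bootstrap constant, which is exactly where the constraint $\alpha\in(\tfrac12,1]$ and the viscosity restriction \eqref{condition on q and viscosity} are used.
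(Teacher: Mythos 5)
There is a genuine gap in your treatment of $\Phi_2$, and it sits exactly at the point that makes the theorem nontrivial for fractional $\alpha$. You propose to obtain the weighted bound $\sup_\tau\sigma(\tau)^{1-\alpha}\|\nabla u\|_{L^2}^2+\int\sigma^{1-\alpha}\|\dot u\|_{L^2}^2$ by multiplying the momentum equation directly by $\sigma(\tau)^{1-\alpha}\dot u$. When you integrate the viscous term by parts in time, the weight produces the commutator term
\begin{equation*}
(1-\alpha)\int_0^{\min(t,1)}\tau^{-\alpha}\|\nabla u(\cdot,\tau)\|_{L^2}^2\,d\tau ,
\end{equation*}
and for $\alpha\in(0,1)$ this is not controlled by anything available: the energy bound gives only $\int_0^1\|\nabla u\|_{L^2}^2\,d\tau<\infty$, and the sharp smoothing rate for $u_0\in H^\alpha$ is precisely $\|\nabla u(\cdot,\tau)\|_{L^2}^2\sim\tau^{-(1-\alpha)}$, so the integrand behaves like $\tau^{-1}$ and the term is borderline divergent (no interpolation between $\sup\tau^{1-\alpha}\|\nabla u\|_{L^2}^2$ and $\int\|\nabla u\|_{L^2}^2$ rescues it). The paper circumvents this by an entirely different device, which is the technical heart of the $\alpha$-dependence: it splits $u=w_1+w_2$ where $\mathcal Lw_1=0$ with data $w_{10}$ and $\mathcal Lw_2$ carries the pressure forcing with zero data, proves the two endpoint estimates with integer weights $\tau^0$ (data in $H^1$) and $\tau^1$ (data in $L^2$) — for which the weight-derivative term is harmless — and then obtains the fractional weight $\tau^{1-\alpha}$ for $w_{10}\in H^\alpha$ by Riesz--Thorin interpolation applied to the \emph{linear} solution operator $w_{10}\mapsto w_1$. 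Without this (or an equivalent real-interpolation argument), your $\Phi_2$ estimate does not close, and $\Phi_3$ and everything downstream collapses with it. (Your $\Phi_3$ step itself is fine: there the weight-derivative term is $(2-\alpha)\int\tau^{1-\alpha}|\dot u|^2$, which $\Phi_2$ does control.)

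Two smaller discrepancies. First, for $\mathcal A(t)$ the multiplier must be $\sgn(\rho-\rho_s)|\rho-\rho_s|^{7/3}$, not $|\rho-\rho_s|^{4/3}\sgn(\rho-\rho_s)$, if the coercive term $\rho(P-P_s)\cdot(\text{multiplier})\gtrsim|\rho-\rho_s|^{10/3}$ is to come out; otherwise the idea (rewrite $\divv u$ via $F$ in the mass equation and use monotonicity of $P$) matches the paper. Second, for the pointwise density bound the paper does not estimate $\int_0^t\|F\|_{L^\infty}d\tau$ by Morrey interpolation as you do; it integrates $\rho_s F$ along trajectories via the Poisson representation \eqref{elliptic eqn for F} and performs Hoff's integration by parts in time on $\Gamma_{x_j}*(\rho\dot u^j)$, which is exactly where the auxiliary $L^q$ estimate on $u$ for $q>6$ (Lemma~\ref{Lq bound on u lem}) and hence the viscosity condition \eqref{condition on q and viscosity} enter, together with the H\"older-$\frac12$ seminorm of $u$. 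Your direct route does appear to converge for $\alpha>\frac12$ (the time singularity $\tau^{-2(2-\alpha)/3}$ is integrable exactly when $\alpha>\frac12$), so this is a legitimate alternative, but you should then either explain where $q>6$ and \eqref{condition on q and viscosity} are actually used in your argument or acknowledge that they become superfluous for this step.
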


Theorem~\ref{a priori bounds} will be proved in a sequence of lemmas. We first establish the bound \eqref{bound on Phi and H} {\it under the assumption} that the pointwise bounds in \eqref{pointwise bound on rho} hold for the density $\rho$. In order to control the space-time integral of $\rho-\rho_s$, we introduce different methods in {\it small-time} and {\it large-time} regimes, which will be given in Subsection~\ref{small time estimates} and Subsection~\ref{large time estimates} respectively. In Subsection~\ref{Pointwise bound on rho subsection}, we then close the estimates of Theorem~\ref{a priori bounds} by deriving pointwise bounds \eqref{pointwise bound on rho} for $\rho$ under the smallness assumption on $C_0$. This gives an noncontingent estimate for $(\rho,u)$ and thereby proving Theorem~\ref{a priori bounds}.

Unless otherwise specified, throughout this paper, $C$ will denote a generic positive constant which depends on the same quantities as the constant $M$ in the statement of Theorem~\ref{a priori bounds} but independent of time $t$ and the regularity of initial data. And for simplicity, we write $P=P(\rho)$ and $P_s=P(\rho_s)$, etc., without further referring.

We will make repeated use of the following Gagliardo-Nirenberg type inequalities and Sobolev imbeddings, the proof can be found in \cite{ziemer}:

\begin{prop}
For $p\in[2,6]$ and $r\in(3,\infty)$, there exists some generic constants $C>0$ and $C(r)>0$ such that for any $g\in H^1$ and $h\in W^{1,r}$, we have
\begin{align}
\|g\|^p_{L^p}&\le C\|g\|^\frac{6-p}{2}_{L^2}\|\nabla g\|^\frac{3p-6}{2}_{L^2},\label{GN1}\\
\langle h\rangle^\alpha&\le C(r)\|\nabla h\|_{L^r(\R^3)},\label{holder bound general}
\end{align}
where $\alpha=1-\frac{3}{r}$.
\end{prop}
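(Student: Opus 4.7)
The statement collects two standard inequalities on $\R^3$, so the plan is to give short, self-contained derivations rather than invoking the reference as a black box.

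For the Gagliardo–Nirenberg bound \eqref{GN1}, the plan is to combine Sobolev embedding with Hölder interpolation. First I would record the critical Sobolev inequality $\|g\|_{L^6(\R^3)} \le C\|\nabla g\|_{L^2(\R^3)}$, which holds for $g \in H^1(\R^3)$ and follows, e.g., from the standard proof via the fundamental theorem of calculus in each coordinate direction together with a tensor-product Hölder argument, or equivalently from the Hardy–Littlewood–Sobolev inequality applied to $g = \Gamma * \Delta g$. Given this, I would write, for $p \in [2,6]$,
\begin{equation*}
\|g\|_{L^p}^p = \int_{\R^3} |g|^{\theta p} |g|^{(1-\theta)p}\,dx
\end{equation*}
and apply Hölder's inequality with the interpolation exponent $\theta$ chosen so that $\theta p = 2$ in one factor (put in $L^{2/(\theta p)} = L^{1/\theta}$-like spaces with conjugate arrangement) and $(1-\theta)p = 6$ in the other. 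Solving the system $\theta p + (1-\theta)p = p$ with those target exponents gives $\theta p/2 + (1-\theta)p/6 = 1$, i.e., $\theta = (6-p)/(2p)$, whence $\|g\|_{L^p} \le \|g\|_{L^2}^\theta \|g\|_{L^6}^{1-\theta}$. Raising to the $p$-th power and using the Sobolev bound yields \eqref{GN1}. The endpoints $p=2$ and $p=6$ are trivial and agree with the formula.

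For the Hölder inequality \eqref{holder bound general}, the plan is to prove the classical Morrey embedding $W^{1,r}(\R^3) \hookrightarrow C^{0,1-3/r}(\R^3)$ directly. I would fix two points $x_1, x_2 \in \R^3$, set $r_0 = |x_1 - x_2|$, and estimate $|h(x_1) - h(x_2)|$ by the mean value of $h$ on the ball $B = B(x_1,r_0) \cap B(x_2,r_0)$, controlling $|h(x_i) - \fint_B h|$ by Poincaré-type inequalities on balls. The key ingredient is the pointwise estimate
\begin{equation*}
\Bigl| h(x) - \fint_{B(x,s)} h \Bigr| \le C \int_{B(x,s)} \frac{|\nabla h(y)|}{|x-y|^{2}}\,dy,
\end{equation*}
valid for a.e.\ $x$ (this is a consequence of the fundamental theorem of calculus integrated in polar coordinates on $\R^3$). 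Applying Hölder's inequality to the right-hand side with exponents $r$ and $r' = r/(r-1)$ and using that $r > 3$ makes the kernel $|x-y|^{-2}$ lie in $L^{r'}(B(x,s))$ with norm proportional to $s^{1-3/r}$, I obtain $|h(x) - \fint_{B(x,s)} h| \le C(r)\, s^{1-3/r}\, \|\nabla h\|_{L^r(\R^3)}$. Choosing $s = r_0$ and combining the bound at $x_1$ and $x_2$ yields $|h(x_1) - h(x_2)| \le C(r)\, |x_1-x_2|^{1-3/r}\, \|\nabla h\|_{L^r}$, which is exactly \eqref{holder bound general}.

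Both arguments are routine; the only minor obstacle is the bookkeeping of the interpolation exponent in the Gagliardo–Nirenberg step and the verification that the kernel $|x-y|^{-2}$ is indeed $L^{r'}$-integrable on a ball precisely when $r > 3$, i.e., when $2r' < 3$. Everything else is standard, and since this proposition is only a collection of tools cited repeatedly later in the paper, a brief proof or an explicit citation to \cite{ziemer} suffices.
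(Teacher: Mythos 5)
Your proposal is correct. The paper itself gives no argument for this proposition — it only cites Ziemer's book — and your two derivations are precisely the standard ones that such a reference contains: for \eqref{GN1}, $L^p$-interpolation between $L^2$ and $L^6$ followed by the critical Sobolev inequality $\|g\|_{L^6}\le C\|\nabla g\|_{L^2}$ on $\R^3$ (your exponent bookkeeping checks out: $\theta=\frac{6-p}{2p}$ gives $\theta p=\frac{6-p}{2}$ and $(1-\theta)p=\frac{3p-6}{2}$); for \eqref{holder bound general}, the Morrey embedding via the pointwise bound $|h(x)-\fint_{B(x,s)}h|\le C\int_{B(x,s)}|\nabla h(y)|\,|x-y|^{-2}dy$ and H\"older with $r'<\frac{3}{2}$. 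Nothing further is needed.
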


We begin with the following energy balance law which gives bounds on $\Phi_1(t)$ for all $t\in[0,T]$.

\begin{lem}\label{L2 estimate lem}
Assume that the hypotheses and notations of Theorem~\ref{a priori bounds} are in force. Suppose that $\rho$ satisfies the pointwise bounds \eqref{pointwise bound on rho}. Then for any $t\in[0,T]$,
\begin{align}\label{L2 energy bound}
\Phi_1(t)&\le CC_0.
\end{align}
\end{lem}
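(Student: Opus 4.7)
The plan is to derive a closed energy identity by combining the standard kinetic energy balance with a suitable \emph{relative potential energy} adapted to the steady state $\rho_s$, and then read off the bound on $\Phi_1$ from the pointwise control \eqref{pointwise bound on rho} of $\rho$.

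First, motivated by the identity $\nabla\Psi = P'(\rho_s)\nabla\rho_s/\rho_s$ coming from \eqref{eqn for steady state}, I would introduce a function $G$ with $G''(\rho)=P'(\rho)/\rho$ on $(0,\infty)$ and set
\begin{align*}
\Pi(\rho,\rho_s)=G(\rho)-G(\rho_s)-G'(\rho_s)(\rho-\rho_s).
\end{align*}
Since $G$ is convex (by \eqref{condition on P}) and the density is bounded away from $0$ and $\infty$ by \eqref{pointwise bound on rho}, Taylor's theorem gives uniform constants $c_1,c_2>0$ such that $c_1(\rho-\rho_s)^2\le \Pi(\rho,\rho_s)\le c_2(\rho-\rho_s)^2$ throughout the range $[\tfrac{1}{2}\rho_2,2\rho_1]\times[\underline\rho,\bar\rho]$.

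Next, I would compute $\partial_t\Pi$ using the continuity equation $\rho_t=-\divv(\rho u)$ and time-independence of $\rho_s$. A direct calculation (moving one derivative onto $\rho u$, then using $\rho G''(\rho)\nabla\rho=\nabla P(\rho)$ and $G''(\rho_s)\nabla\rho_s=\nabla\Psi/\rho_s\cdot\rho_s=\nabla\Psi$ via the steady state equation) yields
\begin{align*}
\partial_t\Pi+\divv\bigl([G'(\rho)-G'(\rho_s)]\rho u\bigr)=u\cdot\nabla P(\rho)-\rho u\cdot\nabla\Psi.
\end{align*}
Integrating over $\R^3$ (the divergence and flux terms vanish by the integrability hypotheses on smooth solutions, and by the fact that $G'(\rho)-G'(\rho_s)$ and $\rho-\rho_s$ decay at infinity since $\Psi\to 0$), this gives
\begin{align*}
\frac{d}{dt}\int_{\R^3}\Pi(\rho,\rho_s)\,dx=\int_{\R^3}u\cdot\nabla P\,dx-\int_{\R^3}\rho u\cdot\nabla\Psi\,dx.
\end{align*}

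In parallel, I would dot the momentum equation \eqref{NS}$_2$ with $u$ and integrate by parts to get the usual kinetic identity
\begin{align*}
\frac{1}{2}\frac{d}{dt}\int_{\R^3}\rho|u|^2\,dx+\int_{\R^3}u\cdot\nabla P\,dx+\mu\int_{\R^3}|\nabla u|^2\,dx+\lambda\int_{\R^3}(\divv u)^2\,dx=\int_{\R^3}\rho f\cdot u\,dx,
\end{align*}
and adding the relative-potential identity to this cancels both $\int u\cdot\nabla P$ and $\int\rho\nabla\Psi\cdot u$, yielding the clean balance
\begin{align*}
\frac{d}{dt}\int_{\R^3}\Bigl[\tfrac{1}{2}\rho|u|^2+\Pi(\rho,\rho_s)\Bigr]dx+\int_{\R^3}\bigl[\mu|\nabla u|^2+\lambda(\divv u)^2\bigr]dx=0.
\end{align*}

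Finally, I would integrate from $0$ to $t$, drop the nonnegative pieces on the right, and use: (i) the quadratic bound $\Pi\simeq(\rho-\rho_s)^2$ together with $\rho\ge\tfrac{1}{2}\rho_2$ on the left; (ii) the initial-data bounds $\int\rho_0|u_0|^2\le 2\rho_1\|u_0\|_{L^2}^2\le C\|u_0\|_{H^\alpha}^2$ and $\int\Pi(\rho_0,\rho_s)\le C\|\rho_0-\rho_s\|_{L^2}^2$ on the right. Taking the supremum over $\tau\le t$ gives exactly $\Phi_1(t)\le CC_0$. The only mild technical step is justifying the vanishing of boundary/flux terms in the integration by parts, which for the smooth solutions considered in Theorem~\ref{a priori bounds} is immediate from the decay of $\rho-\rho_s$ and $\Psi$; there is no real obstacle beyond setting up $\Pi$ correctly so that the potential-force term cancels.
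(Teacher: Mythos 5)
Your proposal is correct and follows essentially the same route as the paper: the paper's function $G(\rho)=\int_{\rho_s}^{\rho}\int_{\rho_s}^{r}\tau^{-1}P'(\tau)\,d\tau\,dr$ is exactly your relative potential $\Pi(\rho,\rho_s)$, and the paper likewise cancels the force term against the pressure term via the steady-state relation and the mass equation. The only difference is organizational — the paper folds $\rho f=\rho\rho_s^{-1}\nabla P_s$ into the momentum equation before testing with $u$ and then converts the combined term into $\frac{d}{dt}\int G(\rho)\,dx$, whereas you derive a separate evolution identity for $\Pi$ and add it to the kinetic balance; the two computations are algebraically identical.
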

\begin{proof}
Using $\nabla\Psi=-\rho_s^{-1}\nabla P_s$ on the momentum equation \eqref{NS}$_2$, we get
\begin{equation}\label{momentum eqn rewritten}
\rho\dot u+\rho(\rho^{-1}\nabla P-\rho_s^{-1}\nabla  P_s)-\mu\Delta u-\lambda\nabla(\divv(u))=0.
\end{equation}
Multiply \eqref{momentum eqn rewritten} by $u$ and integrate to obtain that for $t\in[0,T]$,
\begin{align}\label{energy balance law}
\left.\int_{\R^3}{\textstyle\frac{1}{2}}\rho|u|^{2}dx\right |_{0}^{t}dx + \int_{0}^{t}\int_{\R^3} &\rho u(\rho^{-1}\nabla P-\rho_s^{-1}\nabla  P_s)dxd\tau
\notag\\ +& \int_{0}^{t}\int_{\R^3}\left[\mu|\nabla u|^{2} + \lambda(\divv(u))^{2}\right]dxd\tau=0,
\end{align}
where the divergence of a matrix is taken row-wise.
Next we define 
\begin{align*}
G(\rho)=\int_{\rho_s}^{\rho}\int_{\rho_s}^{r}\tau^{-1}P'(\tau)d\tau dr,
\end{align*}
then since $\rho$ satisfies the bounds in \eqref{pointwise bound on rho}, we have
\begin{align*}
C^{-1}|\rho-\rho_s|^2\le G(\rho)\le C|\rho-\rho_s|^2.
\end{align*}
Using the mass equation, the second term on the left side of \eqref{energy balance law} can be written as follows
\begin{align*}
\int_{0}^{t}\int_{\R^3}&\rho u(\rho^{-1}\nabla P-\rho_s^{-1}\nabla  P_s)dxd\tau\\
&=\int_0^t\int_{\R^3}\rho u\cdot\nabla\left(\int_{\rho_s}^{\rho}r^{-1}P'(r)dr\right)dxd\tau\\
&=\int_0^t\int_{\R^3}\rho_t\left(\int_{\rho_s}^{\rho}r^{-1}P'(r)dr\right)dxd\tau=\int_0^t\int_{\R^3}G(\rho)_s dxd\tau=\int_{\R^3}G(\rho)dx\Big|_0^t.
\end{align*}
Putting the above into \eqref{energy balance law}, the estimate \eqref{L2 energy bound} follows.
\end{proof}

Next, we recall the following estimates on the effective viscous flux $F$ and vorticity $\omega$ for all $t\in[0,T]$. A proof can be found in \cite{LM11} using the Poisson equations \eqref{elliptic eqn for F} and \eqref{elliptic eqn for omega} and the Marcinkiewicz multiplier theorem.

\begin{lem}
Assume that $\rho$ satisfies the bounds in \eqref{pointwise bound on rho}. Then for $t\in[0,T]$ and $r_1,r_2\in(1,\infty)$,
\begin{align}\label{bound on F and omega in terms of u}
\|\nabla F\|_{L^{r_1}}+\|\nabla\omega\|_{L^{r_1}}&\le C(\|\dot{u}\|_{L^{r_1}}+||\nabla u||_{L^{r_1}}+\|(\rho-\trho)^2\|_{L^{r_1}}),\\
\label{bound on u in terms of F and omega}
||\nabla u||_{L^{r_2}}&\le C(|| F||_{L^{r_2}}+||\omega||_{L^{r_2}}+||(\rho-\rho_s)||_{L^{r_2}}).
\end{align} 
\end{lem}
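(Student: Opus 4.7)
The plan is to derive both estimates as direct applications of classical Calder\'on--Zygmund / Marcinkiewicz singular integral estimates to the Poisson equations already set down in the introduction, using the steady-state relation $\nabla P_s=\rho_s f$ and the uniform bounds $\underline{\rho}\le\rho_s\le\bar\rho$ from \eqref{bounds on rho s} together with the regularity $\rho_s-\rho_\infty\in H^2\cap W^{2,6}$, which guarantees that $\rho_s$, $\rho_s^{-1}$, $\nabla\rho_s$ and $f$ are all uniformly bounded on $\R^3$.

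For the first inequality, I would start from \eqref{elliptic eqn for F}--\eqref{elliptic eqn for omega} and use $\nabla P_s=\rho_s f$ to rewrite the source term as
\[
g:=\rho\dot u-\rho f+\nabla P_s=\rho\dot u-(\rho-\rho_s)f.
\]
Inverting the Laplacian and integrating by parts to move the divergence or curl onto the fundamental solution $\Gamma$ produces
\[
\nabla(\rho_s F)=(\nabla^2\Gamma)*g,\qquad \mu\,\nabla\omega=(\nabla^2\Gamma)*g,
\]
so the $L^{r_1}$-boundedness of the associated Calder\'on--Zygmund kernels (equivalently the Marcinkiewicz multiplier theorem applied to the corresponding Fourier multipliers) gives
\[
\|\nabla(\rho_s F)\|_{L^{r_1}}+\|\nabla\omega\|_{L^{r_1}}\le C\|g\|_{L^{r_1}}\le C\bigl(\|\dot u\|_{L^{r_1}}+\|\rho-\rho_s\|_{L^{r_1}}\bigr),
\]
since $\rho$ and $f$ are uniformly bounded. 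To pass from $\nabla(\rho_s F)$ to $\nabla F$, I would apply the product rule $\rho_s\nabla F=\nabla(\rho_s F)-F\nabla\rho_s$ and use $\rho_s^{-1}\in L^\infty$ and $\nabla\rho_s\in L^\infty$, which introduces the lower-order term $\|F\|_{L^{r_1}}$. The definition \eqref{def of F and omega} yields the pointwise bound $|F|\le C(|\nabla u|+|\rho-\rho_s|)$, so this term is absorbed into $\|\nabla u\|_{L^{r_1}}+\|\rho-\rho_s\|_{L^{r_1}}$. Because the pointwise bound \eqref{pointwise bound on rho} makes $\rho-\rho_s$ uniformly bounded, the stated form $\|(\rho-\rho_s)^2\|_{L^{r_1}}$ is equivalent to $\|\rho-\rho_s\|_{L^{r_1}}$ up to a constant depending on $\|\rho-\rho_s\|_{L^\infty}$.

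For the second inequality, I would start from the algebraic identity recorded in the introduction,
\[
(\mu+\lambda)\Delta u^j=(\rho_s F)_{x_j}+(\mu+\lambda)\omega^{j,k}_{x_k}+(P-P_s)_{x_j},
\]
which follows by combining $\Delta u^j=(\divv u)_{x_j}+\omega^{j,k}_{x_k}$ with the definition \eqref{def of F and omega}. Inverting the Laplacian expresses $\nabla u$ as a Calder\'on--Zygmund singular integral (second derivatives of the Newtonian potential) acting on $\rho_s F$, $\omega$ and $P-P_s$, so the usual $L^{r_2}$ estimate for $1<r_2<\infty$ gives
\[
\|\nabla u\|_{L^{r_2}}\le C\bigl(\|\rho_s F\|_{L^{r_2}}+\|\omega\|_{L^{r_2}}+\|P-P_s\|_{L^{r_2}}\bigr).
\]
Using $\rho_s\in L^\infty$ and the mean-value bound $|P-P_s|\le C|\rho-\rho_s|$ (valid by \eqref{condition on P} and the pointwise bound on $\rho$) then delivers the stated inequality.

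The only point requiring attention is the variable-coefficient nature of the $F$ equation: the multiplier theorem naturally controls $\nabla(\rho_s F)$ rather than $\nabla F$, and one must peel off $\rho_s$ by the product rule. This is harmless because $\rho_s,\rho_s^{-1},\nabla\rho_s$ are all uniformly bounded by the steady-state proposition preceding the lemma, and the resulting $\|F\|_{L^{r_1}}$ remainder is absorbed via the elementary bound $|F|\le C(|\nabla u|+|\rho-\rho_s|)$. Everything else is a routine consequence of the pointwise bound on $\rho$ and the hypotheses on $P$ and $f$.
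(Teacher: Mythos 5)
Your treatment of the second estimate \eqref{bound on u in terms of F and omega} is correct and is exactly the intended argument (the paper itself only cites \cite{LM11} for this lemma). The gap is in the first estimate. Your Calder\'on--Zygmund argument applied to \eqref{elliptic eqn for F} with source $g=\rho\dot u-(\rho-\rho_s)f$ yields
\[
\|\nabla(\rho_s F)\|_{L^{r_1}}\le C\bigl(\|\dot u\|_{L^{r_1}}+\|\rho-\rho_s\|_{L^{r_1}}\bigr),
\]
i.e.\ a bound that is \emph{linear} in $\rho-\rho_s$, and your attempt to convert this into the stated bound fails: the asserted ``equivalence'' of $\|(\rho-\rho_s)^2\|_{L^{r_1}}$ and $\|\rho-\rho_s\|_{L^{r_1}}$ only holds in one direction. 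From $|\rho-\rho_s|\le M$ one gets $\|(\rho-\rho_s)^2\|_{L^{r_1}}\le M\|\rho-\rho_s\|_{L^{r_1}}$, but there is no reverse inequality without a pointwise \emph{lower} bound on $|\rho-\rho_s|$. Since the lemma's right-hand side (with the square) is the smaller quantity, what you prove is strictly weaker than what is claimed. The same linear term reappears when you absorb the product-rule remainder $F\nabla\rho_s$ via $|F|\le C(|\nabla u|+|\rho-\rho_s|)$. This is not a cosmetic issue: the quadratic form is used essentially in Subsection~\ref{large time estimates}, where $\int_1^t\|\nabla F\|_{L^2}^2d\tau$ is controlled through $\mathcal A(t)=\int_1^t\intox|\rho-\rho_s|^{10/3}$ via $\int|\rho-\rho_s|^4\le\|\rho-\rho_s\|_{L^\infty}^{2/3}\int|\rho-\rho_s|^{10/3}$; with the linear version one would instead face $\int_1^t\|\rho-\rho_s\|_{L^2}^2d\tau\le CC_0t$, which grows in $t$ and destroys the uniform-in-time estimates.

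The quadratic term has to be produced structurally, not by interpolation. The mechanism is the Taylor expansion of the pressure about $\rho_s$ used in the rewriting \eqref{another form of momentum eqn 2}: writing $P-P_s=P'(\rho_s)(\rho-\rho_s)+(\rho-\rho_s)^2Q$ with $Q$ bounded (by \eqref{condition on P} and \eqref{pointwise bound on rho}) and using $\nabla P_s=P'(\rho_s)\nabla\rho_s$, the source $-(\rho-\rho_s)f=-(\rho-\rho_s)\rho_s^{-1}P'(\rho_s)\nabla\rho_s$ combines with the commutator terms from $(\rho_sF)_{x_j}=\rho_sF_{x_j}+F(\rho_s)_{x_j}$ so that the momentum equation becomes the pointwise identity
\[
\rho_s F_{x_j}+\mu\,\omega^{j,k}_{x_k}=\rho\dot u^j-(\mu+\lambda)\rho_s^{-1}(\divv u)(\rho_s)_{x_j}+(\rho-\rho_s)^2Q\rho_s^{-1}(\rho_s)_{x_j},
\]
in which the $F\nabla\rho_s$ contributions cancel and the right-hand side is controlled exactly by $\|\dot u\|_{L^{r_1}}+\|\nabla u\|_{L^{r_1}}+\|(\rho-\rho_s)^2\|_{L^{r_1}}$ (this is also the true origin of the $\|\nabla u\|_{L^{r_1}}$ term, via $(\divv u)\nabla\rho_s$, not the absorption of $\|F\|_{L^{r_1}}$). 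Taking the divergence, resp.\ the curl, of this identity and applying the $L^{r_1}$ theory for the divergence-form operator $\divv(\rho_s\nabla\cdot)$ (using that $\rho_s$ is bounded above and below and $\rho_s-\rho_\infty\in W^{2,6}$) then gives \eqref{bound on F and omega in terms of u} as stated. You should replace the direct use of \eqref{elliptic eqn for F} by this rewriting.
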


To proceed further, we have to obtain higher order estimates on $u$. In order to obtain better estimates on $\rho$, we subdivide the estimates into two cases namely $t\in[0,1]$ and $t\in(1,T]$. These will be illustrated in Subsection~\ref{small time estimates} and Subsection~\ref{large time estimates} as follows:

\subsection{Estimates on $u$ and $\rho$ for $t\in[0,1]$}\label{small time estimates} 

In this subsection, we aim at estimating $\Phi_1(t)$, $\Phi_2(t)$ and $\Phi_3(t)$ when $t\in[0,1]$. We first derive bounds on $u$ in $L^\infty([0,t];H^1(\R^3))$ under some smallness conditions on $C_0$, which gives a bound on $\Phi_2(t)$.

\begin{lem}\label{H1 estimates}
Assume that $\rho$ satisfies \eqref{pointwise bound on rho} and $C_0\ll1$. For $t\in[0,1]$ and $\alpha\in[0,1]$, we have
\begin{align}\label{H1 bound}
\sup_{0\le \tau\le t}\tau^{1-\tau}\intox|\nabla u|^2dx+\int_0^t\intox \tau^{1-\alpha}|\dot{u}|^2dxd\tau\le CC_0.
\end{align}
\end{lem}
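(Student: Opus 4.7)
The plan is to run a Hoff-type multiplier argument: pair the reformulated momentum equation \eqref{momentum eqn rewritten} with $\dot u$ and integrate over $\R^3$. The viscous terms $\intox(\mu\Delta u + \lambda\nabla\divv u)\cdot\dot u\,dx$, after writing $\dot u = u_t + u\cdot\nabla u$ and integrating by parts in space, yield $-\tfrac{1}{2}\tfrac{d}{d\tau}(\mu\|\nabla u\|_{L^2}^2 + \lambda\|\divv u\|_{L^2}^2)$ together with cubic remainders pointwise bounded by $C|\nabla u|^3$. For the pressure contribution $\intox\rho(\rho^{-1}\nabla P - \rho_s^{-1}\nabla P_s)\cdot\dot u\,dx$, I would rewrite the bracketed factor as $\nabla\bigl(\int_{\rho_s}^{\rho} r^{-1}P'(r)\,dr\bigr)$ (the same device used in Lemma~\ref{L2 estimate lem}), integrate by parts against $\dot u$, and invoke the continuity equation \eqref{NS}$_1$ to convert the $\rho_t$-factors into a total time derivative of a quantity $\mathcal H(\rho)\sim\|\rho-\rho_s\|_{L^2}^2$, modulo quadratic remainders in $\nabla u$ and $\rho-\rho_s$.

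After these manipulations one arrives at a differential inequality of the form
\[
\frac{d}{d\tau}\bigl(\tfrac{\mu}{2}\|\nabla u\|_{L^2}^2 + \tfrac{\lambda}{2}\|\divv u\|_{L^2}^2 + \mathcal H(\rho)\bigr) + \intox\rho|\dot u|^2\,dx \le C\|\nabla u\|_{L^3}^3 + \text{lower-order terms}.
\]
To absorb the cubic term I would apply Gagliardo-Nirenberg \eqref{GN1}, giving $\|\nabla u\|_{L^3}^3 \le C\|\nabla u\|_{L^2}^{3/2}\|\nabla u\|_{L^6}^{3/2}$; then \eqref{bound on u in terms of F and omega} replaces $\|\nabla u\|_{L^6}$ by $\|F\|_{L^6}+\|\omega\|_{L^6}+\|\rho-\rho_s\|_{L^6}$; Sobolev embedding with \eqref{bound on F and omega in terms of u} bounds the flux and vorticity parts by $\|\dot u\|_{L^2}$ plus $\|\nabla u\|_{L^2}$ and powers of $\|\rho-\rho_s\|_{L^4}$. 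A Young inequality splits the product so that a small fraction of $\|\dot u\|_{L^2}^2$ is absorbed into the left side; the remainder is quadratic in quantities already controlled by Lemma~\ref{L2 estimate lem}, with a prefactor that is small thanks to $C_0\ll1$.

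Finally, to install the weight $\sigma(\tau)^{1-\alpha}=\tau^{1-\alpha}$ on $[0,1]$, I would multiply the differential inequality by $\tau^{1-\alpha}$ and integrate on $[0,t]$. Integration by parts in time reproduces the two quantities on the left side of \eqref{H1 bound}, at the cost of a boundary contribution $(1-\alpha)\int_0^t\tau^{-\alpha}\|\nabla u\|_{L^2}^2\,d\tau$ on the right. The hard part will be controlling this boundary term when $\alpha\in(\tfrac{1}{2},1)$: the $L^2$-energy bound from Lemma~\ref{L2 estimate lem} alone does not give an integrable $\tau^{-\alpha}$-weighted integrand near $\tau=0$. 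My plan is to combine the $H^\alpha$ regularity of $u_0$ with the parabolic smoothing effect of the viscous operator so that, to leading order, $\tau^{1-\alpha}\|\nabla u(\tau)\|_{L^2}^2\lesssim\|u_0\|_{H^\alpha}^2$ via a Fourier-multiplier/interpolation estimate, and the nonlinear corrections, being quadratic in already-controlled quantities with prefactor $C_0^{1/2}$ on the short interval $t\le 1$, are absorbed by the left side after Gronwall. At the endpoint $\alpha=1$ the boundary term disappears entirely and the argument reduces to the standard $H^1$ estimate using $\|\nabla u_0\|_{L^2}^2\le C_0$ directly.
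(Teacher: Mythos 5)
Your energy-identity setup (pairing \eqref{momentum eqn rewritten} with $\dot u$, handling the pressure via the antiderivative device from Lemma~\ref{L2 estimate lem}, and absorbing the cubic term through $F$, $\omega$ and \eqref{GN1}) is consistent with the standard Hoff-type machinery and with what the paper does inside each endpoint estimate. The genuine gap is in the final step, which is the whole point of the lemma. You correctly identify that multiplying the differential inequality by $\tau^{1-\alpha}$ produces the boundary term $(1-\alpha)\int_0^t\tau^{-\alpha}\|\nabla u\|_{L^2}^2\,d\tau$, which the $L^2$ energy bound cannot control for $\alpha<1$; but your proposed fix --- a ``Fourier-multiplier/interpolation estimate'' giving $\tau^{1-\alpha}\|\nabla u(\cdot,\tau)\|_{L^2}^2\lesssim\|u_0\|_{H^\alpha}^2$ to leading order --- does not go through as stated. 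A Fourier-multiplier argument needs a constant-coefficient principal part, whereas here the time-derivative term carries the variable, merely bounded coefficient $\rho$ (and $\rho_s$ is genuinely non-constant), an $O(1)$ perturbation that cannot be shifted to the right-hand side without destroying the smoothing. Nor can you interpolate in $\alpha$ directly on the map $u_0\mapsto u$, because that solution map is nonlinear.

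The paper's resolution is designed precisely to circumvent both obstacles. It introduces the operator $\mathcal{L}$ of \eqref{eqn for w and L}, which is \emph{linear in $w$} for the given coefficients $(\rho,u)$, and splits $u=w_1+w_2$ with $\mathcal{L}w_1=0$, $w_1(\cdot,0)=u_0$, and $\mathcal{L}w_2=-\rho(\rho^{-1}\nabla P-\rho_s^{-1}\nabla P_s)$, $w_2(\cdot,0)=0$. It then proves the two endpoint estimates for $w_1$ by exactly the energy method you describe, with multipliers $\tau^k\dot w_1$ for $k=0$ and $k=1$: the $k=0$ case gives \eqref{H1 bound on w1 smooth} (no time weight, but requiring $\|w_{10}\|_{H^1}$), and the $k=1$ case gives \eqref{H1 bound on w1 not smooth} (weight $\tau$, requiring only $\|w_{10}\|_{L^2}$). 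Riesz--Thorin interpolation applied to the \emph{linear} solution operator $w_{10}\mapsto w_1$ then yields the $\tau^{1-\alpha}$-weighted bound in terms of $\|u_0\|_{H^\alpha}^2$, while the inhomogeneous piece $w_2$ is estimated separately by $CC_0$ in \eqref{H1 bound on w2}. To salvage your outline you must insert this decomposition (or an equivalent linearization) before interpolating; without it the fractional smoothing rate is unjustified.
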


\begin{proof}
We apply the interpolation techniques given by Hoff \cite{hoff02} or Suen \cite{suen20b}. We define differential operators $\mathcal{L}$ acting on functions $w:\R^3\times[0,\infty)\rightarrow\R^3$ by
\begin{align*}
(\mathcal{L} w)^j&=(\rho w^j)_t+\divv(\rho w^j u)-(\mu \Delta u^j +
\lambda \, \divv(u_{x_j})).
\end{align*}
Then we define $w_1$ and $w_2$ by
\begin{align}\label{eqn for w and L}
\left\{ \begin{array}{l}
\mathcal{L} w_1=0,\,\,\,w_1(x,0)=w_{10}(x),\\
\mathcal{L} w_2=-\rho \Big(\rho^{-1}\nabla P-\rho_s^{-1}\nabla P_s\Big),\,\,\,w_2(x,0)=0
\end{array}\right.
\end{align}
for a given $w_{10}$, and if $w_{10}=u_0$, then $w_1+w_2=u$. Following the proof of the energy balance law for the bound \eqref{L2 energy bound}, we readily have
\begin{align}\label{L2 estimate on w1}
&\sup_{0\le \tau\le t}\intox|w_1(x,\tau)|^2dx+\intoxst|\nabla w_1|^2dxd\tau\le C\intox|w_{10}|^2dx,
\end{align}
as well as
\begin{align}\label{L2 estimate on w2}
\sup_{0\le \tau\le t}\intox|w_2(x,\tau)|^2dx&+\intoxst|\nabla w_2|^2dxd\tau\\
&\le Ct\sup_{0\le \tau\le t}\|(\rho-\rho_s)(\cdot,\tau)\|_{L^2}^2.\notag
\end{align}
On the other hand, for $k=0,1$, we multiply equations \eqref{eqn for w and L} for $w_1$ and $w_2$ by $\tau^k\dot{w}_1$ and $\tau^k\dot{w}_2$ respectively and integrate to obtain
\begin{align}\label{H1 estimate on w1}
&\tau^k\intox|\nabla w_1(x,\tau)|^2dx\Big|_{\tau=0}^{\tau=t}+\intoxst\tau^k|\dot{w_1}|^2dxd\tau\\
&\le C\intoxst k\tau^{k-1}|\nabla w_1|^2dxd\tau+C\intoxst\tau^{\frac{3k}{2}}(|\nabla w_1|^3+|\nabla u|^3)dxd\tau,\notag
\end{align}
and
\begin{align}\label{H1 estimate on w2}
&\tau^k\intox|\nabla w_2(x,\tau)|^2dx\Big|_{\tau=0}^{\tau=t}+\intoxst\tau^k|\dot{w_2}|^2dxd\tau\notag\\
&\le C\Big|\intox(P-P_s)\divv (w_2)(x,\tau)dx\Big|_{\tau=0}^{\tau=t}\Big|+C\intoxst\tau^k|\rho-\rho_s||\dot{w_2}|dxd\tau\\
&\qquad+C\intoxst\tau^{\frac{3k}{2}}(|\nabla w_2|^3+|\nabla u|^3)dxd\tau.\notag
\end{align}
The term $\dis C\intoxst k\tau^{k-1}|\nabla w_1|^2dxd\tau$ can be bounded by $C\|w_{10}\|_{L^2}^2$ with the help of \eqref{L2 estimate on w1}, and using \eqref{L2 estimate on w1}-\eqref{L2 estimate on w2}, the term involving $P$ can be bounded by
\begin{align*}
&C\Big|\intox(P-P_s)\divv (w_2)(x,\tau)dx\Big|_{\tau=0}^{\tau=t}\Big|\\
&\le C\Big(\intox|\rho-\rho_s|^2(x,t)dx\Big)^\frac{1}{2}\Big(\intox|\nabla w_2|^2(x,t)dx\Big)^\frac{1}{2}\\
&\le CC_0^\frac{1}{2}\Big(\intox|\nabla w_2|^2(x,t)dx\Big)^\frac{1}{2} .
\end{align*}
Moreover, using the bound \eqref{L2 energy bound} on $\rho-\rho_s$ and the assumption that $t\in[0,1]$, we have
\begin{align*}
&C\intoxst\tau^k|\rho-\rho_s||\dot{w_2}|dxd\tau\\
&\le \Big(\sup_{0\le\tau t}\intox|\rho-\rho_s|^2dx\Big)^\frac{1}{2}\Big(\intoxst\tau^k|\dot{w_2}|^2dxd\tau\Big)^\frac{1}{2}\\
&\le CC_0^\frac{1}{2}\Big(\intoxst\tau^k|\dot{w_2}|^2dxd\tau\Big)^\frac{1}{2}.
\end{align*}
Next we consider the term $\dis \intoxst\tau^{\frac{3k}{2}}|\nabla u|^3dxd\tau$. By applying \eqref{bound on F and omega in terms of u}-\eqref{bound on u in terms of F and omega} and \eqref{L2 energy bound}, for $t\in[0,1]$, we have
\begin{align*}
\intoxst \tau^{\frac{3k}{2}}|\nabla u|^3dxd\tau&\le C\intoxst \tau^{\frac{3k}{2}}(|F|^3+|\omega|^3+|\rho-\rho_s|^3)dxd\tau\\
&\le C\int_0^t \tau^{\frac{3k}{2}}\Big(\intox|F|^2dx\Big)^\frac{3}{4}\Big(\intox|\nabla F|^2dx\Big)^\frac{3}{4}d\tau\\
&\qquad+C\int_0^t \tau^{\frac{3k}{2}}\Big(\intox|\omega|^2dx\Big)^\frac{3}{4}\Big(\intox|\nabla \omega|^2dx\Big)^\frac{3}{4}d\tau+CC_0\\
&\le CC_0^\frac{1}{4}\Big(\sup_{0\le \tau\le t}\tau^{k}\intox|\nabla u|^2dx\Big)^\frac{1}{2}\Big(\intoxst \tau^{k}|\dot{u}|^2dxd\tau\Big)^\frac{3}{4}\\
&\qquad+C_0^\frac{3}{4}\Big(\intoxst \tau^{k}|\dot{u}|^2dxd\tau\Big)^\frac{3}{4}+CC_0.
\end{align*}
Hence under suitable smallness conditions on $C_0$, the term $\dis \intoxst\tau^{\frac{3k}{2}}|\nabla u|^3dxd\tau$ can be absorbed into the left sides of \eqref{H1 estimate on w1} and \eqref{H1 estimate on w2}. Treating the terms $\dis \intoxst\tau^{\frac{3k}{2}}|\nabla w_1|^3dxd\tau$ and $\dis \intoxst\tau^{\frac{3k}{2}}|\nabla w_2|^3dxd\tau$ in a similar way, we can conclude that
\begin{align}
\sup_{0\le \tau\le t}\intox|\nabla w_1|^2dx+\intoxt |\dot{w_1}|^2dxd\tau\le C\|w_{10}\|^2_{H^1},\label{H1 bound on w1 smooth}\\
\sup_{0\le \tau\le t}\tau\intox|\nabla w_1|^2dx+\intoxt \tau|\dot{w_1}|^2dxd\tau\le C\|w_{10}\|^2_{L^2},\label{H1 bound on w1 not smooth}\\
\sup_{0\le \tau\le t}\intox|\nabla w_2|^2dx+\intoxt|\dot{w_2}|^2dxd\tau\le CC_0.\label{H1 bound on w2}
\end{align}
Since the operator $\mathcal{L}$ is linear, we can apply Riesz-Thorin interpolation to deduce from \eqref{H1 bound on w1 smooth}-\eqref{H1 bound on w1 not smooth} that for $t\in[0,1]$ and $\alpha\in[0,1]$
\begin{align}\label{Hs bound on w1}
\sup_{0\le \tau\le t}\tau^{1-\tau}\intox|\nabla u|^2dx+\int_0^t\intox \tau^{1-\alpha}|\dot{u}|^2dxd\tau\le C\|w_{10}\|^2_{H^s}.
\end{align}
By taking $w_{10}=u_0$ in \eqref{Hs bound on w1}, we conclude from \eqref{H1 bound on w2} and \eqref{Hs bound on w1} that the bound \eqref{H1 bound} holds for $t\in[0,1]$.
\end{proof}

\begin{rem}
The result \eqref{H1 bound} from Lemma~\ref{H1 estimates} holds for all $\alpha\in[0,1]$, in particular it holds for $\alpha\in(1,\frac{1}{2}]$.
\end{rem}

We further derive preliminary bounds for $\dot u$ in $L^\infty([0,t];L^2(\R^3))$ when $t\in[0,1]$. Notice that we require $\alpha\in(\frac{1}{2},1]$ on the time layer factor $\tau^{2-\alpha}$ due to the lack of integrability in time near $t=0$ for $\dot u$.

\begin{lem}\label{H2 estimates}
Assume that $\rho$ satisfies \eqref{pointwise bound on rho} and $C_0\ll1$. For $t\in[0,1]$ and $\alpha\in(\frac{1}{2},1]$, we have
\begin{align}\label{H2 bound}
\sup_{0\le \tau\le t}\tau^{2-s}\intox|\dot{u}|^2dx+\int_0^t\intox \tau^{2-\alpha}|\nabla\dot{u}|^2dxd\tau\le CC_0^{\theta},
\end{align}
for some $\theta>0$.
\end{lem}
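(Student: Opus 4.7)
The plan is to apply the material derivative $\frac{D}{Dt}=\partial_t+u\cdot\nabla$ to the momentum equation \eqref{momentum eqn rewritten}, multiply by $\sigma(\tau)^{2-\alpha}\dot u^j$, and integrate over $\R^3\times[0,t]$ in the spirit of Hoff's higher-order energy estimates (see \cite{hoff95, hoff02}). Using continuity, the left-hand side $\frac{D}{Dt}(\rho\dot u^j)$ becomes $\rho\ddot u^j$, so pairing with $\tau^{2-\alpha}\dot u^j$ produces the identity
\begin{equation*}
\frac{d}{d\tau}\Bigl(\tfrac{\tau^{2-\alpha}}{2}\!\intox\rho|\dot u|^2\,dx\Bigr)-\tfrac{2-\alpha}{2}\tau^{1-\alpha}\!\intox\rho|\dot u|^2\,dx =\tau^{2-\alpha}\!\intox\dot u^j\,\tfrac{D}{Dt}\bigl[\mu\Delta u^j+\lambda(\divv u)_{x_j}-P_{x_j}+\rho\,\Psi_{x_j}\bigr]dx.
\end{equation*}
After integrating by parts on the viscous terms (commuting $\partial_t$ and $u\cdot\nabla$ past the spatial derivatives, as in \cite{hoff02}), the RHS yields the principal dissipation $-\mu\tau^{2-\alpha}\intox|\nabla\dot u|^2$ plus commutator terms bounded by $C\tau^{2-\alpha}\intox|\nabla u|^2|\nabla\dot u|$ and $C\tau^{2-\alpha}\intox|\nabla u|^4$, together with pressure and force contributions involving $(\rho-\rho_s)$, $\nabla\rho_s$, and $\Psi$.

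Next I would integrate in $\tau$ from $0$ to $t$. The boundary term at $\tau=0$ vanishes because $2-\alpha>0$, and the main task is to absorb or control each error term. The commutator $\tau^{2-\alpha}\int|\nabla u|^2|\nabla\dot u|$ is absorbed via Cauchy--Schwarz into $\tfrac{\mu}{2}\tau^{2-\alpha}\int|\nabla\dot u|^2$ plus $C\tau^{2-\alpha}\int|\nabla u|^4$. For the quartic term I would use \eqref{bound on u in terms of F and omega} to write $|\nabla u|\le C(|F|+|\omega|+|\rho-\rho_s|)$ and then Gagliardo--Nirenberg \eqref{GN1} with \eqref{bound on F and omega in terms of u}:
\begin{equation*}
\intox|F|^4dx\le C\|F\|_{L^2}\|\nabla F\|_{L^2}^{3}\le C\|\nabla u\|_{L^2}\bigl(\|\dot u\|_{L^2}+\|\nabla u\|_{L^2}+\|\rho-\rho_s\|_{L^2}^2\bigr)^3,
\end{equation*}
and similarly for $\omega$. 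Inserting the weight $\tau^{2-\alpha}=\tau^{(1-\alpha)/2}\cdot\tau^{(3-\alpha)/2}$ and invoking Lemma~\ref{H1 estimates} together with the uniform $L^2$ bound on $\rho-\rho_s$ from Lemma~\ref{L2 estimate lem} gives
\begin{equation*}
\int_0^t\tau^{2-\alpha}\intox|\nabla u|^4dx\,d\tau\le CC_0^{\theta}+\eta\int_0^t\tau^{2-\alpha}\intox|\nabla\dot u|^2dx\,d\tau,
\end{equation*}
for arbitrarily small $\eta>0$, after using Young's inequality and the smallness of $C_0$.

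The remaining terms are more routine: $\int_0^t\tau^{1-\alpha}\int\rho|\dot u|^2$ is already dominated by $CC_0$ thanks to \eqref{H1 bound}; the pressure contribution produces $\tau^{2-\alpha}|\nabla P||\dot u|$ and $\tau^{2-\alpha}|P-P_s||\nabla\dot u|$ which I bound by Cauchy--Schwarz using $\|\rho-\rho_s\|_{L^2\cap L^\infty}\le CC_0^{\theta}$ and \eqref{H1 bound}; the force terms involve $\nabla\Psi$, $\nabla\rho_s$, which are controlled using $\Psi\in H^2$, \eqref{bounds on rho s} and the $L^2$ energy bound. Combining everything, choosing $\eta$ small and invoking $C_0\ll 1$ yields \eqref{H2 bound} with some $\theta>0$.

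The main obstacle is the interplay between the weight $\tau^{2-\alpha}$ (which has $2-\alpha\in[1,\tfrac{3}{2})$) and the boundary behaviour of $\dot u$ near $\tau=0$: because $u_0$ is only in $H^\alpha$, the unweighted $L^2$ norm of $\dot u$ is not finite at $\tau=0$, so the restriction $\alpha>\tfrac12$ is sharp in that it is precisely what makes $\int_0^1\tau^{2-\alpha}\int|\nabla u|^4$ tractable after applying Gagliardo--Nirenberg and balancing against the dissipation. Carefully choosing the interpolation exponents so that every factor of $\|\dot u\|_{L^2}$ or $\|\nabla u\|_{L^2}$ appearing in the quartic bound is compensated by a matching power of $\tau$ from Lemma~\ref{H1 estimates} is where the argument must be executed with the most care.
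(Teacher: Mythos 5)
Your proposal follows essentially the same route as the paper: apply the material derivative to the momentum equation, test against $\tau^{2-\alpha}\dot u$, and control the quartic term $\int\tau^{2-\alpha}|\nabla u|^4$ by passing to $F$, $\omega$ and $\rho-\rho_s$ via \eqref{bound on u in terms of F and omega}, then interpolating with \eqref{GN1} and \eqref{bound on F and omega in terms of u} and distributing the time weight using Lemmas~\ref{L2 estimate lem} and \ref{H1 estimates}. You also correctly identify the crux — that the exponent bookkeeping $\frac{1-\alpha}{2}+\frac{2-\alpha}{2}+(1-\alpha)\le 2-\alpha$ is exactly where $\alpha>\frac12$ is needed — which is the same balance the paper exploits before absorbing the result into the left-hand side via the smallness of $C_0$.
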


\begin{proof}
We use the method given in \cite{LM11} with some modifications. We rewrite the momentum equation \eqref{NS}$_2$ as follows.
\begin{align}\label{another form of momentum eqn}
\rho\dot u-\mu\Delta u-\lambda\nabla(\divv(u))=-\nabla(P-P_s)+(\rho-\rho_s)\nabla\Psi.
\end{align}
We apply the material derivative $\dis\frac{D}{Dt}(\cdot)$ on \eqref{another form of momentum eqn}, multiply by $\tau^{2-\alpha}$ and integrate to give
\begin{align}\label{eqn for dot u}
&\int_0^t\intox\tau^{2-\alpha}\rho\frac{D}{Dt}\frac{|\dot{u}|^2}{2}dxd\tau-\mu\intoxst\tau^{2-\alpha}\dot{u}^j(\Delta u^j_t+\divv(\Delta u^j u))dxd\tau\notag\\
&\qquad-\lambda\intoxst\tau^{2-\alpha}\dot{u}^j(\divv(u_{x_j})_t+\divv(u\divv(u_{x_j})))dxd\tau\notag\\
&=-\intoxst\tau^{2-\alpha}\dot{u}^j(P_{x_j t}+\divv(u(P-P_s)_{x_j}))dxd\tau\\
&\qquad+\intoxst\tau^{2-\alpha}\cdot{u}^j(\Psi_{x_j}\rho_t+\divv(u(\rho-\rho_s)\Psi_{x_j}))dxd\tau.\notag
\end{align}
The left side of \eqref{eqn for dot u} can be bounded below by the terms
\begin{align*}
&\frac{\tau^{2-\alpha}}{2}\intox\rho|\dot{u}|^2(x,\tau)dx+\mu\intoxst\tau^{2-\alpha}|\nabla\dot{u}|^2dxd\tau\\
&+\lambda\intoxst\tau^{2-\alpha}(\divv(\dot{u}))^2dxd\tau-C\intoxst\tau^{2-\alpha}|\nabla\dot{u}||\nabla u|^2dxd\tau\\
&-C\intoxst\tau^{1-\alpha}|\dot{u}|^2dxd\tau.
\end{align*}
For the first term on the right side of \eqref{eqn for dot u}, it can be bounded by
\begin{align*}
&\Big|\intoxst\tau^{2-\alpha}\dot{u}^j(P_{x_j t}+\divv(u(P-P_s)_{x_j}))dxd\tau\Big|\\
&\le C\Big(\intoxst\tau^{2-\alpha}|\nabla\dot{u}|^2dxd\tau\Big)^\frac{1}{2}\\
&\qquad\times\Big(\intoxst|\nabla u|^2dxd\tau+\int_0^t\Big(\intox|u|^6dx\Big)^\frac{1}{3}\Big(\intox|\nabla\rho_s|^3dx\Big)^\frac{2}{3}d\tau\Big)^\frac{1}{2}\\
&\le C\Big(\intoxst\tau^{2-\alpha}|\nabla\dot{u}|^2dxd\tau\Big)^\frac{1}{2}\Big(\intoxst|\nabla u|^2dxd\tau\Big)^\frac{1}{2},
\end{align*}
and the second term on the right side of \eqref{eqn for dot u} is bounded by
\begin{align*}
&\Big|\intoxst\tau^{2-\alpha}\cdot{u}^j(\Psi_{x_j}\rho_t+\divv(u(\rho-\rho_s)\Psi_{x_j}))dxd\tau\Big|\\
&\le C\Big(\intoxst\tau^{2-\alpha}(|\nabla\dot{u}|^2+|\dot{u}|^2)dxd\tau\Big)^\frac{1}{2}\\
&\qquad\times\Big(\int_0^t\Big(\intox|u|^6dx\Big)^\frac{1}{3}\Big(\intox(|\Psi|^3+|\nabla\Psi|^3)dx\Big)^\frac{2}{3}d\tau\Big)^\frac{1}{2}\\
&\le C\Big(\intoxst\tau^{2-\alpha}(|\nabla\dot{u}|^2+|\dot{u}|^2)dxd\tau\Big)^\frac{1}{2}\Big(\intoxst|\nabla u|^2dxd\tau\Big)^\frac{1}{2}.
\end{align*}
Hence we obtain from \eqref{eqn for dot u} that
\begin{align}\label{H2 bound step 1}
&\tau^{2-\alpha}\intox\rho|\dot{u}|^2(x,\tau)dx+\intoxst\tau^{2-\alpha}|\nabla\dot{u}|^2dxd\tau\notag\\
&\le C\intoxst\tau^{2-\alpha}|\nabla\dot{u}||\nabla u|^2dxd\tau+C\intoxst\tau^{1-\alpha}|\dot{u}|^2dxd\tau\\
&\qquad+C\Big(\intoxst\tau^{2-\alpha}(|\nabla\dot{u}|^2+|\dot{u}|^2)dxd\tau\Big)^\frac{1}{2}\Big(\intoxst|\nabla u|^2dxd\tau\Big)^\frac{1}{2}.\notag
\end{align}
In view of the bounds given by \eqref{L2 energy bound} and \eqref{H1 bound}, the terms $\dis\intoxst\tau^{1-\alpha}|\dot{u}|^2dxd\tau$ and $\dis\intoxst|\nabla u|^2dxd\tau$ can be bounded by $CC_0$, and by the Cauchy-Schwarz inequality, the term $\dis\intoxst\tau^{2-\alpha}|\nabla\dot{u}||\nabla u|^2dxd\tau$ is bounded by
\begin{align*}
\Big(\intoxst\tau^{2-\alpha}|\nabla\dot{u}|^2dxd\tau\Big)^\frac{1}{2}\Big(\intoxst\tau^{2-\alpha}|\nabla u|^4dxd\tau\Big)^\frac{1}{2}.
\end{align*}
It remains to consider the space-time integral of $\tau^{2-\alpha}|\nabla u|^4$. Using \eqref{bound on u in terms of F and omega},
\begin{align}\label{estimate on L4 of nabla u} 
\intoxst\tau^{2-\alpha}|\nabla u|^4dxd\tau\le C\intoxst\tau^{2-\alpha}(|F|^4+|\omega|^4+|\rho-\rho_s|^4)dxd\tau.
\end{align}
Since $t\in[0,1]$, with the help of \eqref{L2 energy bound}, we have
\begin{align*}
\intoxst\tau^{2-\alpha}|\rho-\rho_s|^4dxd\tau\le CC_0.
\end{align*}
For the integral of $F$ in \eqref{estimate on L4 of nabla u}, using the Sobolev inequality \eqref{GN1} and the estimate \eqref{bound on F and omega in terms of u} on $F$, for $\alpha\in(\frac{1}{2},1]$,
\begin{align*}
&\intoxst\tau^{2-\alpha}|F|^4dxd\tau\le C\int_0^t\tau^{2-\alpha}\Big(\intox|F|^2dx\Big)^\frac{1}{2}\Big(\intox|\nabla F|^2dx\Big)^\frac{1}{2}d\tau\\
&\le C\Big(\sup_{0\le\tau\le t}\tau^{1-\alpha}\intox|F|^2dx\Big)^\frac{1}{2}\Big(\sup_{0\le\tau\le t}\tau^{2-\alpha}\intox|\nabla F|^2dx\Big)^\frac{1}{2}\\
&\qquad\times\Big(\intoxst\tau^{1-\alpha}|\nabla F|^2dxd\tau\Big)\\
&\le CC_0^\frac{1}{2}\Big(\sup_{0\le\tau\le t}\tau^{2-\alpha}\intox|\dot{u}|^2dx+\intox|\nabla u|^2+\intox|\rho-\rho_s|^4dx\Big)^\frac{1}{2}\\
&\qquad\times(\Phi_2(t)+C_0)\\
&\le CC_0^\frac{3}{2}\Big(\sup_{0\le\tau\le t}\tau^{2-\alpha}\intox|\dot{u}|^2dx+C_0\Big)^\frac{1}{2},
\end{align*}
where we have used the bounds \eqref{pointwise bound on rho}, \eqref{L2 energy bound} and \eqref{H1 bound} and the fact that $\frac{1-\alpha}{2}+\frac{2-\alpha}{2}+1-\alpha\le 2-\alpha$ for $\alpha\in(\frac{1}{2},1]$. The integral of $\omega$ in \eqref{estimate on L4 of nabla u} can be treated in a similar way, and we obtain that
\begin{align*}
\intoxst\tau^{2-\alpha}|\nabla u|^4dxd\tau\le CC_0^\frac{3}{2}\Big(\sup_{0\le\tau\le t}\tau^{2-\alpha}\intox|\dot{u}|^2dx+C_0\Big)^\frac{1}{2}+CC_0.
\end{align*}
Therefore, we conclude from \eqref{H2 bound step 1} that
\begin{align*}
\sup_{0\le \tau\le t}\tau^{2-s}\intox|\dot{u}|^2dx+\int_0^t\intox \tau^{2-\alpha}|\nabla\dot{u}|^2dxd\tau\le CC_0^3+CC_0^2+CC_0,
\end{align*}
and the bound \eqref{H2 bound} follows.
\end{proof}

Combining the above results, we now have the following lemma which gives the bound for $\Phi(t)$ when $t\in[0,1]$.

\begin{lem}\label{a priori bounds with rho bounded}
Assume that the hypotheses and notations of Theorem~\ref{a priori bounds} are in force. Assume that $\rho$ satisfies \eqref{pointwise bound on rho} and $C_0\ll1$. Then for any $t\in[0,1]$,
\begin{align}\label{bound on Phi small time}
\Phi(t)&\le CC_0^\theta.
\end{align}
\end{lem}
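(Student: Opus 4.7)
The plan is to view Lemma~\ref{a priori bounds with rho bounded} as a simple assembly statement: it collects the three bounds already established for $\Phi_1(t)$, $\Phi_2(t)$, and $\Phi_3(t)$ on the small-time interval $[0,1]$ under the hypothesis that $\rho$ satisfies the pointwise bounds \eqref{pointwise bound on rho} and $C_0 \ll 1$. No new analysis is required; the work is bookkeeping about smallness exponents and the compatibility of the regularity parameter $\alpha$.

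First, I would invoke Lemma~\ref{L2 estimate lem}, whose hypothesis is exactly the pointwise bound \eqref{pointwise bound on rho}, to conclude $\Phi_1(t) \le CC_0$ for all $t \in [0,1]$ (and in fact for all $t$). Second, I would invoke Lemma~\ref{H1 estimates} with the parameter $\alpha \in (\tfrac{1}{2},1] \subset [0,1]$ to get
\begin{equation*}
\sup_{0\le \tau\le t}\tau^{1-\alpha}\intox|\nabla u|^2\,dx + \int_0^t\!\!\intox \tau^{1-\alpha}|\dot{u}|^2\,dxd\tau \le CC_0,
\end{equation*}
which is precisely $\Phi_2(t) \le CC_0$. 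Third, I would invoke Lemma~\ref{H2 estimates}, which requires $\alpha \in (\tfrac{1}{2},1]$ (exactly the restriction imposed in the statement of the present lemma), producing $\Phi_3(t) \le CC_0^{\theta_0}$ for some $\theta_0>0$.

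Adding the three estimates and choosing $\theta := \min\{1,\theta_0\}$, the assumption $C_0 \ll 1$ gives $C_0 \le C_0^\theta$, so
\begin{equation*}
\Phi(t) = \Phi_1(t) + \Phi_2(t) + \Phi_3(t) \le CC_0 + CC_0 + CC_0^{\theta_0} \le CC_0^\theta,
\end{equation*}
which is the claimed bound \eqref{bound on Phi small time}. The only subtlety — and it is a minor one — is ensuring that the smallness threshold on $C_0$ in each of the three cited lemmas is the same; since all three conditions are of the form $C_0 \le \varepsilon_i$ for some small $\varepsilon_i>0$, one simply takes the minimum. There is no genuine obstacle here: the hard analysis was done in Lemmas~\ref{L2 estimate lem}, \ref{H1 estimates}, and~\ref{H2 estimates}, and this lemma is their packaging into a single estimate that will feed into the closure of the pointwise density bound in Subsection~\ref{Pointwise bound on rho subsection}.
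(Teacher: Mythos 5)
Your proposal is correct and coincides with the paper's own (one-line) proof: Lemma~\ref{a priori bounds with rho bounded} is indeed just the assembly of \eqref{L2 energy bound}, \eqref{H1 bound} and \eqref{H2 bound} under the smallness assumption $C_0\ll1$, noting that $\sigma(\tau)=\tau$ on $[0,1]$ so the weighted bounds of Lemmas~\ref{H1 estimates} and~\ref{H2 estimates} are exactly $\Phi_2(t)$ and $\Phi_3(t)$. Your extra care with the choice of $\theta$ and the common smallness threshold is implicit in the paper but entirely consistent with it.
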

\begin{proof}
The bound \eqref{bound on Phi small time} follows immediately from \eqref{L2 energy bound}, \eqref{H1 bound} and \eqref{H2 bound} with the smallness assumption $C_0\ll1$.  
\end{proof}

We end this subsection by giving the following auxiliary $L^q$ estimates on the velocity $u$ when $q>6$. It will be used for obtaining pointwise bound on $\rho$ later.

\begin{lem}\label{Lq bound on u lem}
 Assume that the hypotheses and notations of Theorem~\ref{a priori bounds} are in force. Then for $q>6$ satisfying \eqref{condition on q and viscosity} and $t\in[0,1]$,
\begin{align}\label{Lq bound on u}
&\sup_{0\le \tau\le t}\int_{\R^3}|u(x,\tau)|^qdx+\int_0^t\int_{\R^3}|u|^{q-2}|\nabla u|^2dxd\tau+\int_0^t\int_{\R^3}|u|^qdxd\tau\\
&\qquad\le C\left[C_0+\int_{\R^3}|u_0|^qdx\right].\notag
\end{align}
\end{lem}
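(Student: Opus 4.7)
The plan is to multiply the momentum equation, rewritten using $\nabla P_s=\rho_s\nabla\Psi$ as
\begin{equation*}
\rho\dot{u}^j = -(P-P_s)_{x_j} + \mu\Delta u^j + \lambda(\divv u)_{x_j} + (\rho-\rho_s)\Psi_{x_j},
\end{equation*}
by $|u|^{q-2}u^j$ and integrate over $\R^3$. The mass equation from \eqref{NS}$_1$ converts the left-hand side into $\frac{1}{q}\frac{d}{d\tau}\int\rho|u|^q\,dx$, so everything reduces to controlling the four contributions on the right.

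The heart of the argument is the viscous term. Differentiating $\partial_k(|u|^{q-2}u^j)=|u|^{q-2}\partial_k u^j+(q-2)|u|^{q-3}u^j\partial_k|u|$ and integrating by parts yields
\begin{align*}
-\intox(\mu\Delta u^j+\lambda(\divv u)_{x_j})|u|^{q-2}u^j\,dx &= \mu\intox|u|^{q-2}|\nabla u|^2 + \mu(q-2)\intox|u|^{q-2}|\nabla|u||^2\\
&\quad + \lambda\intox|u|^{q-2}(\divv u)^2 + \lambda(q-2)\intox|u|^{q-3}(u\cdot\nabla|u|)\divv u.
\end{align*}
Using $|u\cdot\nabla|u||\le|u||\nabla|u||$ together with the pointwise inequality $|\nabla u|^2\ge|\nabla|u||^2$, and setting $A:=|u|^{(q-2)/2}|\nabla|u||$, $B:=|u|^{(q-2)/2}|\divv u|$, one bounds this expression below by $\mu\epsilon\int|u|^{q-2}|\nabla u|^2 + \int[\mu(q-1)(1-\epsilon)A^2+\lambda B^2-\lambda(q-2)AB]$ for any $\epsilon\in(0,1)$. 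The bracketed quadratic form in $(A,B)$ is nonnegative precisely when its discriminant satisfies $\lambda^2(q-2)^2 \le 4\mu\lambda(q-1)(1-\epsilon)$, which by \eqref{condition on q and viscosity} holds for $\epsilon$ sufficiently small. This produces the coercive lower bound $c\intox|u|^{q-2}|\nabla u|^2\,dx$ for some $c>0$.

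To handle the remaining terms, integration by parts gives the pressure contribution $\intox(P-P_s)\divv(|u|^{q-2}u)\,dx$, which by Hölder's inequality and $|\divv(|u|^{q-2}u)|\le(q-1)|u|^{q-2}|\nabla u|$ is bounded by $C\|P-P_s\|_{L^q}\cdot \||u|^{(q-2)/2}\nabla u\|_{L^2}\cdot\|u\|_{L^q}^{(q-2)/2}$, and is absorbed into the coercive term using Young's inequality, \eqref{pointwise bound on rho}, \eqref{L2 energy bound}, and interpolation of $\rho-\rho_s$ between $L^2$ and $L^\infty$. Similarly, the forcing term $\intox(\rho-\rho_s)\Psi_{x_j}|u|^{q-2}u^j\,dx$ is estimated by Hölder's inequality as $\|\rho-\rho_s\|_{L^q}\|\nabla\Psi\|_{L^\infty}\|u\|_{L^q}^{q-1}$, with $\nabla\Psi\in L^\infty$ by \eqref{condition on f} and Sobolev embedding, and the factor $\|u\|_{L^q}^{q-1}$ handled by Young's inequality to yield $\epsilon\|u\|_{L^q}^q+C_\epsilon$, where $\|u\|_{L^q}^q$ is comparable to $\intox\rho|u|^q\,dx$ by \eqref{pointwise bound on rho}.

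Combining everything yields a differential inequality of the schematic form $\frac{d}{d\tau}\intox\rho|u|^q\,dx + c\intox|u|^{q-2}|\nabla u|^2\,dx \le K\intox\rho|u|^q\,dx + K$ on $[0,1]$, and Gronwall's inequality delivers the $\sup$ bound in \eqref{Lq bound on u}. Since the interval length is at most $1$, integrating the sup bound in $\tau$ immediately gives the time-integrated $\int_0^t\intox|u|^q\,dxd\tau$ estimate, while the space-time bound on $|u|^{q-2}|\nabla u|^2$ comes directly from the coercive term. The main obstacle I anticipate is the algebraic bookkeeping in the viscous computation: one must correctly identify the quadratic form in $(A,B)$ and verify that \eqref{condition on q and viscosity} is exactly the sharp discriminant condition, leaving a surplus $\epsilon$ to retain the crucial $\int|u|^{q-2}|\nabla u|^2$ term on the left.
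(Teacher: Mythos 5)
Your proposal is correct and is essentially the argument the paper relies on: the paper does not reprove the lemma but cites \cite{suen16}, noting only that \eqref{condition on q and viscosity} is what controls $\int_0^t\intox|u|^{q-2}|\nabla u|^2\,dxd\tau$ — precisely the discriminant computation for the quadratic form in $(A,B)$ that you identify as the heart of the matter. The only point worth tightening in a write-up is that $\nabla\Psi\in L^\infty$ needs $\Psi\in H^3$ (as assumed in the steady-state proposition) rather than the $H^2$ of \eqref{condition on f}, and that the inhomogeneous Gronwall term should be closed with Young exponents $(\tfrac{q}{2},\tfrac{q}{q-2})$ so that it carries the full factor $C_0$ rather than a smaller power.
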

\begin{proof}
The proof can be found in \cite{suen16}. We point out that the condition \eqref{condition on q and viscosity} on $q$, $\mu$ and $\lambda$ is used for controlling the integral $\dis\int_0^t\int_{\R^3}|u|^{q-2}|\nabla u|^2dxd\tau$ in \eqref{Lq bound on u}.
\end{proof}

\begin{rem}
For the more general case when $t\in[0,T]$, one can obtain
\begin{align*}
&\sup_{0\le \tau\le T}\int_{\R^3}|u(x,\tau)|^qdx+\int_0^T\int_{\R^3}|u|^{q-2}|\nabla u|^2(x,\tau)dxd\tau+\int_0^T\int_{\R^3}|u|^qdxd\tau\\
&\qquad\le C(T)\left[C_0+\int_{\R^3}|u_0|^qdx\right],\notag
\end{align*}
where $C(T)>0$ further depends on $T$. Hence it gives $\dis\int_0^T\int_{\R^3}|u|^qdxd\tau<\infty$, which shows the bound \eqref{condition on weak sol5} for $u$ in $L^q$ when $q>6$.
\end{rem}

\subsection{Estimates on $u$ and $\rho$ for $t>1$}\label{large time estimates}

In this subsection, we obtain estimates on $u$ and $\rho$ when $t>1$. Specifically, we define\begin{align*}
\tilde\Phi_1(t)&=\sup_{1\le \tau\le t}(\|u(\cdot,\tau)\|_{L^2}^2+||(\rho-\rho_s)(\cdot,\tau)||_{L^2}^2) + \int_{0}^{t}\|\nabla u(\cdot,\tau)\|_{L^2}^2d\tau,\\
\tilde\Phi_2(t)&=\sup_{1\le \tau\le t}\|\nabla u(\cdot,\tau)\|^2_{L^2}+\int_1^t\|\dot{u}(\cdot,\tau)\|_{L^2}^2d\tau,\\
\tilde\Phi_3(t)&=\sup_{1\le \tau\le t}\|\dot{u}(\cdot,\tau)\|^2_{L^2}+\int_1^t\|\nabla\dot{u}(\cdot,\tau)\|^2_{L^2}d\tau,
\end{align*}
\begin{align}\label{def of Phi}
\tilde\Phi(t)=\tilde\Phi_1(t)+\tilde\Phi_2(t)+\tilde\Phi_3(t),
\end{align}
and we aim at bounding $\tilde\Phi(t)$ under the assumption that the pointwise bounds in \eqref{pointwise bound on rho} hold for the density $\rho$. As inspired by the work \cite{LM11}, we further introduce the following auxiliary functionals $\mathcal{H}(t)$ and $\mathcal{A}(t)$ given by
\begin{align*}
\mathcal{H}(t)&=\int_1^t\intox|\nabla u|^4dxd\tau,\\
\mathcal{A}(t)&=\int_1^t\intox|\rho-\rho_s|^\frac{10}{3}dxd\tau.
\end{align*}

To begin with, by the energy balance law given by Lemma~\ref{L2 estimate lem}, for all $t>1$, we readily have
\begin{align}\label{L2 estimate large time}
\tilde\Phi_1(t)\le CC_0.
\end{align}
In the next lemma, we will give the bounds on $\tilde\Phi_2$ and $\tilde\Phi_3$ in terms of $\mathcal{H}$ and $\mathcal{A}$.

\begin{lem}
Assume that the hypotheses and notations of Theorem~\ref{a priori bounds} are in force. Suppose that $\rho$ satisfies the pointwise bounds \eqref{pointwise bound on rho}. Then for any $t>1$,
\begin{align}
\tilde\Phi_2(t)&\le C\Big(C_0+\mathcal{A}(t)+C_0^\frac{1}{2}\mathcal{H}(t)^\frac{1}{2}\Big),\label{estimate on Phi2 large time}\\
\tilde\Phi_3(t)&\le C(C_0^\theta+\tilde\Phi_2(t)+\mathcal{H}(t)).\label{estimate on Phi3 large time}
\end{align}
\end{lem}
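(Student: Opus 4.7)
The plan is to adapt the short-time arguments of Lemmas~\ref{H1 estimates} and~\ref{H2 estimates} to the time interval $[1,t]$ by (i) dropping the weights $\sigma(\tau)^{1-\alpha}$, $\sigma(\tau)^{2-\alpha}$, which equal $1$ there, and (ii) using the initial values at $\tau=1$, which Lemma~\ref{a priori bounds with rho bounded} already controls by $\|\nabla u(\cdot,1)\|_{L^2}^2+\|\dot u(\cdot,1)\|_{L^2}^2\le CC_0^\theta$. The cubic- and quartic-in-$\nabla u$ integrals that were previously absorbed by the time weights will now be paid for by $\mathcal{H}(t)$ and $\mathcal{A}(t)$.

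For \eqref{estimate on Phi2 large time}, I would start from \eqref{another form of momentum eqn}, test against $\dot u$, and integrate over $\R^3\times[1,t]$. The viscous part yields $\|\nabla u(\cdot,\tau)\|_{L^2}^2\big|_1^t$ plus commutators of order $|\nabla u|^3$; the pressure term is handled by writing $\dot u=u_t+u\cdot\nabla u$, integrating by parts in $t$ and $x$, and using \eqref{NS}$_1$ to rewrite $\partial_t P$ in divergence form. This generates boundary contributions at $\tau=1$ (already $\le CC_0^\theta$) and at $\tau=t$, and space-time remainders of the form $\int_1^t\!\!\int|\rho-\rho_s||\nabla u|^2$ together with the cubic term $\int_1^t\!\!\int|\nabla u|^3$. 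The cubic term is handled by Cauchy-Schwarz as
\[
\int_1^t\!\!\int|\nabla u|^3\,dxd\tau\le\Bigl(\int_1^t\!\!\int|\nabla u|^2\Bigr)^{1/2}\mathcal{H}(t)^{1/2}\le CC_0^{1/2}\mathcal{H}(t)^{1/2},
\]
using the $L^2$ estimate \eqref{L2 estimate large time}. The density-weighted remainders are reduced by Hölder and interpolation with \eqref{bound on u in terms of F and omega} to an $\mathcal{A}(t)$ contribution plus terms that, after Young's inequality, are absorbed into the left side or the $C_0^{1/2}\mathcal{H}(t)^{1/2}$ piece. The force term $(\rho-\rho_s)\nabla\Psi\cdot\dot u$ is controlled directly by Cauchy-Schwarz using \eqref{bounds on rho s} and \eqref{L2 estimate large time}, contributing $CC_0$.

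For \eqref{estimate on Phi3 large time}, I would apply $D/Dt$ to \eqref{another form of momentum eqn} and test against $\dot u$, following the computation \eqref{eqn for dot u} but with the $\tau^{2-\alpha}$ factor removed. The viscous commutator produces the term $\int_1^t\!\!\int|\nabla\dot u||\nabla u|^2$, which by Cauchy-Schwarz satisfies
\[
\int_1^t\!\!\int|\nabla\dot u||\nabla u|^2\,dxd\tau\le\epsilon\int_1^t\|\nabla\dot u\|_{L^2}^2\,d\tau+C_\epsilon\,\mathcal{H}(t),
\]
with the first piece absorbed on the left of the $\tilde\Phi_3$ identity. The pressure and force contributions after the $D/Dt$ differentiation produce lower-order terms that, via \eqref{bound on F and omega in terms of u}--\eqref{bound on u in terms of F and omega} and Cauchy-Schwarz, are bounded by $C\tilde\Phi_2(t)+CC_0$. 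Adding the boundary term at $\tau=1$, already $\le CC_0^\theta$ by Lemma~\ref{a priori bounds with rho bounded}, yields \eqref{estimate on Phi3 large time}. The main obstacle is the bookkeeping for \eqref{estimate on Phi2 large time}: in the absence of time weights one must split the density-pressure remainders precisely so that the slowly-growing factor becomes $\mathcal{A}(t)$ and the remaining cubic-in-$\nabla u$ piece becomes $C_0^{1/2}\mathcal{H}(t)^{1/2}$ rather than something larger; once this splitting is set up, the $\tilde\Phi_3$ estimate follows by the same differentiate-and-test scheme as in Lemma~\ref{H2 estimates}.
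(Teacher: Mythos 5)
Your treatment of \eqref{estimate on Phi3 large time} matches the paper's: differentiate the momentum equation with $D/Dt$, test against $\dot u$ without time weights, absorb $\int_1^t\intox|\nabla\dot u||\nabla u|^2$ via Young into $\epsilon\tilde\Phi_3(t)+C_\epsilon\mathcal{H}(t)$, and pay the $\tau=1$ boundary term with $\Phi(1)\le CC_0^\theta$. That half is fine.

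For \eqref{estimate on Phi2 large time}, however, there is a genuine gap in your starting point and in the sentence ``the force term $(\rho-\rho_s)\nabla\Psi\cdot\dot u$ is controlled directly by Cauchy--Schwarz\dots contributing $CC_0$.'' Direct Cauchy--Schwarz gives
\begin{equation*}
\Big|\int_1^t\intox(\rho-\rho_s)\nabla\Psi\cdot\dot u\,dxd\tau\Big|
\le\Big(\int_1^t\intox|\rho-\rho_s|^2|\nabla\Psi|^2dxd\tau\Big)^{\frac12}\Big(\int_1^t\intox|\dot u|^2dxd\tau\Big)^{\frac12},
\end{equation*}
and the first factor is only $\le C\,(t-1)^{1/2}C_0^{1/2}$, since \eqref{L2 estimate large time} controls $\sup_\tau\|\rho-\rho_s\|_{L^2}$ but \emph{not} the time integral of $\|\rho-\rho_s\|_{L^2}^2$ on $[1,t]$; after Young this contributes $CtC_0$, which destroys the time-uniform bound. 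This linear-in-$(\rho-\rho_s)$ obstruction is exactly why the paper does not work from \eqref{another form of momentum eqn} in the large-time regime but instead from the rearranged form \eqref{another form of momentum eqn 2}: writing the pressure-plus-force contribution as $\rho_s\nabla(\rho_s^{-1}(P-P_s))$ plus a Taylor remainder, the terms \emph{linear} in $\rho-\rho_s$ cancel identically, and what survives is the explicitly \emph{quadratic} remainder $(\rho-\rho_s)^2\rho_s^{-1}\nabla\rho_s\int_0^1\int_0^1\xi P''(\cdot)\,d\xi d\eta$, whose pairing with $\dot u$ is bounded by $C\tilde\Phi_2(t)^{1/2}\mathcal{A}(t)^{1/2}$ (using boundedness of $\rho$ to pass from $(\rho-\rho_s)^4$ to $|\rho-\rho_s|^{10/3}$). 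That is the sole source of the $\mathcal{A}(t)$ term in \eqref{estimate on Phi2 large time}; your vaguer ``Hölder and interpolation with \eqref{bound on u in terms of F and omega}'' does not produce it and does not remove the offending linear term. (An alternative repair — integrating the force term by parts in time and using the mass equation to convert it into $\int_1^t\intox\rho u\otimes u:\nabla^2\Psi$-type terms bounded by $C\int_1^t\|\nabla u\|_{L^2}^2$ — would also work, but you would need to say so explicitly; as written the step fails.) The cubic term $\int_1^t\intox|\nabla u|^3\le CC_0^{1/2}\mathcal{H}(t)^{1/2}$ and the boundary terms at $\tau=1,t$ are handled exactly as in the paper.
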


\begin{proof}
First of all, following steps given in \cite[pp. 501]{LM11}, we can rewrite the momentum equation \eqref{NS}$_2$ as follows.
\begin{align}\label{another form of momentum eqn 2}
&\rho\dot{u}-\mu\Delta u-\lambda\nabla(\divv(u))+\rho_s\nabla(\rho_s^{-1}(P-P_s))\notag\\
&+(\rho-\rho_s)^2\rho_s^{-1}\nabla\rho_s\int_0^1\int_0^1\xi P''(\rho_s+\xi\eta(\rho-\rho_s)d\xi d\eta=0.
\end{align}
Multiply \eqref{another form of momentum eqn 2} by $\dot{u}$, integrate the resulting equation and follow the analysis given in \cite{hoff95} and \cite{LM11}, for $t>1$, we have
\begin{align}\label{H1 estimate large time step 1}
&\intox|\nabla u|^2(x,t)dx+\int_1^t\intox|\dot{u}|^2dxd\tau\notag\\
&\le C\int_1^t\intox|\nabla u|^3dxd\tau+\tilde\Phi_1(t)+\Phi_1(1)\\
&\qquad+\Big|\int_1^t\intox\dot{u}(\rho-\rho_s)^2\rho_s^{-1}\nabla\rho_s\int_0^1\int_0^1\xi P''(\rho_s+\xi\eta(\rho-\rho_s)d\xi d\eta dxd\tau\Big|\notag\\
&\qquad+\Big|\intox\divv(u)(P-P_s)dx\Big|^t_{1}\Big|+\Big|\intox u\cdot\nabla\rho_s\rho_s^{-1}(P-P_s)dx\Big|^t_{1}\Big|.\notag
\end{align}
The first term on the right side of \eqref{H1 estimate large time step 1} can be readily bounded by
\begin{align*}
\Big(\int_1^t\intox|\nabla u|^2dxd\tau\Big)^\frac{1}{2}\Big(\int_1^t\intox|\nabla u|^4dxd\tau\Big)\le \tilde\Phi_1(t)^\frac{1}{2}\mathcal{H}(t)^\frac{1}{2},
\end{align*}
and using the bounds \eqref{bound on Phi small time} and \eqref{L2 estimate large time}, the last two terms on the far right side of \eqref{H1 estimate large time step 1} are bounded by
\begin{align*}
\Big(\sup_{1\le t\le 1}(\|u(\cdot,t)\|_{L^2}+\|\nabla u(\cdot,t)\|_{L^2})\Big)\tilde\Phi_1(t)^\frac{1}{2}\le (\tilde\Phi_1(t)^\frac{1}{2}+\tilde\Phi_2(t)^\frac{1}{2})\tilde\Phi_1(t)^\frac{1}{2}.
\end{align*}
To estimate the fourth term on the right side of \eqref{H1 estimate large time step 1}, using the pointwise bound \eqref{pointwise bound on rho} on $\rho$, 
\begin{align*}
&\Big|\int_1^t\intox\dot{u}(\rho-\rho_s)^2\rho_s^{-1}\nabla\rho_s\int_0^1\int_0^1\xi P''(\rho_s+\xi\eta(\rho-\rho_s)d\xi d\eta dxd\tau\Big|\\
&\le C\Big(\int_1^t\intox|\dot{u}|^2dxd\tau\Big)^\frac{1}{2}\Big(\int_1^t\intox(\rho-\rho_s)^4dxd\tau\Big)^\frac{1}{2}\\
&\le C\tilde\Phi_2(t)^\frac{1}{2}\mathcal{A}(t)^\frac{1}{2}.
\end{align*}
Therefore, together with the bound \eqref{L2 estimate large time} on $\tilde\Phi_1(t)$ and the bound \eqref{L2 energy bound} on $\Phi_1(1)$, we conclude that \eqref{estimate on Phi2 large time} holds for $t>1$.

Next, to prove the bound \eqref{estimate on Phi3 large time}, we perform the similar analysis given in the proof of Lemma~\ref{H2 estimates} and obtain, for $t>1$, 
\begin{align*}
&\intox|\dot{u}|^2(\cdot,t)dx+\int_1^t|\nabla\dot{u}|^2(\cdot,\tau)dxd\tau\\
&\le C\int_1^t\intox|\dot{u}|^2dxd\tau+C\int_1^t\intox|\nabla\dot{u}||\nabla u|^2dxd\tau\\
&\qquad+C\Big(\int_1^t\intox(|\nabla\dot{u}|^2+|\dot{u}|^2)dxd\tau\Big)^\frac{1}{2}\Big(\int_1^t\intox|\nabla u|^2dxd\tau\Big)^\frac{1}{2}+\Phi(1).
\end{align*}
By the Cauchy-Schwarz inequality,
\begin{align*}
\int_1^t\intox|\nabla\dot{u}||\nabla u|^2dxd\tau&\le\Big(\int_1^t\intox|\nabla\dot{u}|^2dxd\tau\Big)^\frac{1}{2}\Big(\int_1^t\intox|\nabla u|^4dxd\tau\Big)^\frac{1}{2}\\
&\le \tilde\Phi_3(t)^\frac{1}{2}\mathcal{H}(t)^\frac{1}{2},
\end{align*}
hence the bound \eqref{estimate on Phi3 large time} follows with the help of the bound \eqref{L2 estimate large time} on $\tilde\Phi_1(t)$ and the bound \eqref{bound on Phi small time} on $\Phi(1)$.
\end{proof}

We close the estimates on $\tilde\Phi_2$ and $\tilde\Phi_3$ by bounding $\mathcal{A}(t)$ and $\mathcal{H}(t)$ in terms of $\tilde\Phi$, which will be illustrated in the following lemma:

\begin{lem}
Assume that the hypotheses and notations of Theorem~\ref{a priori bounds} are in force. Suppose that $\rho$ satisfies the pointwise bounds \eqref{pointwise bound on rho}. Then for any $t>1$,
\begin{align}
\mathcal{H}(t)&\le C(\tilde\Phi(t)^2+\mathcal{A}(t)^2+\mathcal{A}(t)+C_0^2),\label{estimate on H large time}\\
\mathcal{A}(t)&\le C\tilde\Phi(t)^\frac{2}{3}(\tilde\Phi(t)+\mathcal{A}(t)+C_0)+CC_0^\frac{5}{3}+CC_0.\label{estimate on A large time}
\end{align}
\end{lem}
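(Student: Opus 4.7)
The plan is to prove both estimates using the representation \eqref{bound on u in terms of F and omega} of $\nabla u$ in terms of $(F,\omega,\rho-\rho_s)$, the Poisson estimate \eqref{bound on F and omega in terms of u}, the Gagliardo--Nirenberg inequality \eqref{GN1}, the pointwise bound \eqref{pointwise bound on rho}, and, for $\mathcal{A}(t)$, a weighted energy identity obtained by rewriting the continuity equation via the definition of $F$.

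\emph{Bound on $\mathcal{H}(t)$.} Applying \eqref{bound on u in terms of F and omega} with $r_2=4$ reduces $\mathcal H(t)$ to the sum of $\int_1^t\|F\|_{L^4}^4\,d\tau$, the analogous integral for $\omega$, and $\int_1^t\|\rho-\rho_s\|_{L^4}^4\,d\tau$. The pressure contribution is absorbed into $\mathcal{A}(t)$ via the interpolation $\|\rho-\rho_s\|_{L^4}^4\le\|\rho-\rho_s\|_{L^\infty}^{2/3}\|\rho-\rho_s\|_{L^{10/3}}^{10/3}\le C\|\rho-\rho_s\|_{L^{10/3}}^{10/3}$, using \eqref{pointwise bound on rho}. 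For the $F$ contribution, the Gagliardo--Nirenberg inequality \eqref{GN1} with $p=4$ yields $\|F\|_{L^4}^4\le C\|F\|_{L^2}\|\nabla F\|_{L^2}^3$; pulling out $\sup_\tau\|F\|_{L^2}$ and $\sup_\tau\|\nabla F\|_{L^2}$ (each bounded by $C(\tilde\Phi+C_0)^{1/2}$ using the definition of $F$ together with \eqref{bound on F and omega in terms of u} at $r_1=2$ and \eqref{pointwise bound on rho}), and using \eqref{bound on F and omega in terms of u} again to obtain $\int_1^t\|\nabla F\|_{L^2}^2\,d\tau\le C(\tilde\Phi+C_0+\mathcal A)$ (the pressure piece being controlled by the same interpolation as above), one arrives at $\int_1^t\|F\|_{L^4}^4\,d\tau\le C(\tilde\Phi+C_0)(\tilde\Phi+C_0+\mathcal A)$; an identical argument handles $\omega$. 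Young's inequality on this product delivers \eqref{estimate on H large time}.

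\emph{Bound on $\mathcal{A}(t)$.} Since $\divv u=(\mu+\lambda)^{-1}(\rho_sF+(P-P_s))$, the continuity equation becomes
\begin{align*}
(\rho-\rho_s)_t+\divv((\rho-\rho_s)u)+(\mu+\lambda)^{-1}\rho_s(P-P_s)=-(\mu+\lambda)^{-1}\rho_s^2 F-u\cdot\nabla\rho_s.
\end{align*}
Multiplying by $|\rho-\rho_s|^{4/3}(\rho-\rho_s)$ and integrating in $x$, the time derivative yields $\tfrac{3}{10}\tfrac{d}{dt}\|\rho-\rho_s\|_{L^{10/3}}^{10/3}$; the transport term, after integration by parts, becomes $\tfrac{7}{10}\int|\rho-\rho_s|^{10/3}\divv u\,dx$; and the pressure term is nonnegative with lower bound $c\|\rho-\rho_s\|_{L^{10/3}}^{10/3}$ for some $c>0$, thanks to the monotonicity of $P$ together with \eqref{pointwise bound on rho}. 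Integrating over $[1,t]$ (the value at $\tau=1$ is bounded by $C\|\rho-\rho_s\|_{L^2}^2\|\rho-\rho_s\|_{L^\infty}^{4/3}\le CC_0$ through Lemma~\ref{L2 estimate lem}), the three remaining right-hand terms are estimated by Hölder and interpolation: the $\divv u$ term, via $\|\rho-\rho_s\|_{L^{20/3}}^{10/3}\le C\|\rho-\rho_s\|_{L^{10/3}}^{5/3}$ and Cauchy--Schwarz in time, contributes $C\mathcal A^{1/2}C_0^{1/2}$; the $u\cdot\nabla\rho_s$ term, via $\|u\|_{L^6}\le C\|\nabla u\|_{L^2}$, $\|\nabla\rho_s\|_{L^\infty}<\infty$, and $\|\rho-\rho_s\|_{L^{14/5}}^{7/3}\le CC_0^{1/3}\|\rho-\rho_s\|_{L^{10/3}}^{5/3}$, contributes $CC_0^{5/6}\mathcal A^{1/2}$; and the $F$ term, via Hölder in time combined with the GN inequality $\|F\|_{L^{10/3}}^{10/3}\le C\|F\|_{L^2}^{4/3}\|\nabla F\|_{L^2}^2$ and the bound on $\int_1^t\|\nabla F\|_{L^2}^2\,d\tau$ already derived, produces the leading piece $C\tilde\Phi^{2/3}(\tilde\Phi+\mathcal A+C_0)$. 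Young's inequality converts each square-root product into an $\epsilon\mathcal A+CC_0$ or $\epsilon\mathcal A+CC_0^{5/3}$ form; absorbing the $\epsilon\mathcal A$ terms into the left side gives \eqref{estimate on A large time}.

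\emph{Main obstacle.} The principal subtlety is the self-referential nature of the pair of bounds: $\mathcal A$ appears on both sides of \eqref{estimate on A large time}, and $\mathcal H$ feeds back into $\tilde\Phi$ through \eqref{estimate on Phi2 large time}--\eqref{estimate on Phi3 large time}. Closing the bootstrap therefore requires choosing the Young/Hölder exponents so that every occurrence of $\mathcal A$ on the right either enters as $\mathcal A^2$ (as in \eqref{estimate on H large time}) or is multiplied by a coefficient of the form $\tilde\Phi^{2/3}$, $C_0^\theta$, or an adjustable small $\epsilon$; the ultimate absorption then relies on the smallness assumption $C_0\ll 1$ that will be invoked in the closing pointwise-in-$\rho$ argument of Subsection~\ref{Pointwise bound on rho subsection}.
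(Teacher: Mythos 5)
Your proposal is correct and follows essentially the same route as the paper: \eqref{bound on u in terms of F and omega} plus Gagliardo--Nirenberg interpolation of $\|F\|_{L^4}^4$ and $\|\omega\|_{L^4}^4$ through $\|\cdot\|_{L^2}$ and $\|\nabla\cdot\|_{L^2}$ for $\mathcal H(t)$, and the continuity equation rewritten via the definition of $F$, tested against $\sgn(\rho-\rho_s)|\rho-\rho_s|^{7/3}$, with the monotonicity of $P$ supplying the coercive $\|\rho-\rho_s\|_{L^{10/3}}^{10/3}$ term, for $\mathcal A(t)$. The only (immaterial) difference is that you use the conservative form of the mass equation where the paper uses the material-derivative form \eqref{mass eqn rewritten}, and you are in fact more explicit than the paper in tracking the $\divv u$ and $u\cdot\nabla\rho_s$ contributions.
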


\begin{proof}
To prove the bound \eqref{estimate on H large time} on $\mathcal{H}$, using \eqref{bound on u in terms of F and omega},
\begin{align}\label{estimate on L4 of nabla u large time} 
\int_1^t\intox|\nabla u|^4dxd\tau\le C\int_1^t\intox(|F|^4+|\omega|^4+|\rho-\rho_s|^4)dxd\tau.
\end{align}
The integral of $|\rho-\rho_s|^4$ can be bounded by $C\mathcal{A}(t)$, and by the Sobolev inequality \eqref{GN1}, the estimate \eqref{bound on F and omega in terms of u} on $F$ and the bound \eqref{L2 estimate large time} on $\tilde\Phi_1(t)$, we also have
\begin{align*}
&\int_1^t\intox|F|^4dxd\tau\\
&\le\Big(\sup_{1\le\tau\le t}\intox|F|^2dx\Big)^\frac{1}{2}\Big(\sup_{1\le\tau\le t}\intox|\nabla F|^2dx\Big)^\frac{1}{2}\Big(\int_1^t\intox|\nabla F|^2dxd\tau\Big)\\
&\le C\Big(\tilde\Phi_2(t)+C_0\Big)^\frac{1}{2}\Big(\tilde\Phi_3(t)+C_0\Big)^\frac{1}{2}(\tilde\Phi_2(t)+\mathcal{A}(t)+C_0).
\end{align*}
We estimate the integral of $|\omega|^4$ in a similar way and we conclude from \eqref{estimate on L4 of nabla u large time} that
\begin{align*}
\int_1^t\intox|\nabla u|^4dxd\tau\le C(\tilde\Phi(t)+C_0)(\tilde\Phi(t)+\mathcal{A}(t)+C_0)+C\mathcal{A}(t),
\end{align*}
which gives \eqref{estimate on H large time}. To prove \eqref{estimate on A large time}, we make use of the definition of $F$ and rewrite the mass equation \eqref{NS}$_1$ as follows:
\begin{align}\label{mass eqn rewritten}
(\mu+\lambda)\frac{D}{Dt}(\rho-\rho_s)+\rho(P-P_s)=-\rho\rho_s F-(\mu+\lambda)u\cdot\nabla\rho_s.
\end{align}
We multiply \eqref{mass eqn rewritten} by $\sgn(\rho-\rho_s)|\rho-\rho_s|^\frac{7}{3}$, integrate and use the pointwise bound \eqref{pointwise bound on rho} on $\rho$ to obtain
\begin{align}\label{mass eqn rewritten integral ineq}
&\int_1^t\intox|\rho-\rho_s|^\frac{10}{3}dxd\tau\notag\\
&\le C\Big|\intox|\rho-\rho_s|^\frac{10}{3}dx\Big|_{1}^t\Big|+C\int_1^t\intox(|F|^\frac{10}{3}+|u|^\frac{10}{3})dxd\tau.
\end{align}
Using the bounds \eqref{bound on Phi small time} and \eqref{L2 estimate large time}, the terms $\dis \Big|\intox|\rho-\rho_s|^\frac{10}{3}dx\Big|_{1}^t\Big|$ are bounded by $CC_0$. To bound the integrals of $|F|^\frac{10}{3}$ and $|u|^\frac{10}{3}$, we use the Sobolev inequality \eqref{GN1}, the estimate \eqref{bound on F and omega in terms of u} and the bound \eqref{L2 estimate large time} to get
\begin{align*}
&\int_1^t\intox(|F|^\frac{10}{3}+|u|^\frac{10}{3})dxd\tau\\
&\le C\int_1^t\Big(\intox|F|^2dx\Big)^\frac{2}{3}\Big(\intox|\nabla F|^2dx\Big)d\tau\\
&\qquad+C\int_1^t\Big(\intox|u|^2dx\Big)^\frac{2}{3}\Big(\intox|\nabla u|^2dx\Big)d\tau\\
&\le C\tilde\Phi(t)^\frac{2}{3}(\tilde\Phi(t)+\mathcal{A}(t)+C_0)+CC_0^\frac{5}{3}.
\end{align*}
Therefore, we conclude that, for $t>1$, 
\begin{align*}
\int_1^t\intox|\rho-\rho_s|^\frac{10}{3}dxd\tau\le C\tilde\Phi(t)^\frac{2}{3}(\tilde\Phi(t)+\mathcal{A}(t)+C_0)+CC_0^\frac{5}{3}+CC_0,
\end{align*}
which implies \eqref{estimate on A large time}.
\end{proof}

We summarise the above results and give the bound for $\tilde\Phi(t)$ when $t>1$.

\begin{lem}\label{a priori bounds with rho bounded large time}
Assume that the hypotheses and notations of Theorem~\ref{a priori bounds} are in force. Assume that $\rho$ satisfies \eqref{pointwise bound on rho} and $C_0\ll1$. Then for any $t>1$,
\begin{align}\label{bound on Phi large time}
\mathcal{H}(t)+\mathcal{A}(t)+\tilde\Phi(t)&\le CC_0^\theta.
\end{align}
\end{lem}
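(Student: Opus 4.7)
The plan is to run a continuity/bootstrap argument that closes the four nonlinear inequalities established in the preceding three lemmas of Subsection~\ref{large time estimates}. Set, for brevity, $Y(t):=\tilde\Phi(t)+\mathcal{A}(t)+\mathcal{H}(t)$. First I would check that $Y(1)$ is already of size $CC_0^\theta$: indeed $\mathcal{A}(1)=\mathcal{H}(1)=0$ by definition, and $\tilde\Phi(1)\le \Phi(1)\le CC_0^\theta$ by Lemma~\ref{a priori bounds with rho bounded} together with the fact that at $t=1$ the suprema and time integrals in the definition of $\tilde\Phi$ reduce to those of $\Phi$.

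Next I would fix a small threshold $\delta\in(0,1)$ (to be chosen) and define $T^\ast=\sup\{t\ge 1:Y(t)\le\delta\}$. By continuity and the initial estimate, $T^\ast>1$ once $C_0$ is sufficiently small. On the interval $[1,T^\ast]$ I would chain the four inequalities in the following order. From \eqref{estimate on A large time}, using $\tilde\Phi(t)\le\delta$ and $\mathcal{A}(t)\le\delta$, one gets $\mathcal{A}(t)\le C\delta^{5/3}+C\delta^{2/3}C_0+CC_0\le C(\delta^{5/3}+C_0)$. Plugging this into \eqref{estimate on H large time} yields $\mathcal{H}(t)\le C(\delta^{5/3}+C_0)$, since the dominant terms there are $\mathcal{A}(t)$ and $\tilde\Phi(t)^2$. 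Feeding these back into \eqref{estimate on Phi2 large time} and \eqref{estimate on Phi3 large time}, and adding the energy bound \eqref{L2 estimate large time} for $\tilde\Phi_1$, I obtain
\[
\tilde\Phi(t)\le C\bigl(C_0+\mathcal{A}(t)+\mathcal{H}(t)+C_0^{1/2}\mathcal{H}(t)^{1/2}+C_0^\theta\bigr)\le C_1\bigl(C_0^\theta+\delta^{5/3}\bigr)
\]
for some absolute constant $C_1$. Summing up the three contributions gives $Y(t)\le C_2(C_0^\theta+\delta^{5/3})$.

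To close the bootstrap, I would choose $\delta$ so small that $C_2\delta^{5/3}<\delta/4$ (which is possible since $5/3>1$), and then demand $C_0$ small enough that $C_2C_0^\theta<\delta/4$. Then $Y(t)\le\delta/2$ on $[1,T^\ast]$, which by continuity forces $T^\ast$ to extend to the full time interval under consideration; the same chain of inequalities then yields $Y(t)\le CC_0^\theta$ (after a harmless reduction of $\theta$), which is \eqref{bound on Phi large time}.

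The main obstacle I anticipate is making sure the superlinear coupling terms---specifically the $\tilde\Phi^{2/3}(\tilde\Phi+\mathcal{A}+C_0)$ piece in \eqref{estimate on A large time} and the $\mathcal{A}^2$ piece in \eqref{estimate on H large time}---are truly absorbed by the smallness of $\delta$ rather than re-contaminating the estimate of $\tilde\Phi$ via \eqref{estimate on Phi2 large time}--\eqref{estimate on Phi3 large time}. The key point is that the exponent $5/3$ exceeds $1$, so the bootstrap parameter $\delta$ can be chosen independently of $C_0$; this decouples the two smallness thresholds and lets the $CC_0^\theta$ contribution from the initial layer drive the final bound. No other step is harder than tracking the bookkeeping of powers of $\delta$ and $C_0$ already carried out for $\Phi(t)$ on $[0,1]$.
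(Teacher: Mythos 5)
Your proposal is correct and is essentially the argument the paper intends: the paper's one-line proof ("follows immediately from \eqref{L2 estimate large time}, \eqref{estimate on Phi3 large time}, \eqref{estimate on H large time} and \eqref{estimate on A large time} with $C_0\ll1$") is precisely the continuity/absorption closure of the coupled superlinear inequalities that you spell out, seeded by $\tilde\Phi(1)\le\Phi(1)\le CC_0^\theta$ and $\mathcal{A}(1)=\mathcal{H}(1)=0$. The only point worth making explicit is that the final bound $Y(t)\le CC_0^\theta$ does not follow verbatim from your first pass (which leaves a residual $C_2\delta^{5/3}$ with $\delta$ fixed independently of $C_0$); one must rerun the chain once more using $Y\le\delta$ to absorb the terms $C\delta^{2/3}\tilde\Phi$ into the left-hand side, which is exactly the absorption you allude to in your closing paragraph.
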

\begin{proof}
The bound \eqref{bound on Phi large time} follows immediately from \eqref{L2 estimate large time}, \eqref{estimate on Phi3 large time}, \eqref{estimate on H large time} and \eqref{estimate on A large time} with the smallness assumption $C_0\ll1$.  
\end{proof}

\subsection{Pointwise bound on $\rho$ and proof of Theorem~\ref{a priori bounds}}\label{Pointwise bound on rho subsection}

We now close the estimates on $(\rho,u)$ by proving the pointwise bounds \eqref{pointwise bound on rho} on $\rho$. First of all, by the bounds \eqref{bound on Phi small time} and \eqref{bound on Phi large time}, we have
\begin{align}\label{bound on Phi all time}
\Phi(t)&\le CC_0^\theta,\qquad t\in[0,T].
\end{align} 

Next, we obtain some higher order estimates on $\|u\|_{L^r}$, $\|u\|_{H^{\alpha}}$, $\langle u\rangle^{\frac{1}{2},\frac{1}{4}}_{\R^3 \times [\tau,\infty)}$ for $\alpha\in[0,1]$, $r\in(3,\frac{3}{1-\alpha})$ and $\tau>0$.

\begin{lem}\label{holder estimates on u lem}
Assume that the hypotheses and notations of Theorem~\ref{a priori bounds} are in force. Suppose that $\rho$ satisfies the pointwise bounds \eqref{pointwise bound on rho}. Then for any $t\in(0,T]$ and $\tau>0$,
\begin{align}\label{bound on u weak sol lem}
\sup_{0\le \tau<t}\|u(\cdot,\tau)\|_{H^{\alpha}}\le CC_0^\theta,\qquad \sup_{0\le \tau<t}\|u(\cdot,\tau)\|_{L^{r}}\le C(r)C_0^\theta,
\end{align}
and
\begin{align}\label{Holder estimates in space time lem}
\langle u\rangle^{\frac{1}{2},\frac{1}{4}}_{\R^3 \times [t,\infty)},\langle F\rangle^{\frac{1}{2},\frac{1}{4}}_{\R^3 \times [t,\infty)},\langle \omega\rangle^{\frac{1}{2},\frac{1}{4}}_{\R^3 \times [t,\infty)}\leq C(\tau)C_{0}^{\theta},\;\;\;t\ge\tau>0,
\end{align}
where $\alpha\in[0,1]$ and $r\in(3,\frac{3}{1-\alpha})$. Here $C(r)$ is a positive constant which depends only on $r$ and $C(\tau)>0$ may depend additionally on a positive lower bound for $\tau$.
\end{lem}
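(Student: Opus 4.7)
The plan is to leverage the uniform bound $\Phi(t) \le CC_0^\theta$ from Theorem~\ref{a priori bounds} (equivalently \eqref{bound on Phi all time}), combined with the linear decomposition of $u$ introduced in Lemma~\ref{H1 estimates} and elliptic regularity for $F$ and $\omega$. For the $H^\alpha$ bound in \eqref{bound on u weak sol lem}, I first split $u = w_1 + w_2$ with $\mathcal{L}w_1 = 0$, $w_1(x,0) = u_0$, and $w_2(x,0) = 0$. The estimates \eqref{L2 estimate on w1} and \eqref{H1 bound on w1 smooth} show that the linear solution map $u_0 \mapsto w_1(\cdot,\tau)$ is bounded both from $L^2$ into $L^\infty((0,t);L^2)$ and from $H^1$ into $L^\infty((0,t);H^1)$, so Riesz--Thorin interpolation upgrades this to a uniform bound $H^\alpha \to L^\infty((0,t);H^\alpha)$. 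The piece $w_2$ satisfies the same $H^\alpha$ bound by interpolating \eqref{L2 estimate on w2} and \eqref{H1 bound on w2}, while the large-time regime $\tau \ge 1$ is handled directly by \eqref{bound on Phi large time}. Patching the small- and large-time estimates then delivers $\sup_{0\le\tau<t}\|u(\cdot,\tau)\|_{H^\alpha} \le CC_0^\theta$. The companion $L^r$ bound follows from the Sobolev embedding $H^\alpha(\R^3) \hookrightarrow L^{6/(3-2\alpha)}(\R^3)$ for $r \in (3,\,6/(3-2\alpha)]$, and interpolating this with the $L^q$ estimate from Lemma~\ref{Lq bound on u lem} (for $q > 6$ as permitted by \eqref{smallness assumption}) extends the range to $r < 3/(1-\alpha)$.

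For the space-time H\"older estimate \eqref{Holder estimates in space time lem}, I fix $\tau_0 > 0$ and work on $[\tau_0,\infty)$. From $\Phi_3(t) \le CC_0^\theta$ one has $\sup_{\tau \ge \tau_0}\|\dot u(\cdot,\tau)\|_{L^2} \le C(\tau_0)C_0^\theta$, together with $\int_{\tau_0}^\infty \|\nabla \dot u(\cdot,\tau)\|_{L^2}^2\,d\tau \le C(\tau_0)C_0^\theta$. Applying \eqref{bound on F and omega in terms of u} with $r_1 = 6$, using Sobolev to control $\|\dot u\|_{L^6}$ by $\|\nabla \dot u\|_{L^2}$, and invoking \eqref{pointwise bound on rho}, I deduce $\|\nabla F(\cdot,\tau)\|_{L^6} + \|\nabla \omega(\cdot,\tau)\|_{L^6} \le C(\tau_0) C_0^\theta$ for a.e. $\tau \ge \tau_0$. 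Morrey's embedding $W^{1,6}(\R^3) \hookrightarrow C^{1/2}(\R^3)$ then yields the desired spatial $C^{1/2}$ control of $F$ and $\omega$, and the corresponding bound for $u$ follows from the identity $(\mu+\lambda)\Delta u^j = (\rho_s F)_{x_j} + (\mu+\lambda)\omega^{j,k}_{x_k} + (P-P_s)_{x_j}$ together with elliptic regularity and the $L^\infty$ control of $\rho - \rho_s$. The $1/4$ H\"older continuity in time is extracted by the classical argument of Hoff \cite{hoff02, hoff06}: one writes $F(x,t_2) - F(x,t_1) = \int_{t_1}^{t_2} F_t\,ds$, expresses $F_t$ via the time-differentiated elliptic equation in terms of $\dot u$, and uses the $L^2_t$-integrability of $\dot u$ to gain a factor $|t_2 - t_1|^{1/2}$ in $L^\infty_x$; combining this with the spatial $C^{1/2}$ estimate via the standard interpolation inequality produces the joint $(1/2,1/4)$ H\"older bound, and the analogous scheme applies to $\omega$ and $u$.

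The main technical obstacle lies in the temporal H\"older step: the balancing of the $L^\infty_t$ spatial regularity against the $L^2_t$-integrability of $\dot u$ must be performed sharply in order to recover precisely the exponent $1/4$, and one has to track carefully how the constants $C(\tau_0)$ behave as $\tau_0 \downarrow 0$ due to the time weights $\sigma(\tau)^{2-\alpha}$ appearing in $\Phi_3$. All other steps, namely the $H^\alpha$ and $L^r$ bounds and the spatial $C^{1/2}$ regularity of $F$ and $\omega$, are more routine consequences of interpolation and elliptic regularity.
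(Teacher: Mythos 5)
Your proposal follows essentially the same route as the paper: the $w_1+w_2$ decomposition with Riesz--Thorin interpolation for the small-time $H^\alpha$ bound, the direct use of \eqref{bound on Phi large time} for $t>1$, the elliptic estimate \eqref{bound on F and omega in terms of u} with $r_1=6$ plus the Sobolev/Morrey embedding for the spatial $C^{1/2}$ control of $F$ and $\omega$, and Hoff's ball-averaging interpolation between $\sup_t\|F_t\|_{L^2}$ and the spatial seminorm to extract the temporal exponent $1/4$. The one place you genuinely diverge is the $L^r$ step: the paper simply invokes an embedding $H^\alpha\hookrightarrow L^r$ for all $r\in(3,\tfrac{3}{1-\alpha})$, whereas you correctly observe that Sobolev only reaches $r\le \tfrac{6}{3-2\alpha}$ and propose interpolating with the $L^q$ bound of Lemma~\ref{Lq bound on u lem}; this is a sensible repair of a weak point in the paper, though note that the interpolated constant then carries a factor of $\|u_0\|_{L^q}$ (which \eqref{smallness assumption} assumes finite but not small) rather than a pure power of $C_0$, and only covers $r\le q$.
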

\begin{proof}
To show the estimate for $\|u(\cdot,\tau)\|_{H^{\alpha}}$, we follow the proof of Lemma~\ref{H1 estimates} to obtain, for $k=0,1$ and $t\le1$,
\begin{align*}
\sup_{0\le \tau<t}\|w_1(\cdot,\tau)\|_{H^k}\le C\|w_{10}\|_{H^k}.
\end{align*}
Hence by interpolation, we have
\begin{align*}
\sup_{0\le \tau<t}\|w_1(\cdot,\tau)\|_{H^\alpha}\le C\|w_{10}\|_{H^\alpha}
\end{align*}
for $\alpha\in[0,1]$. Choosing $w_{10}=u_0$, then we have $u=w_1+w_2$ and together with the estimates on $w_2$, we conclude that
\begin{align}\label{holder estimate small time}
\sup_{0\le \tau<t\le1}\|u(\cdot,\tau)\|_{H^\alpha}\le C\|u_{0}\|_{H^\alpha}.
\end{align}
For the case when $t\in(1,T]$, by the bound \eqref{bound on Phi large time}, we readily have
\begin{align*}
\sup_{1<\tau\le t}\intox(|u|^2+|\nabla u|^2)dxd\tau\le CC_0^\theta,
\end{align*}
hence it implies
\begin{align}\label{holder estimate large time}
\sup_{1<\tau\le t}\|u(\cdot,\tau)\|_{H^\alpha}\le CC_0^\theta,
\end{align}
and we have the estimate on $\dis\sup_{0\le \tau<t}\|u(\cdot,\tau)\|_{H^\alpha}$ for all $t\in[0,T]$. The estimate on $\dis\sup_{0\le \tau<t}\|u(\cdot,\tau)\|_{L^r}$ then follows from \eqref{holder estimate small time}-\eqref{holder estimate large time} and the imbedding $H^\alpha\hookrightarrow L^r$ for $r\in(3,\frac{3}{1-\alpha})$. Finally, to show \eqref{Holder estimates in space time lem}, we only consider the case for $F$ since the cases for $u$ and $\omega$ are somewhat simpler. Notice that using \eqref{bound on F and omega in terms of u} for $r_1=6$ and the Sobolev imbedding \eqref{GN1},
\begin{align}\label{L6 bound on nabla F}
\|\nabla F\|_{L^{6}}&\le C(\|\dot{u}\|_{L^{6}}+\|(\rho-\trho)\|_{L^{6}})\\
&\le C(\|\nabla\dot{u}\|_{L^{2}}+\|(\rho-\trho)\|_{L^{2}}).\notag
\end{align}
Fix $\tau>0$, by the bound \eqref{bound on Phi all time}, the right side of \eqref{L6 bound on nabla F} can be bounded in terms of $C_0$ and $\tau$ whenever $t\ge\tau$. Therefore, we have
\begin{align*}
\|F(\cdot,t)\|_{L^\infty}+\langle F(\cdot,t)\rangle^\frac{1}{2}\le CC_0^\theta,\qquad t\ge\tau.
\end{align*}
For the H\"older continuity in time, we fix $x$ and $t_2\ge t_1\ge\tau$ and compute
\begin{align*}
|F (x,t_2)-F (x,t_1)|&\le\frac{1}{|B_{R}(x)|}\int_{B_R(x)}\left |F(z,t_2)-F(z,t_1)\right |dz + C(\tau)C_0^{\theta}R^{\frac{1}{2}}\notag\\
&\le R^{-\frac{3}{2}}|t_2 - t_1|\sup_{t\ge\tau}\left( \int |F_t|^2dx\right)^{\frac{1}{2}} + C(\tau)C_0^{\theta}R^{\frac{1}{2}}\notag\\
&\le C(\tau)C_0^\theta\left[R^{-\frac{3}{2}}|t_2 - t_1|+R^{\frac{1}{2}}\right]
\end{align*}
by the bound \eqref{bound on Phi all time}. Taking $R=|t_2 - t_1|^\frac{1}{2}$ we then obtain the estimate in \eqref{Holder estimates in space time lem} for $F$.
\end{proof}

Now we are ready to establish the pointwise bound \eqref{pointwise bound on rho} on $\rho$ and complete the proof of Theorem~\ref{a priori bounds}. The proof of \eqref{pointwise bound on rho} consists of a maximum-principle argument applied along particle trajectories of $u$, we only sketch it here and details can be found in \cite{suen13b, suen14, suen16}. Fix $y\in\R^3$ and define the corresponding particle trajectory $x(t)$ by
\begin{align*}
\left\{ \begin{array}
{lr} \dot{x}(t)
=u(x(t),t)\\ x(0)=y.
\end{array} \right.
\end{align*}
We have from the definition \eqref{def of F and omega} of $F$ and the mass equation \eqref{NS}$_1$ that
\begin{align*}
(\mu+\lambda)\frac{d}{dt}[\log\rho(x(t),t)]+P(\rho(x(t),t))-P_s=-\rho_sF(x(t),t).
\end{align*}
For $T\le1$, we integrate from $t_0$ to $t_1$ for $t_1,t_2\in[0,T]$, and abbreviate $\rho(x(t),t)$ by $\rho(t)$, etc., we then obtain
\begin{align}\label{integral on F}
(\mu+\lambda)[\log\rho(s)-\log(\rho_s)]\Big |_{ t_0}^{t_1}+\int_{ t_0}^{t_1}[P(\tau)-P_s]d\tau=-\int_{ t_0}^{t_1}\rho_sF(\tau)d\tau.
\end{align}
Since $P$ is increasing, the integral of $P$ on the left side of \eqref{integral on F} is a dissipative term which is harmless. On the other hand, using the Poisson equation \eqref{elliptic eqn for F}, we can rewrite the integral of $F$ as 
\begin{align}\label{integral on rho F}
-\int_{ t_0}^{t_1}\rho_s F(\tau)d\tau&=-\int_{ t_0}^{t_1}\int_{\R^3}\Gamma_{x_j}(x(\tau)-y)\rho\dot{u}^{j}(y,\tau)dyd\tau\\
&\qquad+\int_{ t_0}^{t_1}\int_{\R^3}\Gamma_{x_j}(x(\tau)-y)\left[(P_s)_{x_j}\rho_s ^{-1}(\rho_s -\rho)\right]dyd\tau,\notag
\end{align}
where $\Gamma$ is the fundamental solution of the Laplace operator on $\R^3$. Invoking the method suggested in \cite{hoff95}, the first integral on the right side of \eqref{integral on rho F} can be bounded by
\begin{align}\label{bound on integral on rho F}
\Big|\int_{ t_0}^{t_1}\int_{\R^3}&\Gamma_{x_j}(x(t)-y)\rho\dot{u}^{j}(y,\tau)dyd\tau\Big|\notag\\
&\le||\Gamma_{x_j}*(\rho u^{j})(\cdot,t_1)||_{L^{\infty}}+||\Gamma_{x_j}*(\rho u^{j})(\cdot, t_0)||_{L^{\infty}}\\
&\,\,\,+\left |\int_{ t_0}^{t_1}\int_{\R^3}\Gamma_{x_j x_k}(x(\tau)-y)\left[u^{k}((x(\tau),\tau)-u^{k}(y,\tau)\right](\rho u^{j})(y,\tau)dyd\tau\right |.\notag
\end{align}
To bound the term $||\Gamma_{x_j}*(\rho u^{j})(\cdot,\tau)||_{L^{\infty}}$ for $\tau=t_0,t_1$ appeared in \eqref{bound on integral on rho F}, we make use of the $L^q$ estimate \eqref{Lq bound on u} given by Lemma~\ref{Lq bound on u lem} to obtain that
\begin{align*}
||\Gamma_{x_j}*(\rho u^{j})(\cdot,\tau)||_{L^{\infty}}&\le C(\|u\|_{L^2}+\|u\|_{L^2}^\xi\|u\|_{L^q}^{1-\xi})\\
&\le CC_0^{\theta'},
\end{align*}
for some positive constant $\theta'$ and $\xi$ depends on $q$ with $\xi\in(0,1)$. For the space-time integral on the right side of \eqref{bound on integral on rho F}, we can apply the estimate \eqref{Holder estimates in space time lem} on the semi-H\"{o}lder norm $\langle u(\cdot,\tau)\rangle^\frac{1}{2}$ to bound it in terms of $C_0$. Hence we have
\begin{align*}
\Big|\int_{ t_0}^{t_1}\int_{\R^3}&\Gamma_{x_j}(x(\tau)-y)\rho\dot{u}^{j}(y,\tau)dyd\tau\Big|\le CC_0^{\theta'},
\end{align*}
For the second integral on the right side of \eqref{integral on rho F}, it can be readily estimated as follows: for $\tau\in[t_0,t_1]\subset[0,T]$,
\begin{align*}
&\Big|\int_{\R^3}\Gamma_{x_j}(x(\tau)-y)\left[(P_s)_{x_j}\rho_s ^{-1}(\rho_s -\rho)\right]dy\notag\\
&\le C\int_{\R^3}|y|^{-2}|(\rho-\rho_s)(y,\tau)|^2 dy\Big|\notag\\
&\le C\left[\int_{|y|\le1}|y|^{-\frac{5}{2}}dy\right]^\frac{4}{5}\left[\int_{|y|\le1}|(\rho-\rho_s)(y,\tau)|^5 dy\right]^\frac{1}{5}\notag\\
&\qquad+C\left[\int_{|y|>1}|y|^{-4}dy\right]^\frac{1}{2}\left[\int_{|y|>1}|(\rho-\rho_s)(y,\tau)|^2 dy\right]^\frac{1}{2}\\
&\le C\left[\Phi(\tau)^\frac{1}{5}+\Phi(\tau)^\frac{1}{2}\right]\le CC_0^{\theta'}.
\end{align*}
Hence we conclude that
\begin{align*}
\int_{ t_0}^{t_1}F(\tau)d\tau\le CC_0^{\theta'},
\end{align*}
and by exploiting the smallness condition on $C_0$, we can see that the density $\rho$ should remain inside the interval $[\frac{1}{2}\rho_{2},2\rho_{1}]$ for all $t\in[0,T]$, provided that the initial density satisfies $\rho_0(x)\in[\rho_{2},\rho_{1}]$ for $x\in\R^3$. For case of $T>1$, the proof is just similar and simpler since we can ignore the initial time factor near $t=0$. This completes the proof of Theorem~\ref{a priori bounds}.

\section{Existence and long time behaviour of weak solutions: Proof of Theorem~\ref{existence of weak sol thm}}\label{proof of existence section}

In this section, we complete the proof of Theorem~\ref{existence of weak sol thm} by proving the global-in-time existence and long time behaviour solution to \eqref{NS}. To achieve our goals, we make use of the {\it a priori} estimates derived in Section~\ref{prelim section}. Specifically, we let initial data $(\rho_0,u_0)$ be given satisfying \eqref{smallness assumption} with $u_0\in H^\alpha$ for $\alpha\in(\frac{1}{2},1]$, and we fix those constants $\theta$, $M$ defined in Theorems~\ref{a priori bounds}. Upon choosing $(\rho^\eta_0,u^\eta_0)$ as a smooth approximation of $(\rho_0,u_0)$ which can be obtained by convolving $(\rho_,u_0)$ with the standard mollifying kernel of width $\eta>0$, we can apply the local existence results obtained by Nash \cite{nash} or Tani \cite{tani} to show that there is a smooth local solution $(\rho^\eta,u^\eta)$ of \eqref{NS} with initial data $(\rho^\eta_0,u^\eta_0)$ defined up to a positive time $T$. The {\em a priori} estimates of Theorem~\ref{a priori bounds} and Lemma~\ref{holder estimates on u lem} then apply to show that for $t\in(0,T]$ and $\tau>0$,
\begin{align}\label{bounds on approx sol 1}
\Phi(t)\le MC_{0}^{\theta}
\end{align}
\begin{align}\label{bounds on approx sol 2}
\frac{1}{2}\rho_{2}\le\rho^{\eta}(x,t)\le2\rho_{1}
\end{align}
\begin{align}\label{bounds on approx sol 3}
\sup_{0\le \tau<t}\|u^{\eta}(\cdot,\tau)\|_{H^{\alpha}}\le CC_0^\theta,\qquad \sup_{0\le \tau<t}\|u^{\eta}(\cdot,\tau)\|_{L^{r}}\le C(r)C_0^\theta,
\end{align}
\begin{align}\label{bounds on approx sol 4}
\langle u^{\eta}\rangle^{\frac{1}{2},\frac{1}{4}}_{\R^3 \times [t,\infty)},\langle F^{\eta}\rangle^{\frac{1}{2},\frac{1}{4}}_{\R^3 \times [t,\infty)},\langle \omega^{\eta}\rangle^{\frac{1}{2},\frac{1}{4}}_{\R^3 \times [t,\infty)}\leq C(\tau)C_{0}^{\theta},\;\;\;t\ge\tau,
\end{align}
where $\Phi(t)$ is defined by \eqref{def of Phi} but with $(\rho,u)$ replaced by $(\rho^\eta,u^\eta)$. By the compactness argument given in \cite{suen13b, suen14, suen16}, the bounds \eqref{bounds on approx sol 1}-\eqref{bounds on approx sol 4} will then be sufficient for showing that, there is a sequence $\eta_k\to 0$ and functions $u, \rho$ such that as $k\to\infty$,
\begin{align}\label{strong convergence on u}
\mbox{ $u^{\eta_k}\rightarrow u$ uniformly on compact sets in $\R^3\times (0,\infty)$};\end{align}
\begin{align}\label{weak convergence 1}
\mbox{$\nabla u^{\eta_k}(\cdot,\tau),\nabla\omega^{\eta_k}(\cdot,\tau)\rightharpoonup\nabla u(\cdot,\tau)\nabla\omega(\cdot,\tau)$ weakly in $L^2(\R^3)$
for all $t>0$; }
\end{align}
\begin{align}\label{weak convergence 2}
&\mbox{$\sigma^{\frac{1-\alpha}{2}}\dot{u}^{\eta_k},\sigma^{\frac{2-\alpha}{2}}\nabla\dot{u}^{\eta_k},\rightharpoonup\sigma^{\frac{1-\alpha}{2}}\dot{u},\sigma^{\frac{2-\alpha}{2}}\nabla\dot{u}$ weakly in $L^2(\R^3\times[0,\infty))$;}
\end{align}
and
\begin{align}\label{strong convergence on rho}
\mbox{$\rho^{\eta_k}(\cdot,t)\to \rho(\cdot,t)$ strongly in $L^2_{loc}(\R^3)$ for every $t\ge 0$,} 
\end{align}
where $\sigma(t)=\min\{1,t\}$ and $\alpha\in(\frac{1}{2},1]$. The limiting functions $(\rho,u)$ then inherit the bounds in \eqref{energy bound on weak sol}-\eqref{pointwise bound on rho weak sol}. It is also clear from the modes of convergence described in \eqref{strong convergence on u}-\eqref{strong convergence on rho} that $(\rho,u)$ satisfies the weak forms \eqref{WF1}-\eqref{WF2} of \eqref{NS}.  

To prove that $\rho$ is piecewise $C^{\beta(t)}$ on $[0,T]$ for the case when $\rho_0$ is piecewise $C^{\beta_0}$, it involves an argument which is based on the observation of ``enhanced regularity" gained by the effective viscous flux $F$, where $F$ is given by \eqref{def of F and omega} with $\rho$ and $u$ being the limiting functions in \eqref{strong convergence on u}-\eqref{strong convergence on rho}. Details of the proof can be found in \cite{hoff02, suen20b, suen20mhd} and we only give a sketch here. We recall the decomposition of $u$ which is given by $u=u_{F}+u_{P}$, where $u_{F}$, $u_{P}$ satisfy
\begin{align}\label{decomposition of u}
\left\{
 \begin{array}{lr}
(\mu+\lambda)\Delta (u_{F})^{j}=(\rho_sF)_{x_j} +(\mu+\lambda)(\omega)^{j,k}_{x_k}\\
(\mu+\lambda)\Delta (u_P)^{j}=(P-P_s)_{x_j}.\\
\end{array}
\right.
\end{align}
Using the estimates \eqref{bound on F and omega in terms of u} on $F$ and $\omega$, and together with \eqref{energy bound on weak sol}, we readily have
\begin{align}\label{bound on nabla uF}
\int_{0}^{T}||\nabla u_{F}(\cdot,\tau)||_{\infty}d\tau\le C(T)
\end{align}
for some positive constant $C(T)$. On the other hand, in order to control $u_P$, by applying the results from Bahouri-Chemin \cite{BC94} on Newtonian potential, we can make use of the pointwise bounds \eqref{condition on weak sol4} on $\rho$ to show that $u_P$ is, in fact, log-Lipschitz with bounded log-Lipschitz seminorm. This is sufficient to guarantee that the integral curve $x(y,t)$ as defined by 
\begin{align*}
\left\{ \begin{array}
{lr} \dot{x}(t)
=u(x(t),t)\\ x(0)=y,
\end{array} \right.
\end{align*}
is H\"{o}lder-continuous in $y$. Upon integrating the mass equation along integral curves $x(t,y)$ and $x(t,z)$, subtracting and recalling the definition \eqref{def of F and omega} of $F$, we arrive at 
\begin{align}\label{Holder norm estimate of rho}
\log\rho&(x(T,y),T)-\log\rho(x(T,z),T)\notag\\
&=\log\rho_0(y)-\log\rho_0(z)
+\int_0^T [P(\rho(x(\tau,y),\tau)-P(\rho(x(\tau,z),\tau)]d\tau \\
&\qquad\qquad\qquad\qquad\qquad\qquad\quad+\int_0^t[F(x(\tau,y),\tau)-F(x(\tau,z),\tau)]d\tau.\notag
\end{align}
Since the pressure $P(\cdot)$ is an increasing function, the integral involving $P$ on the right side of the above can be dropped out. Moreover, with the help of the estimate \eqref{bound on F and omega in terms of u} on $F$ and the H\"{o}lder-continuity of $x(y,t)$, the third term can be bounded by $C(T)$. Hence we can conclude from \eqref{Holder norm estimate of rho} that $\rho(\cdot,\tau)$ is $C^{\beta(t)}$ on $[0,T]$ for some $\beta(t)\in(0,\beta_0]$ with bounded modulus.

Finally, it remains to prove the long time behaviour of $\rho$, $u$ and $F$ as described in \eqref{long time 0}-\eqref{long time 3}. To show \eqref{long time 0}, using the bounds \eqref{bounds on approx sol 1}-\eqref{bounds on approx sol 2} and recalling the estimate \eqref{estimate on A large time}, for all $q_1\ge\frac{10}{3}$ and $t>0$, we have
\begin{align*}
\int_0^t\intox|\rho-\rho_s|^{q_1}(x,\tau)dxd\tau\le C\int_0^t\intox|\rho-\rho_s|^\frac{10}{3}(x,\tau)dxd\tau\le CC_0^\theta,
\end{align*}
hence we have
\begin{align*}
\frac{1}{t}\int_0^t\intox|\rho-\rho_s|^{q_1}(x,\tau)dxd\tau\le \frac{CC_0^\theta}{t},
\end{align*}
and \eqref{long time 0} follows by taking $t\to\infty$.

To show \eqref{long time 2}, for $q_2\in[\frac{10}{3},6]$ and $t\ge1$, we define the functional $\mathcal{F}^\eta(t)$ by
\begin{align*}
\mathcal{F}^\eta(t)=\intox|F^\eta(x,t)|^{q_2}dx,
\end{align*}
where $ F^\eta=\rho_s^{-1}(\mu+\lambda)\divv(u)^\eta-\rho_s^{-1}(P(\rho^\eta)-P(\rho_s ))$. From the bound \eqref{bound on Phi large time}, we readily have
\begin{align}\label{bound on approx nabla u and rho}
\int_1^\infty\intox|\nabla u^\eta|^4dxd\tau+\int_1^\infty\intox|\rho^\eta-\rho_s|^\frac{10}{3}dxd\tau\le CC_0^\theta.
\end{align}
Using the bounds \eqref{bounds on approx sol 1}, \eqref{bounds on approx sol 2} and \eqref{bound on approx nabla u and rho} and the estimate \eqref{bound on F and omega in terms of u} on $F$, the Sobolev inequality \eqref{GN1} gives
\begin{align*}
0\le \mathcal{F}^\eta(t)&\le C\Big(\intox|F^\eta(x,t)|^2dx\Big)^\frac{6-q_2}{4}\Big(\intox|\nabla F^\eta(x,t)|^2dx\Big)^\frac{3q_2-6}{4}\\
&\le C\Big(\intox(|\nabla u^\eta|^2+|\rho^\eta-\rho_s|^2)(x,t)dx\Big)^\frac{6-q_2}{4}\\
&\qquad\times\Big(\intox(|\dot{u}^\eta|^2+|\nabla u^\eta|^2+|\rho^\eta-\rho_s|^4)(x,t)dx\Big)^\frac{3q_2-6}{4}\\
&\le CC_0^\theta,\qquad t\ge1,
\end{align*}
and since $q_2\ge\frac{10}{3}$, 
\begin{align*}
\int_1^\infty \mathcal{F}^\eta(\tau)d\tau&\le C\int_1^\infty\Big(\intox|F^\eta(x,\tau)|^2dx\Big)^\frac{6-q_2}{4}\Big(\intox|\nabla F^\eta(x,\tau)|^2dx\Big)^\frac{3q_2-6}{4}d\tau\\
&\le C\Big(\sup_{t\ge1}\intox(|\dot{u}^\eta|^2+|\nabla u^\eta|^2+|\rho^\eta-\rho_s|^4)(x,t)dx\Big)^{\frac{3q_2-6}{4}-1}\\
&\qquad\times\Big(\sup_{t\ge1}\intox(|\nabla u^\eta|^2+|\rho^\eta-\rho_s|^2)(x,t)dx\Big)^{\frac{6-q_2}{4}}\\
&\qquad\times\Big(\int_1^\infty\intox(|\dot{u}^\eta|^2+|\nabla u^\eta|^2+|\rho^\eta-\rho_s|^4)dxd\tau\Big)\\
&\le CC_0^\theta.
\end{align*}
Recalling the fact that 
$$|F^\eta_t|=|\dot{F}^\eta-u^\eta\cdot\nabla F^\eta|\le C(|\nabla u^\eta|+|\nabla u^\eta|^2+|\nabla \dot{u}^\eta|+|u^\eta||\nabla F^\eta|),$$
we further have
\begin{align*}
&\int_1^\infty\Big|\frac{d \mathcal{F}^\eta}{dt}(\tau)\Big|d\tau\\
&\le C\int_1^\infty\intox |F^\eta|^{q_2-1}| |(\dot{F}^\eta-u^\eta\cdot\nabla F^\eta)|dxd\tau\\
&\le C\Big(\int_1^\infty\intox|F^\eta|^{2q_2-2}dxd\tau\Big)^\frac{1}{2}\Big(\int_1^\infty\intox(|\nabla u^\eta|^2+|\nabla u^\eta|^4)dxd\tau\Big)^\frac{1}{2}\\
&\qquad+C\Big(\int_1^\infty\intox |F^\eta|^{2q_2-2}dxd\tau\Big)^\frac{1}{2}\Big(\int_1^\infty\intox(|\nabla \dot{u}^\eta|^2+|u^\eta|^2|\nabla F^\eta|^2)dxd\tau\Big)^\frac{1}{2}\\
&\le CC_0^\theta.
\end{align*}
By the same method as shown above, the integral $\dis \int_1^\infty\intox|F^\eta|^{2q_2-2}dxd\tau$ is bounded by $CC_0^\theta$ for $q_2\ge\frac{8}{3}$. Moreover, using the bounds \eqref{bounds on approx sol 1} and \eqref{bound on approx nabla u and rho}, the space-time integrals on $|\nabla u^\eta|^2$, $|\nabla u^\eta|^4$ and $|\nabla\dot{u}^\eta|^2$ can be bounded by $CC_0^\theta$. To bound the integral $\dis\int_1^\infty\intox|u^\eta|^2|\nabla F^\eta|^2dxd\tau$, using \eqref{GN1} and \eqref{bound on F and omega in terms of u}, together with the bounds \eqref{bounds on approx sol 1}, \eqref{bounds on approx sol 2} and \eqref{bound on approx nabla u and rho}, we have
\begin{align*}
&\int_1^\infty\intox|u^\eta|^2|\nabla F^\eta|^2dxd\tau\\
&\le C\int_1^\infty\Big(\intox|u^\eta|^6dx\Big)^\frac{1}{3}\Big(\intox|\nabla F^\eta|^3dx\Big)^\frac{2}{3}d\tau\\
&\le C\int_1^\infty\Big(\intox|\nabla u^\eta|^2dx\Big)\Big(\intox(|\dot{u}|^3+|\nabla u^\eta|^3+|\rho^\eta-\rho_s|^6)dx\Big)^\frac{2}{3}d\tau\\
&\le C\int_1^\infty\Big(\intox|\nabla u^\eta|^2dx\Big)\Big(\intox|\dot{u}^\eta|^2dx\Big)^\frac{1}{2}\Big(\intox|\nabla\dot{u}^\eta|^2dx\Big)^\frac{1}{2}d\tau+CC_0^\theta\\
&\le CC_0^\theta.
\end{align*}
We thus obtain the following estimate on $\dis\frac{d \mathcal{F}^\eta}{dt}$:
\begin{align*}
\int_1^\infty\Big|\frac{d \mathcal{F}^\eta}{dt}(\tau)\Big|d\tau\le CC_0^\theta.
\end{align*}
Hence there is a subsequence $\eta'$ such that $\mathcal{F}^{\eta'}\to \mathcal{F}$ a.e. on $[1,\infty)$ and $\mathcal{F}$ has bounded variation. For each $T>0$, by the bounded convergence theorem, 
\begin{align*}
\int_1^T \mathcal{F}(\tau)d\tau=\lim_{\eta'\to0}\int_1^T \mathcal{F}^{\eta'}(\tau)d\tau\le CC_0^\theta,
\end{align*}
which gives $\mathcal{F}\in L^1[1,\infty)$. Therefore it implies $\dis\lim_{T\to\infty} \mathcal{F}(T)=0$. On the other hand, by the convergences \eqref{weak convergence 1} and \eqref{strong convergence on rho}, we also have 
$$\mbox{$F^{\eta'}(\cdot,t)\rightharpoonup F(\cdot,t)$ weakly in $L^2$ for all $t>0$.}$$
Since $F^{\eta'}(\cdot,t)$ are uniformly bounded in $L^{q_2}$ for all $t\ge1$, passing to a subsequence of $\eta'$ if necessary, it further implies that
$$\mbox{$F^{\eta'}(\cdot,t)\rightharpoonup F(\cdot,t)$ weakly in $L^{q_2}$ for all $t\ge1$.}$$
Therefore for almost all $t\ge1$,
\begin{align*}
\intox|F(x,t)|^{q_2}dx\le \liminf_{\eta'\to0}\intox|F^{\eta'}(x,t)|^{q_2}dx=\mathcal{F}(t).
\end{align*}
For each $\varepsilon>0$, there is a time $T>1$ such that $\mathcal{F}(t)\le\varepsilon$ when $t\ge T$, then by the H\"{o}lder continuity of $F^\eta$ in $t$ given by \eqref{bounds on approx sol 4}, for all $t\ge T$,
\begin{align*}
\intox |F(x,t)|^{q_2}dx\le C\varepsilon.
\end{align*}
Since $\varepsilon$ can be arbitrary, the above estimate shows that $F(\cdot,t)\to0$ in $L^{q_2}$ as $t\to\infty$. The case for $\omega$ follows by the same argument as for $F$. This proves \eqref{long time 2}.

The long time behaviour of $\rho$ and $u$ given by \eqref{long time 3} can be proved in the same way as given in \cite{hoff95}, so we now focus on the long time average convergence \eqref{long time 1} for $u_F$ and $F$. For $\alpha\in(\frac{1}{2},1]$, we choose $r\in(3,\infty)$ so that $$\alpha-\frac{1}{2}>1-\frac{3}{r}.$$ Define $\tilde\alpha=1-\frac{3}{r}$, then we have $\alpha-\frac{1}{2}>\tilde\alpha$. Using the Sobolev imbeddings \eqref{GN1} and \eqref{holder bound general}, together with \eqref{bound on F and omega in terms of u}-\eqref{bound on u in terms of F and omega}, we have for $\tau\in(0,T]$ that
\begin{align}\label{bound on uF and F by dot u}
\|u_F(\cdot,\tau)\|_{C^{1+\tilde\alpha}}&+\|F(\cdot,\tau)\|_{C^{\tilde\alpha}}\\
&\le C\|\dot{u}(\cdot,\tau)\|_{L^r}\notag\\
&\le C\Big(\intox|\dot{u}(x,\tau)|^2dx\Big)^\frac{6-r}{4}\Big(\intox|\nabla\dot{u}(x,\tau)|^2dx\Big)^\frac{3r-6}{4}.\notag
\end{align}
Hence for $T\le1$, we integrate \eqref{bound on uF and F by dot u} and apply \eqref{bounds on approx sol 1} to obtain
\begin{align*}
&\int_0^T(\|u_F(\cdot,\tau)\|_{C^{1+\tilde\alpha}}+\|F(\cdot,\tau)\|_{C^{\tilde\alpha}})d\tau\\
&\le C\int_0^T\Big(\intox|\dot{u}(x,\tau)|^2dx\Big)^\frac{6-r}{4}\Big(\intox|\nabla\dot{u}(x,\tau)|^2dx\Big)^\frac{3r-6}{4}d\tau\\
&\le C\int_0^T\Big(\tau^{1-\alpha}\intox|\dot{u}(x,\tau)|^2dx\Big)^\frac{6-r}{4}\Big(\tau^{2-\alpha}\intox|\nabla\dot{u}(x,\tau)|^2dx\Big)^\frac{3r-6}{4}\\
&\qquad\qquad\times \tau^{\frac{s}{2}+\frac{3}{2r}-\frac{5}{4}}d\tau\\
&\le C\Big(\int_0^T\intox \tau^{1-\alpha}|\dot{u}(x,\tau)|^2dxd\tau\Big)^\frac{6-r}{4}\Big(\int_0^T\intox \tau^{2-\alpha}|\nabla\dot{u}(x,\tau)|^2dxd\tau\Big)^\frac{3r-6}{4}\\
&\qquad\qquad\times \Big(\int_0^T\tau^{s+\frac{3}{r}-\frac{5}{2}}d\tau\Big)^\frac{1}{2}\\
&\le CC_0^\theta T^{\alpha-\tilde\alpha-\frac{1}{2}},
\end{align*}
where the last inequality follows since $s+\frac{3}{r}-\frac{5}{2}=\alpha-\tilde\alpha-\frac{3}{2}>-1$. For $T>1$, we can repeat the above computation by integrating from 1 to $T$, which gives
\begin{align*}
\int_1^T(\|u_F(\cdot,\tau)\|_{C^{1+\tilde\alpha}}+\|F(\cdot,\tau)\|_{C^{\tilde\alpha}})d\tau\le CC_0^\theta T^{\frac{1}{2}}.
\end{align*}
If we pick $\nu>\max\{\frac{1}{2},\alpha-\frac{1}{2}-\tilde\alpha\}$, then we have
\begin{align*}
\frac{1}{T^\nu}\int_0^T(\|u_F(\cdot,\tau)\|_{C^{1+\tilde\alpha}}+\|F(\cdot,\tau)\|_{C^{\tilde\alpha}})d\tau\le CC_0^\theta(T^{\frac{1}{2}-\nu}+T^{\alpha-\tilde\alpha-\frac{1}{2}-\nu}),
\end{align*}
and hence by taking $T\to\infty$, the long time average convergence \eqref{long time 1} holds. This completes the proof of Theorem~\ref{existence of weak sol thm}.

\begin{rem}\label{explanation on the time integral bound on nabla u}
Under the assumption that the initial density $\rho_0$ is piecewise H\"{o}lder continuous, by Theorem~\ref{existence of weak sol thm}, we can see that $\rho$ is piecewise H\"{o}lder continuous for positive time. As a consequence, it further implies that $\nabla u\in L^1((0,T);W^{1,\infty})$. To see how it works, we make use of the Poisson equation \eqref{decomposition of u}$_2$ again and apply properties of Newtonian potentials to conclude that the $C^{1+\beta(t)}(\R^3)$ norm of $u_P$ remains finite in finite time, hence there exists $C(T)>0$ such that the following bound holds for $\nabla u_P$ as well:
\begin{align}\label{bound on nabla uP}
\int_{0}^{T}||\nabla u_{P}(\cdot,\tau)||_{L^\infty}d\tau\le C(T).
\end{align}
Together with the bound \eqref{bound on nabla uF} on $u_F$, we conclude the estimate \eqref{bound on time integral on nabla u} for $\|\nabla u\|_{L^\infty}$, hence the regularity condition \eqref{spaces for weak sol 1} holds for the weak solution to \eqref{NS} with piecewise H\"{o}lder continuous initial density. The results of Theorem~\ref{Main thm} therefore do apply to this class of weak solutions, which includes solutions with Riemann-like initial data.
\end{rem}

\section{Stability and uniqueness of weak solutions: Proof of Theorem~\ref{Main thm}}\label{proof of main thm section}

In this section, we address the stability of weak solutions to \eqref{NS} with respect to initial data and steady state solutions and give the proof of Theorem~\ref{Main thm}. As mentioned before, weak solutions with minimal regularity are best compared in a Lagrangian framework. In other words, we aim at comparing the {\it instantaneous states} of corresponding fluid particles in two different solutions. To this end, we employ some delicate estimates on {\it particle trajectories}. More precisely, for $T>0$, the bound \eqref{bound on nabla uF} and \eqref{bound on nabla uP} guarantee the existence and uniqueness of the mapping $X(y,t,t')\in C(\R^3\times[0,T]^2)$ satisfying
\begin{align}\label{integral curve X}
\left\{ \begin{array}
{lr} \dis\frac{\partial X}{\partial t}(y,t,t')
=u(X(y,t,t'),t)\\ X(y,t',t')=y
\end{array} \right.
\end{align}
where $(\rho,u,B)$ is a weak solution to \eqref{NS}. Moreover, the mapping $X(\cdot,t,t')$ is Lipschitz on $\R^3$ for $(t,t')\in[0,T]^2$. The results are given in the following proposition and the proof can be found in \cite{hoff06}.
\begin{prop}\label{prop on X}
Let $T>0$ and $u$ satisfy \eqref{spaces for weak sol 1}. Then there is a unique function $X\in C(\R^3\times[0,T]^2$) satisfying \eqref{integral curve X}. In particular, $X(\cdot,t,t')$ is Lipschitz on $\R^3$ for $(t,t')\in[0,T]^2$, and there is a constant $C$ such that
\begin{equation*}
\Big\|\frac{\partial X}{\partial y}(\cdot,t,t')\Big\|_{L^\infty}\le C,\qquad (t,t')\in[0,T]\times[0,T].
\end{equation*}
\end{prop}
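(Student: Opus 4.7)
The plan is to rewrite the ODE \eqref{integral curve X} in its equivalent integral form
$$X(y,t,t') = y + \int_{t'}^{t} u(X(y,s,t'),s)\,ds,$$
and then set up a Picard-type fixed point argument. The central analytic input is that \eqref{spaces for weak sol 1} gives $\nabla u\in L^1((0,T);L^\infty(\R^3))$, so the \emph{spatial} Lipschitz constant of $u(\cdot,s)$ is an $L^1$ function of $s$. This is precisely the Carathéodory-type hypothesis needed to apply the classical theory of ODEs with time-measurable, spatially Lipschitz right-hand sides. The continuity of $u$ on $\R^3\times(0,T]$ ensures that $s\mapsto u(X(s),s)$ is measurable for any continuous $X(\cdot)$, and together with $u\in L^\infty_{loc}((0,T];L^\infty)$ yields local integrability of $u(X(\cdot),\cdot)$ on any subinterval of $(0,T]$.

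For existence and uniqueness, I would fix $(y,t')\in\R^3\times[0,T]$ and, on the space $C([0,T];\R^3)$ equipped with a weighted sup norm (or equivalently iterating on short subintervals), show that the map $\Phi[X](t)=y+\int_{t'}^t u(X(s),s)\,ds$ is a contraction. Indeed, for two candidates $X_1,X_2$,
$$|X_1(t)-X_2(t)|\le \Big|\int_{t'}^{t}\|\nabla u(\cdot,s)\|_{L^\infty}\,|X_1(s)-X_2(s)|\,ds\Big|,$$
so Gronwall's inequality applied in both directions of $t$ forces $X_1\equiv X_2$ once the two agree at $t'$. Existence is then obtained either by Picard iteration on the integral equation or by smoothing $u$ in time via a mollifier $u^\eta$, solving the classical ODE for $X^\eta$, and passing to the limit: the uniform bound $\|\nabla u^\eta\|_{L^1((0,T);L^\infty)}\le\|\nabla u\|_{L^1((0,T);L^\infty)}$ yields equicontinuity of $\{X^\eta(\cdot,t,t')\}$, and Arzelà--Ascoli plus the uniqueness above identifies the limit as the desired $X$.

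The Lipschitz bound on $X(\cdot,t,t')$ follows from the very same Gronwall argument, applied to two trajectories starting from different initial points $y_1,y_2$:
$$|X(y_1,t,t')-X(y_2,t,t')|\le |y_1-y_2|+\Big|\int_{t'}^{t}\|\nabla u(\cdot,s)\|_{L^\infty}|X(y_1,s,t')-X(y_2,s,t')|\,ds\Big|,$$
whence
$$\Big\|\tfrac{\partial X}{\partial y}(\cdot,t,t')\Big\|_{L^\infty}\le \exp\Big(\int_{0}^{T}\|\nabla u(\cdot,s)\|_{L^\infty}\,ds\Big)=:C<\infty,$$
which gives the stated constant $C$ depending only on $\|\nabla u\|_{L^1((0,T);L^\infty)}$. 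Joint continuity of $X$ in $(y,t,t')\in\R^3\times[0,T]^2$ comes from: continuity in $y$ (just proved), continuity in $t$ through the integral representation and local integrability of $u\circ X$, and continuity in $t'$ via the identity $X(y,t,t')=X(X(y,t'',t'),t,t'')$ together with continuity in the other two slots.

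The main obstacle is the degeneration of $u$ near $t=0$: since \eqref{spaces for weak sol 1} only provides $u\in L^\infty_{loc}((0,T];L^\infty)$, the pointwise supremum of $|u|$ may not be integrable down to $0$, so strictly speaking one must verify that $\int_0^t u(X(s),s)\,ds$ is well-defined. I would handle this by first solving \eqref{integral curve X} on $[\varepsilon,T]$ for every $\varepsilon>0$ using the above machinery and the local $L^\infty$ bound, then using the uniform-in-$\varepsilon$ spatial Lipschitz control (which depends only on the $L^1_t L^\infty_x$ norm of $\nabla u$) together with the equicontinuity of the trajectories as $\varepsilon\downarrow 0$, to extend $X$ continuously to $t=0$. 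Once this is done, the construction, uniqueness, and the quantitative Lipschitz bound all survive the limit, completing the proof.
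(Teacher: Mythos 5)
Your argument is correct and is essentially the proof the paper relies on: the paper defers this proposition to Hoff \cite{hoff06}, whose proof is exactly this Picard/Carath\'eodory fixed-point argument on the integral form of \eqref{integral curve X}, using that the spatial Lipschitz constant $\|\nabla u(\cdot,s)\|_{L^\infty}$ is in $L^1(0,T)$, with Gronwall giving uniqueness, the bound $\|\partial X/\partial y\|_{L^\infty}\le\exp\bigl(\int_0^T\|\nabla u(\cdot,s)\|_{L^\infty}\,ds\bigr)$, and joint continuity via the cocycle identity. One minor remark: since $W^{1,\infty}=L^\infty\cap D^{1,\infty}$ in this paper's notation, the hypothesis $u\in L^1((0,T);W^{1,\infty})$ already yields $\|u(\cdot,s)\|_{L^\infty}\in L^1(0,T)$, so your $\varepsilon$-truncation near $t=0$ is a harmless extra precaution rather than a necessity.
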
 
With respect to velocities $u$ and $\bar{u}$, for $y\in\R^3$, we let $X$, $\bar{X}$ be two integral curves given by
\begin{align*}
\left\{ \begin{array}
{lr} \dis\frac{\partial X}{\partial t}(y,t,t')
=u(X(y,t,t'),t)\\ X(y,t',t')=y
\end{array} \right.
\end{align*}
and
\begin{align*}
\left\{ \begin{array}
{lr} \dis\frac{\partial \bar{X}}{\partial t}(y,t,t')
=\bar{u}(\bar{X}(y,t,t'),t)\\ \bar{X}(y,t',t')=y.
\end{array} \right.
\end{align*}
We then define $S(x,t)$, $S^{-1}(x,t)$ by
\begin{equation}\label{def of S}
S(x,t)=\bar{X}(X(x,0,t),t,0),
\end{equation}
and 
\begin{equation}\label{def of S-1}
S^{-1}(x,t)=X(\bar{X}(x,0,t),t,0).
\end{equation}
The following proposition provides some properties of $S$ and $S^{-1}$, which will be crucial for later analysis.
\begin{prop}\label{prop on S}
Let $S$ and $S^{-1}$ be as given in \eqref{def of S}-\eqref{def of S-1}. Then we have:

\noindent{\bf (a)} $S^{\pm1}$ is continuous on $R^3\times[0,T]$ and Lipschitz continuous on $R^3\times[\tau,T]$ for all $\tau>0$, and there is a constant C such that $$\|\nabla S^{\pm1}(\cdot,\tau)\|_{L^\infty}\le C,\qquad t\in[0,T];$$

\noindent{\bf (b)} $(S_t+\nabla S u)(x,t)=\bar{u}(S(x,t),t)$ a.e. in $\R^3\times(0,T);$

\noindent{\bf (c)} $\bar{\rho}(S(x,t),t)\rho_0(X(x,0,t))\det\nabla S(x,t)=\rho(x,t)\bar{\rho}_0(X(x,0,t))$ a.e. in $\R^3\times(0,T)$;

\noindent{\bf (d)} If $u, u \in L^2(\R^3 \times (0,T))$, then for all $t\in(0,T)$,
\begin{align}
\intox|x-S(x,t)|^2dx&\le Ct\int_0^t\intox|u(x,\tau)-\bar{u}(S(x,\tau),\tau)|^2dxd\tau,\label{bound on S}\\
\intox|x-S^{-1}(x,t)|^2dx&\le Ct\int_0^t\intox|u(S^{-1}(x,\tau),\tau)-\bar{u}(x,\tau)|^2dxd\tau.\label{bound on S -1}
\end{align}
\end{prop}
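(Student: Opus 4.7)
The plan is to prove parts (a)--(d) in order, leveraging Proposition~\ref{prop on X} throughout. For part (a), $S$ and $S^{-1}$ are compositions of the Lipschitz maps $X(\cdot,t,t')$ and $\bar X(\cdot,t,t')$, whose Lipschitz constants on $\R^3$ are controlled by $\|\nabla u\|_{L^1((0,T);L^\infty)}$ and $\|\nabla\bar u\|_{L^1((0,T);L^\infty)}$ from \eqref{spaces for weak sol 1}; the chain rule then yields the claimed uniform bound $\|\nabla S^{\pm1}(\cdot,t)\|_{L^\infty}\le C$ on $[\tau,T]$, and joint continuity on $\R^3\times[0,T]$ is inherited directly from $X$ and $\bar X$.

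The key identity in part (b) is obtained by fixing $x_0\in\R^3$ and differentiating the curve $\phi(t):=S(X(x_0,t,0),t)$. The semigroup property $X(X(x_0,t,0),0,t)=x_0$ collapses the definition of $S$ to $\phi(t)=\bar X(x_0,t,0)$, so $\dot\phi(t)=\bar u(\phi(t),t)=\bar u(S(X(x_0,t,0),t),t)$. On the other hand, the chain rule gives $\dot\phi(t)=(S_t+\nabla S\,u)(X(x_0,t,0),t)$; equating the two expressions yields (b) at $x=X(x_0,t,0)$, and hence almost everywhere since $X(\cdot,t,0)$ is a bi-Lipschitz bijection.

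Part (c) is a Liouville-type identity for the composition $S$. The transport equations for $\rho$ and $\bar\rho$ give, along their respective flows, $\rho(X(y,t,0),t)\det\nabla_y X(y,t,0)=\rho_0(y)$ and $\bar\rho(\bar X(y,t,0),t)\det\nabla_y\bar X(y,t,0)=\bar\rho_0(y)$. Setting $y=X(x,0,t)$, using the inverse-function identity $\det\nabla_x X(x,0,t)=[\det\nabla_y X(y,t,0)|_{y=X(x,0,t)}]^{-1}$, and the chain-rule factorisation $\det\nabla_x S=\det\nabla_y\bar X|_{y=X(x,0,t)}\cdot\det\nabla_x X(\cdot,0,t)$, the two identities combine to give (c). For part (d), I apply part (b) along a trajectory $x(\tau)=X(x_0,\tau,0)$: the quantity $x(\tau)-S(x(\tau),\tau)$ vanishes at $\tau=0$ and has $\tau$-derivative $u(x(\tau),\tau)-\bar u(S(x(\tau),\tau),\tau)$, so Cauchy--Schwarz gives
\begin{equation*}
|x(t)-S(x(t),t)|^2\le t\int_0^t|u(x(\tau),\tau)-\bar u(S(x(\tau),\tau),\tau)|^2\,d\tau.
\end{equation*}
Integrating in $x_0\in\R^3$ and changing variables $x=X(x_0,t,0)$ on the left and $y=X(x_0,\tau,0)$ on the right, with Jacobians bounded above and below by part (c) and the pointwise bounds \eqref{condition on weak sol4} on $\rho,\bar\rho$, yields \eqref{bound on S}; the estimate \eqref{bound on S -1} follows by interchanging the roles of the two solutions.

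The main technical obstacle is justifying the chain-rule and change-of-variable manipulations rigorously under the limited regularity \eqref{spaces for weak sol 1}--\eqref{spaces for weak sol 2} of $u$ and $\bar u$; however, the $L^1_t L^\infty_x$ integrability of $\nabla u,\nabla\bar u$, the Lipschitz control on $X,\bar X$ from Proposition~\ref{prop on X}, and the two-sided pointwise bounds on the densities are exactly what is needed to interpret all identities almost everywhere and to control the relevant Jacobians uniformly in time.
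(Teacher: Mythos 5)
Your argument is correct and is essentially the standard proof of this proposition, which the paper itself does not reproduce but simply cites from Hoff (SIAM J. Math. Anal. 37 (2006), p.~1752); your steps (composition of Lipschitz flows for (a), differentiating $\phi(t)=S(X(x_0,t,0),t)=\bar X(x_0,t,0)$ for (b), combining the two Lagrangian mass identities with the Jacobian factorisation of $\nabla S$ for (c), and integrating the trajectory identity with Cauchy--Schwarz and bounded Jacobian changes of variables for (d)) match that source. The only point deserving slightly more care is the Lipschitz continuity in \emph{time} on $\R^3\times[\tau,T]$ in part (a), which uses $u,\bar u\in L^\infty_{loc}((0,T];L^\infty)$ from \eqref{spaces for weak sol 1}, and the a.e.\ justification of the chain rule under this regularity, which you correctly flag.
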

\begin{proof}
The proof can be found in \cite[pp. 1752]{hoff06}.
\end{proof}

We are now ready to give the proof of Theorem~\ref{Main thm}. Throughout this section, $C$ always denotes a generic positive constant which depends on the parameters $P$, $f$, $\lambda$, $\mu$, $T$, $a$, $r$ as described in Theorem~\ref{Main thm}. First, in view of the weak form \eqref{WF2} of the momentum equation, we let $\psi:\R^3\times[0,T]\rightarrow\R^3$ be a test function satisfying
\begin{align}\label{weak form for rho}
&-\intox \rho_0(x)u_0(x)\psi(x,0)dx\notag\\
&=\int_0^T\intox \Big[\rho u\cdot(\psi_t+\nabla\psi u)+(P(\rho)-P_s)\divv(\psi)-\mu\nabla u^j\cdot\nabla\psi^j\\
&\qquad\qquad\qquad-\lambda\divv(u)\divv(\psi)-\lambda(\divv(u)\divv(\psi)+(\rho - \rho_s)f\cdot\psi\Big]dxd\tau,\notag
\end{align}
where we used the expressions \eqref{condition on f} and \eqref{eqn for steady state}$_1$ for $\nabla P_s$. Define $\bar\psi=\psi\circ S^{-1}$. Then we have
\begin{align}\label{weak form for bar rho}
&-\intox \bar{\rho}_0(x)\bar{u}_0(x)\bar{\psi}(x,0)dx\notag\\
&=\int_0^T\intox \Big[\bar{\rho}\bar{u}\cdot(\bar{\psi}_t+\nabla\bar{\psi}\bar{u})+(P(\bar{\rho})-\bar{P_s})\divv(\bar{\psi})-\mu\nabla\bar{u}^j\cdot\nabla\bar{\psi}^j\\
&\qquad\qquad\qquad-\lambda\divv(\bar{u})\divv(\bar{\psi})-\lambda(\divv(\bar{u})\divv(\bar{\psi})+(\bar{\rho} - \bar{\rho}_s)\bar{f}\cdot\bar{\psi}\Big]dxd\tau\notag,
\end{align}
with $\bar{P_s}=P(\bar{\rho}_s)$. For the term involving $\bar{\psi}_t$, we can rewrite it as
\begin{align*}
&\intoxt \bar{\rho}\bar{u}\cdot(\bar{\psi}_t+\nabla\bar{\psi}\bar{u})dxd\tau\\
&=\intoxt \bar{\rho}(S)\bar{u}(S)\cdot(\bar{\psi}_t(S)+\nabla\bar{\psi}\bar{u}(S))|\det(\nabla S)|dxd\tau\\
&=\intoxt A_0\rho\bar{u}(S)(\psi_t+\nabla\psi u)dxd\tau,
\end{align*}
where $A_0$ is given by
\begin{align*}
A_0(x,t)=\frac{\bar{\rho}_0(X(x,0,t))}{\rho_0(X(x,0,t))}
\end{align*}
and we used the fact that $A_0\rho=(\bar{\rho}\circ S)|\det(\nabla S)|$ from Proposition~\ref{prop on S}. And by using the definition of $\bar{F}$ and $\omega$ from \eqref{def of F and omega} (replacing $F$ by $\bar{F}$, $u$ by $\bar{u}$, etc.), we can further rewrite the terms on the right side of \eqref{weak form for bar rho} as follows.
\begin{align*}
&\intoxt\left[(\bar{P_s}-\bar{P})\divv(\bar\psi)+\mu\nabla\bar{u}^j\cdot\nabla\bar{\psi}^j+\lambda\divv(\bar{u})\divv(\bar{\psi})\right]dxd\tau\\
&=\intoxt\left[(\mu+\lambda)\divv(\bar{u})-\bar{P}+\bar{P_s}\right]\divv(\bar{\psi})dxd\tau\\
&\qquad+\intoxt\mu(\bar{u}^j_{x_k}-\bar{u}^k_{x_j})\bar{\psi}^j_{x_k}dxd\tau\\
&=-\intoxt[\nabla(\bar{\rho}_s\bar{F})\cdot\psi+\mu\bar{\omega}^{j,k}_{x_k}\psi^j]dxd\tau\\
&\qquad+\int_0^T\intox \left[\nabla(\bar{\rho}_s\bar{F})\cdot(\psi-\psi\circ S^{-1})+\mu\bar{\omega}^{j,k}_{x_k}(\psi^j-\psi^j\circ S^{-1})\right]dxd\tau\\
&=\int_0^T\intox \left[(P_s-P)\divv(\psi)+\mu\nabla\bar{u}^j\cdot\nabla\psi^j+\lambda\divv(\bar{u})\divv(\psi)\right]dxd\tau\\
&\qquad+\int_0^T\intox \left[\nabla(\bar{\rho}_s\bar{F})\cdot(\psi-\psi\circ S^{-1})+\mu\bar{\omega}^{j,k}_{x_k}(\psi^j-\psi^j\circ S^{-1})\right]dxd\tau.
\end{align*}
Hence by taking the difference between \eqref{weak form for rho} and \eqref{weak form for bar rho}, for all $\psi$ and $\bar{\psi}$, we have the following expression on $\dis\intox (\bar{\rho}\bar{u}_0-\rho_0u_0)\cdot\psi(x,0)dx$:
\begin{align}\label{estimate on weak form for u and rho}
&\intox (\bar{\rho}\bar{u}_0-\rho_0u_0)\cdot\psi(x,0)dx\notag\\
&=\int_0^T\intox \left[\rho(u-\bar{u}\circ S)(\psi_t+\nabla\psi u)+(1-A_0)\rho(\bar{u}\circ S)(\psi_t+\nabla\psi u)\right]dxd\tau\\
&\qquad+\int_0^T\intox \left[(P_s-P)\divv(\psi)+\mu\nabla\bar{u}^j\cdot\nabla\psi^j+\lambda\divv(\bar{u})\divv(\psi)\right]dxd\tau\notag\\
&\qquad+\int_0^T\intox \left[(1-A_0)\rho(\bar{u}\circ S)(\psi_t+\nabla\psi u)\right]dxd\tau\notag\\
&\qquad+\int_0^T\intox \left[\nabla(\bar{\rho}_s\bar{F})\cdot(\psi-\psi\circ S^{-1})+\mu\bar{\omega}^{j,k}_{x_k}(\psi^j-\psi^j\circ S^{-1})\right]dxd\tau\notag\\
&\qquad+\int_0^T\intox (\bar{u}\circ S-\bar{u})(\mu\Delta\psi+\lambda\nabla\divv(\psi))dxd\tau\notag\\
&\qquad+\int_0^T\intox (P-\bar{P})\divv(\psi)+\int_0^T\intox (\bar{P_s}-P_s)\divv(\psi)dxd\tau\notag\\
&\qquad+\int_0^T\intox \left(\bar{\rho}_s(\bar{f}\circ S)\cdot\psi-\rho_s f\cdot\psi\right)dxd\tau\notag.
\end{align}
Next we extend $\rho$, $u$ to be constant in time $t$ outside $[0,T]$ and let $\rho^\varepsilon$ and $u^\varepsilon$ be the corresponding smooth approximation obtained by mollifying in both $x$ and $t$. Then we define $\psi^\varepsilon:\R^3\times[0,T]\to\R^3$ to be the solutions satisfying
\begin{align*}
\left\{ \begin{array}
{lr} \rho^\varepsilon(\psi^\varepsilon_t+u^\varepsilon\cdot\nabla\psi^\varepsilon)+\mu\Delta\psi^\varepsilon+\lambda\nabla\divv(\psi^\varepsilon)=G\\ 
\psi^\varepsilon(\cdot,\tau)=0.
\end{array} \right.
\end{align*}
By simple elliptic estimates (or refer to \cite[Lemma~3.1]{hoff06}), it can be shown that $\psi^\varepsilon$ satisfies the following bounds in terms of $G$:
\begin{align}\label{bound on psi 1}
&\sup_{0\le \tau\le T}\intox[|\psi^\varepsilon(x,\tau)|^2+|\nabla\psi^\varepsilon(x,\tau)|^2]dx\\
&\qquad+\int_0^T\intox[|\psi^\varepsilon_t+\nabla\psi^\varepsilon u^\varepsilon|^2+|D^2_x\psi^\varepsilon|^2]dxd\tau\le C\int_0^T\intox|G|^2dxd\tau\notag,
\end{align}
\begin{align}\label{bound on psi 2}
\sup_{0\le \tau\le T}\|\psi^\varepsilon(\cdot,\tau)\|_{L^\infty}+\int_0^T\intox|\psi^\varepsilon|^qdxd\tau\le C(G),
\end{align}
where $q>6$ is given by \eqref{condition on q and viscosity} and $C(G)$ is a positive constant which depends on $G$. We now take $\psi=\psi^\varepsilon$ in \eqref{estimate on weak form for u and rho} to obtain
\begin{align}\label{estimate on weak form for u and rho 2}
\intox (\bar{\rho}\bar{u}_0-\rho_0u_0)\cdot\psi^\varepsilon(x,0)dx=\intoxt z\cdot Gdxd\tau+\sum_{i=1}^7\mathcal{R}_i,
\end{align}
where $z=u-\bar{u}\circ S$ and $\mathcal{R}_1,\dots,\mathcal{R}_7$ are given by:
\begin{align*}\
&\mathcal{R}_1=\intoxt\left[\nabla(\bar{\rho}_s\bar{F})\cdot(\psi^\varepsilon-\psi^\varepsilon\circ S^{-1})+\mu\bar{\omega}^{j,k}_{x_k}(\psi^\varepsilon-\psi^\varepsilon\circ S^{-1})\right]dxd\tau,\\
&\mathcal{R}_2=\intoxt \left[\rho(f-\bar{f}\circ D)\cdot\psi^\varepsilon+(1-A_0)\rho(\bar{f}\circ S)\cdot\psi^\varepsilon\right]dxd\tau,\\
&\mathcal{R}_3=\intoxt(\bar{u}\circ S-\bar{u})\cdot(\mu\Delta\psi^\varepsilon+\lambda\divv(\psi^\varepsilon))dxd\tau,\\
&\mathcal{R}_4=\intoxt z\cdot \left[(\rho-\rho^\varepsilon)\psi^\varepsilon_t+\nabla\psi^\varepsilon(\rho u-\rho^\varepsilon u^\varepsilon)\right]dxd\tau,\\
&\mathcal{R}_5=\intoxt(1-A_0)\rho(\bar{u}\circ S)\cdot(\psi^\varepsilon_t+\nabla\psi^\varepsilon u)dxd\tau,\\
&\mathcal{R}_6=\intoxt(P-\bar{P})\divv(\psi^\varepsilon)+\int_0^T\intox (\bar{P_s}-P_s)\divv(\psi^\varepsilon)dxd\tau,\notag\\
&\mathcal{R}_7=\int_0^T\intox \left(\bar{\rho}_s(\bar{f}\circ S)\cdot\psi^\varepsilon-\rho_s f\cdot\psi^\varepsilon\right)dxd\tau\notag.
\end{align*}
The left side of \eqref{estimate on weak form for u and rho 2} can be readily bounded by 
\begin{align}\label{estimate of LHS}
\Big|\intox (\bar{\rho}\bar{u}_0-\rho_0u_0)\cdot\psi^\varepsilon(x,0)dx\Big|\le\|\rho_0u_0-\bar{\rho}_0\bar{u}_0\|_{L^2}\left(\intoxt|G|^2dxd\tau\right)^\frac{1}{2}.
\end{align}
We now aim at controlling the terms on the right side of \eqref{estimate on weak form for u and rho 2}. Following the method given in \cite{hoff06} and with the help of the bounds \eqref{bound on psi 1}-\eqref{bound on psi 2}, the terms $\mathcal{R}_2$, $\mathcal{R}_3$, $\mathcal{R}_4$ and $\mathcal{R}_5$ satisfy the following estimates:
\begin{align}\label{estimate on R2}
|\mathcal{R}_2|&\le C\left[\left(\intox|f-\bar{f}\circ S|^2dx\right)^\frac{1}{2}+\|\bar{f}\|_{L^{2\tilde q}}\|\rho_0-\bar{\rho}_0\|_{L^2\cap L^{2\tilde q'}}\right]\\
&\qquad\qquad\times\left(\intoxt|G|^2dxd\tau\right)^\frac{1}{2},\notag
\end{align}
\begin{align}\label{estimate on R3}
|\mathcal{R}_3|\le C\left(\intoxt|z|^2dxd\tau\right)^\frac{1}{2}\left(\intoxt|G|^2dxd\tau\right)^\frac{1}{2},
\end{align}
\begin{align}\label{estimate on R4}
\lim_{\varepsilon\to0}\mathcal{R}_4=\lim_{\varepsilon\to0}\intoxt z\cdot \left[(\rho-\rho^\varepsilon)\psi^\varepsilon_t+\nabla\psi^\varepsilon(\rho u-\rho^\varepsilon u^\varepsilon)\right]dxd\tau=0,
\end{align}
and
\begin{align}\label{estimate on R5}
|\mathcal{R}_5|\le C\left[\|\rho_0-\bar{\rho}_0\|_{L^2}+\left(\intoxt|z|^2dxd\tau\right)^\frac{1}{2}\right]\left(\intoxt|G|^2dxd\tau\right)^\frac{1}{2},
\end{align}
where we recall that $\tilde q$ and $\tilde q'$ are given in \eqref{condition on weak sol9}-\eqref{condition on weak sol10}. It remains to estimate the terms $\mathcal{R}_1$, $\mathcal{R}_6$ and $\mathcal{R}_7$. For $\mathcal{R}_1$, using H\"{o}lder inequality and the bound \eqref{bound on psi 1} on $\dis\intoxt|D^2_x\psi^\varepsilon|^2dxd\tau$, we have
\begin{align*}
|\mathcal{R}_1|&\le C\left(\intoxt|z|^2dxd\tau\right)^\frac{1}{2}\int_0^T \tau^\frac{1}{2}\|\nabla(\bar{\rho}_s\bar{F})(\cdot,\tau)\|_{L^4}\|\nabla\psi^\varepsilon(\cdot,\tau)\|_{L^4}d\tau\\
&\qquad+C\left(\intoxt|z|^2dxd\tau\right)^\frac{1}{2}\int_0^T \tau^\frac{1}{2}\|\nabla\bar{\omega}(\cdot,\tau)\|_{L^4}\|\nabla\psi^\varepsilon(\cdot,\tau)\|_{L^4}d\tau\\
&\le C\left(\intoxt|z|^2dxd\tau\right)^\frac{1}{2}\left(\intoxt|G|^2dxd\tau\right)^\frac{1}{8}\left(\intoxt|D^2_x\psi^\varepsilon|^2dxd\tau\right)^\frac{3}{8}\\
&\qquad\times\left(\int_0^T \tau^\frac{4}{5}(\|\nabla(\bar{\rho}_s\bar{F})(\cdot,\tau)\|^\frac{8}{5}_{L^4}+\|\nabla\bar{\omega}(\cdot,\tau)\|^\frac{8}{5}_{L^4})d\tau\right)^\frac{5}{8}\\
&\le C\left(\intoxt|z|^2dxd\tau\right)^\frac{1}{2}\left(\intoxt|G|^2dxd\tau\right)^\frac{1}{2}\\
&\qquad\times\left(\int_0^T \tau^\frac{4}{5}(\|\nabla(\bar{\rho}_s\bar{F})(\cdot,\tau)\|^\frac{8}{5}_{L^4}+\|\nabla\bar{\omega}(\cdot,\tau)\|^\frac{8}{5}_{L^4})d\tau\right)^\frac{5}{8}.
\end{align*}
Using \eqref{bound on F and omega in terms of u} and the boundedness assumption \eqref{condition on weak sol6} on $\bar{\rho}_s$, the term involving $\bar{F}$ and $\bar{\omega}$ can be bounded by $\dis\left(\int_0^T \tau^\frac{4}{5}\left(\intox|\dot{\bar{u}}(x,\tau)|^4dx\right)^\frac{3}{5}d\tau\right)^\frac{5}{8}$, and with the help of \eqref{GN1} and the energy estimates \eqref{energy bound on weak sol}, we further have
\begin{align*}
&\int_0^T \tau^\frac{4}{5}\left(\intox|\dot{\bar{u}}|^4dx\right)^\frac{3}{5}d\tau\\
&\le C\int_0^T \tau^\frac{4}{5}\left(\intox|\dot{\bar{u}}|^2dx\right)^\frac{1}{5}\left(\intox|\nabla\dot{\bar{u}}|^2dx\right)^\frac{3}{5}d\tau\\
&\le C\left(\int_0^T \tau^{4\alpha-3}d\tau\right)^\frac{1}{5}\left(\int_0^T \tau^{1-\alpha}\intox|\dot{\bar{u}}|^2dxd\tau\right)^\frac{1}{5}\left(\int_0^T \tau^{2-\alpha}\intox|\nabla\dot{\bar{u}}|^2dxd\tau\right)^\frac{1}{5}\\
&\le CT^\frac{4\alpha-2}{5},
\end{align*}
and we recall that $\alpha\in(\frac{1}{2},1]$. Hence we conclude
\begin{align}\label{rough bound on R1}
|\mathcal{R}_1|\le CT^\frac{2\alpha-1}{4}\left(\intoxt|z|^2dxd\tau\right)^\frac{1}{2}\left(\intoxt|G|^2dxd\tau\right)^\frac{1}{2}.
\end{align}
In particular, for $[t_1,t_2]\subseteq[0,T]$, if we define
\begin{equation*}
\mathcal{R}_1(t_1,t_2)=\int_{t_1}^{t_2}\intox\left[\nabla(\bar{\rho}_s\bar{F})\cdot(\psi^\varepsilon-\psi^\varepsilon\circ S^{-1})+\mu\bar{\omega}^{j,k}_{x_k}(\psi^\varepsilon-\psi^\varepsilon\circ S^{-1})\right]dxd\tau,
\end{equation*}
then we also have
\begin{equation}\label{estimate on R1}
|\mathcal{R}_1(t_1,t_2)|\le C|t_2-t_1|^\frac{2\alpha-1}{4}\left(\intoxts|z|^2dxd\tau\Big)^\frac{1}{2}\Big(\intoxts|G|^2dxd\tau\right)^\frac{1}{2}.
\end{equation}

For $\mathcal{R}_6$, we define the function $\chi(x,t)$ by
\begin{align*}
\chi(x,t)=\frac{P(\rho(x,t))-P(\bar{\rho}(x,t))}{\rho(x,t)-\bar{\rho}(x,t)},
\end{align*}
then by the mean value theorem, we can further rewrite $\chi(x,t)$ as
\begin{align*}
\chi(x,t)=\int_0^1 P'(\rho(x,t)+\theta(\bar{\rho}(x,t)-\rho(x,t)))d\theta.
\end{align*}
For $\psi\in\mathcal{D}(\R^3)$ and $t\ge0$, 
\begin{align*}
\Big|\intox[P(\rho)-P(\bar{\rho}]\psi (x,\tau)dx\Big|&=\Big|\intox\chi(\rho-\bar{\rho})\psi (x,\tau)dx\Big|\\
&\le C\|(\rho-\bar{\rho})(\cdot,\tau)\|_{H^{-1}}(\|\psi\|_{H^1}+\|\psi\nabla\chi\|_{L^2}).
\end{align*}
If $P(\rho)=a\rho$, then it is clear that $\chi$ is a constant and $\nabla\chi=0$. For more general pressure $P$ satisfying \eqref{more general condition on P}, notice that for $r\in[3,\infty]$, if we take $p=2+\frac{4}{r-2}$, then $p\in[2,6]$ with $r=\frac{2p}{p-2}$. Hence by the Sobolev imbedding \eqref{GN1} and the boundedness condition \eqref{more general condition on P} on $\nabla\chi$,
\begin{align*}
\|\psi\nabla\chi\|_{L^2}&\le \|\psi\|_{L^p}\Big(\intox|\nabla\chi|^\frac{2p}{p-2}dx\Big)^\frac{p-2}{2p}\\
&\le \|\psi\|_{H^1}\|\nabla\chi\|_{L^r}\le C\|\psi\|_{H^1}.
\end{align*}
Therefore in either cases, we have
\begin{align}\label{H -1 bound on P}
\|(P-\bar{P})(\cdot,\tau)\|_{H^{-1}}\le C\|(\rho-\bar{\rho})(\cdot,\tau)\|_{H^{-1}}.
\end{align}
Hence together with the bound \eqref{bound on psi 2} on $\psi^\varepsilon$, it implies that
\begin{align}\label{bound on R6 1}
&\Big|\intoxt(P-\bar{P})\divv(\psi^\varepsilon)dxd\tau\Big|\\
&\qquad\le\int_0^T\|(P-\bar{P})(\cdot,\tau)\|_{H^{-1}}\|\divv(\psi^\varepsilon)\|_{H^1}d\tau\notag\\
&\qquad\le C\sup_{0\le \tau\le T}\|(\rho-\bar{\rho})(\cdot,\tau)\|_{H^{-1}}\left(\intoxt|G|^2dxd\tau\right)^\frac{1}{2}.\notag
\end{align}
Following the argument given in \cite{hoff06}, the term $\dis\sup_{0\le \tau\le T}\|(\rho-\bar{\rho})(\cdot,\tau)\|_{H^{-1}}$ can be bounded by
\begin{align*}
\sup_{0\le \tau\le T}\|(\rho-\bar{\rho})(\cdot,\tau)\|_{H^{-1}}\le C\left[\|\rho_0-\bar{\rho}_0\|_{L^2}+T^\frac{1}{2}\left(\intoxt|z|^2dxd\tau\right)^\frac{1}{2}\right],
\end{align*}
and we conclude from \eqref{bound on R6 1} that
\begin{align}\label{bound on R6 2}
&\Big|\intoxt(P-\bar{P})\divv(\psi^\varepsilon)dxd\tau\Big|\\
&\le C\left[\|\rho_0-\bar{\rho}_0\|_{L^2}+\left(\intoxt|z|^2dxd\tau\right)^\frac{1}{2}\right]\left(\intoxt|G|^2dxd\tau\right)^\frac{1}{2}.\notag
\end{align}
On the other hand, the term $\dis\int_0^T\intox (\bar{P_s}-P_s)\divv(\psi^\varepsilon)dxd\tau$ can be readily bounded by
\begin{align}\label{bound on R6 3}
\Big|\int_0^T\intox (\bar{P_s}-P_s)\divv(\psi^\varepsilon)dxd\tau\Big|\le C\left(\intox|\rho_s-\bar{\rho}_s|^2dxd\tau\right)^\frac{1}{2},
\end{align}
and therefore the bounds \eqref{bound on R6 2}-\eqref{bound on R6 3} together imply
\begin{align}\label{estimate on R6}
|\mathcal{R}_6|&\le C\left[\|\rho_0-\bar{\rho}_0\|_{L^2}+\left(\intoxt|z|^2dxd\tau\right)^\frac{1}{2}\right]\left(\intoxt|G|^2dxd\tau\right)^\frac{1}{2}\\
&\qquad+\left(\intox|\rho_s-\bar{\rho}_s|^2dxd\tau\right)^\frac{1}{2}.\notag
\end{align}
Finally for the term $\mathcal{R}_7$, we can rewrite it as follows.
\begin{align*}
\mathcal{R}_7=\intoxt(\bar{\rho}_s-\rho_s)f\cdot\psi^\varepsilon dxd\tau+\intoxt\bar{\rho}_s(\bar{f}\circ S-f)\cdot\psi^\varepsilon dxd\tau.
\end{align*}
Using \eqref{bound on psi 1}, the term $\dis\intoxt(\bar{\rho}_s-\rho_s)f\cdot\psi^\varepsilon dxd\tau$ can be bounded by
\begin{align*}
&\Big|\intoxt(\bar{\rho}_s-\rho_s)f\cdot\psi^\varepsilon dxd\tau\Big|\\
&\qquad\le C\|f\|_{L^\infty}\left(\intox|\rho_s-\bar{\rho}_s|^2dx\right)^\frac{1}{2}\left(\intoxt|\psi^\varepsilon|^2dxd\tau\right)^\frac{1}{2}\\
&\qquad\le C\|f\|_{L^\infty}\left(\intox|\rho_s-\bar{\rho}_s|^2dx\right)^\frac{1}{2}\left(\intoxt|G|^2dxd\tau\right)^\frac{1}{2},
\end{align*}
and similarly, $\dis\intoxt\bar{\rho}_s(\bar{f}\circ S-f)\cdot\psi^\varepsilon dxd\tau$ can be bounded by
\begin{align*}
&\intoxt\bar{\rho}_s(\bar{f}\circ S-f)\cdot\psi^\varepsilon dxd\tau\\
&\le C\|\bar{\rho}_s\|_{L^\infty}\left(\int_0^T\intox|f-\bar{f}\circ S|^2dxd\tau\right)^\frac{1}{2}\left(\intoxt|G|^2dxd\tau\right)^\frac{1}{2}.
\end{align*}
Recalling the assumptions \eqref{condition on weak sol3} and \eqref{condition on weak sol6}, we therefore obtain
\begin{align}\label{estimate on R7}
|\mathcal{R}_7|&\le C\left(\intox|\rho_s-\bar{\rho}_s|^2dx\right)^\frac{1}{2}\left(\intoxt|G|^2dxd\tau\right)^\frac{1}{2}\\
&\qquad+C\left(\int_0^T\intox|f-\bar{f}\circ S|^2dxd\tau\right)^\frac{1}{2}\left(\intoxt|G|^2dxd\tau\right)^\frac{1}{2}.\notag
\end{align}
Combining the estimates \eqref{estimate of LHS}, \eqref{estimate on R2}, \eqref{estimate on R3}, \eqref{estimate on R4}, \eqref{estimate on R5}, \eqref{estimate on R1}, \eqref{estimate on R6} and \eqref{estimate on R7}, we arrive at
\begin{align}\label{estimate on z in T}
\Big|\intoxt z\cdot Gdxd\tau\Big|\le C\Big[M_0\Big(\intoxt|G|^2dxd\tau\Big)^\frac{1}{2}+|\mathcal{R}_1(0,T)|\Big],
\end{align}
where $M_0$ is given by
\begin{align*}
M_0&=\|\rho_0-\bar{\rho}_0\|_{L^2\cap L^{2\tilde q'}}+\|\rho_0u_0-\bar{\rho}_0\bar{u}_0\|_{L^2}+T^{\delta}\Big(\intoxt|z|^2dxd\tau\Big)^\frac{1}{2}\\
&\qquad+\left(\intox|\rho_s-\bar{\rho}_s|^2dx\right)^\frac{1}{2}+\left(\int_0^T\intox|f-\bar{f}\circ S|^2dxd\tau\right)^\frac{1}{2}
\end{align*}
for some $\delta>0$, and $C>0$ is now fixed. Following the analysis given in \cite[pp. 1758--1759]{hoff06}, there exists a small time $\tilde\tau>0$ such that 
\begin{equation*}
\Big(\int_0^{\tilde\tau}\intox|z|^2dxd\tau\Big)^\frac{1}{2}\le 2CM_0,
\end{equation*}
and consequently
\begin{equation*}
|\mathcal{R}_1(0,\tilde\tau)|\le M_0\Big(\int_0^{\tilde\tau}\intox|G|^2dxd\tau\Big)^\frac{1}{2}.
\end{equation*}
By applying \eqref{estimate on z in T} with $T$ replaced by $2\tilde\tau$, we get
\begin{equation*}
\Big(\int_0^{2\tilde\tau}\intox|z|^2dxd\tau\Big)^\frac{1}{2}\le 4CM_0.
\end{equation*}
Since $\tilde\tau>0$ is fixed, we can exhaust the interval $[0,T]$ in finitely many steps to obtain that
\begin{equation*}
\Big(\intoxt|z|^2dxd\tau\Big)^\frac{1}{2}\le CM_0,
\end{equation*}
for some new constant $C>0$. Hence the term $\dis T^{\delta}\Big(\intoxt|z|^2dxd\tau\Big)^\frac{1}{2}$ can be eliminated from the definition of $M_0$ by a Gronw\"{a}ll-type argument. Therefore we conclude that
\begin{align}
\Big|\intoxt z\cdot Gdxd\tau\Big|&\le CM_0\Big(\intoxt|G|^2dxd\tau\Big)^\frac{1}{2}.\label{L2 bound on u uniqueness}
\end{align}
Since the bound \eqref{L2 bound on u uniqueness} holds for any $G\in H^\infty(\R^3\times[0,T])$, it shows that the term $\|z\|_{L^2([0,T]\times\R^3)}$ can be bounded by $M_0$. Finally, using the bound \eqref{spaces for weak sol 1} on the time integral on $\|\nabla\bar{u}\|_{L^\infty}$, we have
\begin{align}\label{bound on bar u and bar u S}
\intoxt|\bar{u}-\bar{u}\circ S|^2dxd\tau&\le\int_0^T\|\nabla\bar{u}(\cdot,\tau)\|^2_{L^\infty}\intox|x-S(x,\tau)|^2dxd\tau\\
&\le C\intoxt|z|^2dxd\tau,\notag
\end{align}
and hence \eqref{bound on difference} follows. This completes the proof of Theorem~\ref{Main thm}.
\begin{rem}\label{explanation on f}
If we further assume that $\|\nabla\bar{f}\|_{L^\infty}<\infty$, then it gives
\begin{align*}
&\left(\int_0^T\intox|f-\bar{f}\circ S|^2dxd\tau\right)^\frac{1}{2}\\
&\le C\left(\int_0^T\intox|\bar{f}-\bar{f}\circ S|^2dxd\tau\right)^\frac{1}{2}+CT^\frac{1}{2}\left(\intox|f-\bar{f}|^2dx\right)^\frac{1}{2}\\
&\le C\|\nabla\bar{f}\|_{L^\infty}\left(\int_0^T\intox|x-S|^2dxd\tau\right)^\frac{1}{2}+CT^\frac{1}{2}\left(\intox|f-\bar{f}|^2dx\right)^\frac{1}{2}.
\end{align*}
Using \eqref{bound on S} and \eqref{bound on bar u and bar u S}, we have
\begin{align*}
C\|\nabla\bar{f}\|_{L^\infty}\left(\int_0^T\intox|x-S|^2dxd\tau\right)^\frac{1}{2}\le C\|\nabla\bar{f}\|_{L^\infty}\Big(\intoxt|z|^2dxd\tau\Big)^\frac{1}{2},
\end{align*}
hence we can replace the difference $f-\bar{f}\circ S$ by $f-\bar{f}$ in \eqref{bound on difference} provided that $\|\nabla\bar{f}\|_{L^\infty}<\infty$.
\end{rem}



\bibliographystyle{amsalpha}

\begin{thebibliography}{Sue20d}

\bibitem[BC94]{BC94}
H.~Bahouri and J.-Y. Chemin, \emph{\'{E}quations de transport relatives \'{a}
  des champs de vecteurs non-lipschitziens et m\'{e}canique des fluides}, Arch.
  Rational Mech. Anal. \textbf{127} (1994), no.~2, 159--181.

\bibitem[CS16]{suen16}
Ka~Luen Cheung and Anthony Suen, \emph{Existence and uniqueness of small energy
  weak solution to multi-dimensional compressible {N}avier-{S}tokes equations
  with large external potential force}, J. Math. Phys. \textbf{57} (2016),
  no.~8, 081513, 19.

\bibitem[Dan00]{danchin}
R.~Danchin, \emph{Global existence in critical spaces for compressible
  {N}avier-{S}tokes equations}, Invent. Math. \textbf{141} (2000), no.~3,
  579--614.

\bibitem[Hof95]{hoff95}
David Hoff, \emph{Global solutions of the {N}avier-{S}tokes equations for
  multidimensional compressible flow with discontinuous initial data}, J.
  Differential Equations \textbf{120} (1995), no.~1, 215--254.

\bibitem[Hof02]{hoff02}
David Hoff, \emph{Dynamics of singularity surfaces for compressible, viscous flows
  in two space dimensions}, Comm. Pure Appl. Math. \textbf{55} (2002), no.~11,
  1365--1407.

\bibitem[Hof05]{hoff05}
David Hoff, \emph{Compressible flow in a half-space with {N}avier boundary
  conditions}, J. Math. Fluid Mech. \textbf{7} (2005), no.~3, 315--338.

\bibitem[Hof06]{hoff06}
David Hoff, \emph{Uniqueness of weak solutions of the {N}avier-{S}tokes equations
  of multidimensional, compressible flow}, SIAM J. Math. Anal. \textbf{37}
  (2006), no.~6, 1742--1760.

\bibitem[HW20]{HW20}
Xianpeng Hu and Guochun Wu, \emph{Optimal decay rates of isentropic
  compressible {N}avier-{S}tokes equations with discontinuous initial data}, J.
  Differential Equations \textbf{269} (2020), no.~10, 8132--8172.

\bibitem[Lio98]{lions}
Pierre-Louis Lions, \emph{Mathematical topics in fluid mechanics. {V}ol. 2},
  Oxford Lecture Series in Mathematics and its Applications, vol.~10, The
  Clarendon Press, Oxford University Press, New York, 1998, Compressible
  models, Oxford Science Publications.

\bibitem[LM11]{LM11}
Jing Li and Akitaka Matsumura, \emph{On the {N}avier-{S}tokes equations for
  three-dimensional compressible barotropic flow subject to large external
  potential forces with discontinuous initial data}, J. Math. Pures Appl. (9)
  \textbf{95} (2011), no.~5, 495--512.

\bibitem[MN80]{mn1}
Akitaka Matsumura and Takaaki Nishida, \emph{The initial value problem for the
  equations of motion of viscous and heat-conductive gases}, J. Math. Kyoto
  Univ. \textbf{20} (1980), no.~1, 67--104.

\bibitem[MY01]{MY01}
Akitaka Matsumura and Naruhiko Yamagata, \emph{Global weak solutions of the
  {N}avier-{S}tokes equations for multidimensional compressible flow subject to
  large external potential forces}, Osaka J. Math. \textbf{38} (2001), no.~2,
  399--418.

\bibitem[Nas62]{nash}
John Nash, \emph{Le probl\`eme de {C}auchy pour les \'{e}quations
  diff\'{e}rentielles d'un fluide g\'{e}n\'{e}ral}, Bull. Soc. Math. France
  \textbf{90} (1962), 487--497.

\bibitem[SH12]{suenhoff12}
Anthony Suen and David Hoff, \emph{Global low-energy weak solutions of the
  equations of three-dimensional compressible magnetohydrodynamics}, Arch.
  Ration. Mech. Anal. \textbf{205} (2012), no.~1, 27--58. 

\bibitem[Sue12]{suen12}
Anthony Suen, \emph{Global low-energy weak solutions of the equations
  of 3{D} magnetohydrodynamics}, ProQuest LLC, Ann Arbor, MI, 2012, Thesis
  (Ph.D.)--Indiana University. 

\bibitem[Sue13]{suen13b}
Anthony Suen, \emph{Global solutions of the {N}avier-{S}tokes equations for
  isentropic flow with large external potential force}, Z. Angew. Math. Phys.
  \textbf{64} (2013), no.~3, 767--784.

\bibitem[Sue14]{suen14}
Anthony Suen, \emph{Global existence of weak solution to {N}avier-{S}tokes equations
  with large external force and general pressure}, Math. Methods Appl. Sci.
  \textbf{37} (2014), no.~17, 2716--2727. 

\bibitem[Sue20a]{suen20a}
Anthony Suen, \emph{Existence and a blow-up criterion of solution to the 3{D}
  compressible {N}avier-{S}tokes-{P}oisson equations with finite energy},
  Discrete Contin. Dyn. Syst. \textbf{40} (2020), no.~3, 1775--1798.

\bibitem[Sue20b]{suen20b}
Anthony Suen, \emph{Existence and uniqueness of low-energy weak solutions to the
  compressible 3{D} magnetohydrodynamics equations}, J. Differential Equations
  \textbf{268} (2020), no.~6, 2622--2671.

\bibitem[Sue20c]{suen20mhd}
Anthony Suen, \emph{Global regularity for the 3D compressible
  magnetohydrodynamics with general pressure}, arXiv preprint arXiv:2012.02971.

\bibitem[Sue20d]{suen20c}
Anthony Suen, \emph{Some serrin type blow-up criteria for the three-dimensional
  viscous compressible flows with large external potential force}, arXiv preprint arXiv:2011.14581.

\bibitem[Tan77]{tani}
Atusi Tani, \emph{On the first initial-boundary value problem of compressible
  viscous fluid motion}, Publications of the Research Institute for
  Mathematical Sciences \textbf{13} (1977), no.~1, 193--253.

\bibitem[Zie89]{ziemer}
William~P. Ziemer, \emph{Weakly differentiable functions}, Graduate Texts in
  Mathematics, vol. 120, Springer-Verlag, New York, 1989, Sobolev spaces and
  functions of bounded variation.

\end{thebibliography}

\end{document}